\newtheorem{theorem} {Theorem}[section]
\newtheorem{lemma}[theorem]{Lemma}
\newtheorem{proposition}[theorem]{Proposition}
\newtheorem{remark}[theorem]{Remark}
\newtheorem{definition}[theorem]{Definition}
\numberwithin{equation}{section}
\def\N{{\mathbb N}}
\def\R{{\mathbb R}}
\def\pmb#1{\setbox0=\hbox{#1}%
\kern-.025em\copy0\kern-\wd0
\kern.05em\copy0\kern-\wd0
\kern-.025em\raise.0433em\box0
}
    \def\e{\varepsilon}
\def\CC{\subset\kern-2pt\subset}
\def\qual{{\bf \Bigg[\kern-4.5pt\Bigg[}}
\def\quar{{\bf \Bigg]\kern-4.5pt\Bigg]}}
\def\quals{{\bf \big[\kern-5pt\big[}}
\def\quars{{\bf \big]\kern-5pt\big]}}
\def\qualm{{\bf \bigg[\kern-4pt\bigg[}}
\def\quarm{{\bf \bigg]\kern-4pt\bigg]}}
\def\sbh {S\kern-2pt B\kern-2pt H}  
\def\sbv {S\kern-2pt B\kern-1pt V}  
\def\gsbv {G\kern-1pt S\kern-1.5pt BV}
\newcommand{\as}{{\mathcal A}}
\newcommand{\hs}{{\mathcal H}}
\newcommand{\fs}{{\mathcal F}}
\newcommand{\leb}{{\mathcal L}}
\newcommand{\ms}{{\mathcal M}}
\newcommand{\Ss}{{\mathcal S}}
\newcommand{\hn}{{\mathcal{H}}^{N-1}}
\newcommand{\Om}{\Omega}
\newcommand{\Omb}{\overline{\Omega}}
\newcommand{\weak}{\rightharpoonup}
\newcommand{\Prob}[2]{\widetilde{Per}(#1, #2)}
\newcommand{\tsub}{\;\widetilde{\subseteq}\;}
\newcommand{\teq}{\;\widetilde{=}\;}
\newcommand{\lkb}{\tilde \lambda_{k,\beta}}
\definecolor{verde}{RGB}{20,150,100}
\begin{document}
\title[Minimization of the Robin eigenvalues with perimeter constraint]{Minimization of the $k$-th eigenvalue of the Robin-Laplacian with perimeter constraint}
\author[S. Cito]{Simone Cito}
\address[Simone Cito]{Dipartimento di Matematica e Fisica ``E. De Giorgi'', Universit\`a del Salento, Via per Arnesano, 73100 Lecce, Italy.}
\email[S. Cito]{simone.cito@unisalento.it}

\author[A. Giacomini]
{Alessandro Giacomini}
\address[Alessandro Giacomini]{DICATAM, Sezione di Matematica, Universit\`a degli Studi di Brescia, Via Branze 43, 25133 Brescia, Italy}
\email[A. Giacomini]{alessandro.giacomini@unibs.it}

\begin{abstract}
In this paper we address the problem of the minimization of the $k$-th Robin eigenvalue $\lambda_{k,\beta}$ with parameter $\beta>0$ among bounded open Lipschitz sets with prescribed perimeter.
The perimeter constraint allows us to naturally generalize the problem to a setting involving more general admissible geometries made up of sets of finite perimeter with inner cracks, along with a suitable generalization of the Robin-Laplacian operator with properties which look very similar to those of the classical setting. Within this extended framework we establish existence of minimizers, and prove that the associated eigenvalue coincides with the infimum of those achieved by regular domains.
\vskip10pt\noindent  \textsc{Keywords}: Robin-Laplacian eigenvalues, shape optimization, sets of finite perimeter, functions of bounded variations. 
\vskip10pt\noindent  \textsc{2020 Mathematics Subject Classification}: 49J35, 49J45, 26A45, 35R35, 35J20, 28A75.
\end{abstract}
\maketitle
\tableofcontents

\section{Introduction}
Given $\Om\subset\R^N$ open, bounded and with a Lipschitz boundary, $\lambda\in\R$ is said to be an eigenvalue of the Laplace operator under Robin (or Fourier) boundary conditions with constant $\beta>0$ if there exists a nontrivial $u\in W^{1,2}(\Om)$ such that
$$
\begin{cases}
-\Delta u=\lambda u&\text{in }\Om\\
\frac{\partial u}{\partial \nu}+\beta u=0&\text{on }\partial \Om,
\end{cases}
$$
which in the weak sense means
\begin{equation}
\label{eq:eig-intr}
\forall \varphi \in W^{1,2}(\Om)\,:\, \int_\Om \nabla u\cdot \nabla \varphi\,dx+\beta\int_{\partial \Om}u\varphi\,d\hn=\lambda\int_\Om u\varphi\,dx.
\end{equation}
Here $\nu$ denotes the outer normal to $\partial\Om$, while $\hn$ stands for the Hausdorff $(N-1)$-dimensional measure on $\R^N$, which coincides with the usual area measure on regular hypersurfaces.
\par
It is known that $\Om$ admits a positively diverging sequence of eigenvalues $0<\lambda_{1,\beta}(\Om)\le\lambda_{2,\beta}(\Om)\le \dots \le \lambda_{k,\beta}(\Om)\le \dots \to +\infty$,  which are given (counting multiplicity) by the min-max formula
\begin{equation}
\label{eq:minmaxintr}
\lambda_{k,\beta}(\Om)=\min_{V \in \Ss_k} \max_{u\in V, u\not=0} \frac{\int_\Om |\nabla u|^2\,dx+\beta\int_{\partial \Om}u^2\,d\hn}{\int_\Om u^2\,dx},
\end{equation}
where $\Ss_k$ denotes the family of vectorial subspaces of $W^{1,2}(\Om)$ with dimension $k$. The quantity appearing in \eqref{eq:minmaxintr} is the so called {\it Rayleigh quotient} $R_\beta$, and it involves a boundary term. 
\par
Shape optimization problems involving Robin eigenvalues have been widely studied in recent years (see for example \cite[Chapter 4]{Henrot} for an overview): the main difference with respect to the much more studied problems involving Dirichlet eigenvalues is due to the fact that $\lambda_{k,\beta}$ fails to be monotone under inclusion and does not enjoy simple rescaling properties, essentially because of the presence of the boundary integral in the Rayleigh quotient.
\par
The minimization of the first eigenvalue under a measure constraint leads to the so called Faber-Krahn inequality for the Robin-Laplacian: minimizers are balls, and this has been established only quite recently by Bossel \cite{Bossel1} in 1986 for two dimensional smooth domains, and by Daners \cite{Da} in 2006 for general $N$ dimensional Lipschitz domains. In the case $k=2$, Kennedy \cite{Kennedy} proved that optimal domains are the union of two congruent balls. For $k\ge 3$ the problem is open, and some advances have been achieved by Bucur and the second author in \cite{BucGiac-lambdak} employing techniques from {\it free discontinuity problems} which are described below. 
\par
In this paper we address the problem of minimizing $\lambda_{k,\beta}$ under a {\it perimeter constraint}, namely we study
\begin{equation}\label{eq:pbintr}
\min\left\{\lambda_{k,\beta}(\Omega):\Omega\subset\mathbb{R}^N\ \text{is a bounded Lipschitz domain with } \hn(\partial\Omega)=p \right\}
\end{equation}
where $p>0$. The existence and regularity of minimizers for problem \eqref{eq:pbintr} in the class of convex domains have been studied by the first author in \cite{cit18}. Allowing more general geometries, existence of minimizers is open as long as $k\ge 2$ (for $k=1$ minimizers are still balls). Our aim is to generalize problem \eqref{eq:pbintr} to a larger class of geometries in order to gain existence of optimal domains. The presence of the perimeter allows us to deal with the problem in a more geometrical way with respect to \cite{BucGiac-lambdak}, still employing free discontinuity arguments but in a much more simplified way (also in the Dirichlet case, the perimeter has a ``regularizing'' effect on the problem: under a measure constraint, existence is available within the class of {\it quasi-open} sets, and optimal domains are known to be bounded and of finite perimeter (see \cite{Buc-k}), while under a perimeter constraint De Philippis and Velichkov \cite{de2014existence} proved that optimal domains are open and with a fairly smooth boundary).

\vskip10pt
The issue of minimizing $\lambda_{k,\beta}$ for $k\ge 3$ under a measure constraint has been addressed by Bucur and the second author in \cite{BucGiac-lambdak}, by generalizing the {\it free discontinuity approach} developed in \cite{BucGiac} and \cite{bugi2015} (see also \cite{BuGiVenant} and \cite{BGT2019}) to deal with the optimization of the first eigenvalue
and associated variants (for example the {\it torsion} of the domain, leading to the so called de Saint-Venant inequality).
\par
Roughly speaking, in the case of the first eigenvalue, the free discontinuity approach can be summarized as follows. One replaces the dependence on a domain $\Omega$ with the dependence on a function $u$ belonging to a suitable class of {\it functions of bounded variation}, whose support will be identified with $\Omega$ and whose jump set will play the role of $\partial\Omega$. The {\it Rayleigh quotient} gives rise to a free discontinuity functional for $u$ of the form
\begin{equation}
\label{eq:ray-u}
R_\beta(u):=\frac{\int_{\R^N} |\nabla u|^2\,dx+\beta\int_{J_u}[\gamma_l^2(u)+\gamma_r^2(u)]\,d\hn}{\int_{\R^N} u^2\,dx},
\end{equation}
where $\gamma_l(u)$ and $\gamma_r(u)$ stands for the traces of $u$ on both sides of its jump set $J_u$, with respect to a given orientation. The minimization of $\lambda_{1,\beta}(\Om)$ is replaced by the minimization of $R_\beta(u)$ under a measure constraint for the support $supp(u)$. Existence of minimizers follows by using compactness and lower semicontinuity properties of free discontinuity functionals: the Faber-Krahn inequality is established (and its validity extends to more general geometries) by showing that minimizers $u$ have support equal to a ball.
\par 
For higher order eigenvalues, the main idea of \cite{BucGiac-lambdak} is to reinterpret the min-max characterization \eqref{eq:minmaxintr} in the free discontinuity setting, by replacing the $k$-dimensional subspaces $V_k$ with vector valued functions $u$ whose components are linearly independent and belong again to a suitable class of vector valued functions of bounded variation $\fs_k$. The problem is generalized to the minimization of $\max R_\beta$ in $\fs_k$ under a measure constraint, with $R_\beta$ given in \eqref{eq:ray-u}, and the max being computed on the space generated by the components of the functions. The intuitive idea behind the approach is that given a minimizer $u$, its support should provide the optimal domain $\Om$, the space generated by the components being the optimal subspace in the min-max characterization \eqref{eq:minmaxintr}. The generalized problem on $\fs_k$ can be seen as a {\it relaxation} of the original one in the following sense: the minimum value turns out to be the infimum of $\lambda_{k,\beta}$ in the class of Lipschitz domains. The same approach has been used by Nahon in \cite{Na} to deal with the minimization of more general functionals whose prototype is $\Om \mapsto \lambda_{1,\beta}(\Om)+\dots+\lambda_{k,\beta}(\Om)$ under a measure constraint: in this case, the supports of minimizers turn out to be open sets with boundary of finite $\hn$-measure.
\vskip10pt
In the present paper, as mentioned above, we deal with the optimization problem \eqref{eq:pbintr} still employing  free discontinuity arguments as in \cite{BucGiac-lambdak}, but the perimeter constraint permits us to  reformulate the problem in a setting which retains a more geometrical flavor. This is because the constraint suggests  that the class of sets of {\it finite perimeter}, which enjoy strong compactness and structural properties, should be naturally involved.
\par
In Section \ref{sec:adm} we generalize the Robin-Laplacian eigenvalue problem to geometries $(\Om,\Gamma)$, where $\Om\subset\R^N$ is a set of finite perimeter and finite volume, while $\Gamma\subset \Om^1$ is a ($\hn$-countably) rectifiable set  with finite $\hn$-measure: here $\Om^1$ stands for the family of points of density $1$ for $\Om$. The idea behind the choice of these geometries is that open bounded domains are naturally replaced by sets of finite perimeter. 
Moreover being interested in variational problems, letting thus the domains vary, it is natural to keep into account possible ``inner'' boundaries, which may arise as a degeneration of inner holes, or by a folding of outer boundaries: the rectifiable set $\Gamma$, which can be seen as a {\it crack} inside $\Om$, is introduced precisely for this purpose. 
\par
In order to extend the Robin-Laplacian boundary value problem to these irregular geometries, we generalize some ideas introduced in \cite{bgt2} in the context of two dimensional open sets with rectifiable topological boundary. In particular we replace the usual Sobolev space $W^{1,2}$ with
\begin{multline*}
\Theta(\Omega,\Gamma):=\Bigg\{u\in SBV(\R^N): u=0\ \text{a.e. in $\Omega^c$},  \nabla u\in L^2(\R^N), J_u\subseteq\partial^*\Omega\cup\Gamma\ \text{up to }\\
\text{ $\hn$-negligible sets, and }\int_{\partial^*\Omega\cup\Gamma}\left[\gamma_l(u)^2+\gamma_r(u)^2\right]\:d\hn<+\infty \Bigg\},
\end{multline*}
where $\partial^*\Om$ denotes the reduced boundary of $\Om$, $SBV(\R^N)$ is the space of {\it special functions of bounded variation} in $\R^N$, and $\gamma_l(u),\gamma_r(u)$ stand for the traces of $u$ from both sides of the rectfiable set $\partial^*\Omega\cup\Gamma$ with respect to (any) given orientation (see Sections \ref{sec:bv} and \ref{sec:per}). 
\par
The weak formulation of \eqref{eq:eig-intr} for the eigenvalue problem is generalized to (see Sections \ref{sec:gen-rob} and \ref{sec:gen-lbk})
\begin{equation}
\label{eq:eig-intr2}
\forall \varphi\in \Theta(\Om,\Gamma)\,:\,  \int_\Om \nabla u\cdot \nabla \varphi\,dx+\beta\int_{\partial^* \Om\cup \Gamma}[\gamma_l(u)\gamma_l(\varphi)+\gamma_r(u)\gamma_r(\varphi)]\,d\hn=\tilde \lambda\int_\Om u\varphi\,dx.
\end{equation}
We prove that $\Theta(\Om,\Gamma)$ can be endowed with a complete scalar product, which permits to deal with \eqref{eq:eig-intr2} using classical arguments coming from the Hilbert space approach to boundary value problems. In particular it is guaranteed the existence of a diverging sequence of eigenvalues $\lkb(\Om,\Gamma)$ which admit the min-max characterization (counting multiplicity)
$$
\lkb(\Omega,\Gamma):=\min_{\overset{V\subset\Theta(\Omega,\Gamma)}{\dim V=k}}\max_{u\in V\setminus\left\{0\right\}} \frac{\displaystyle\int_{\Om}|\nabla u|^2\:dx+\beta\int_{\partial^*\Omega\cup\Gamma}[\gamma_l(u)^2+\gamma_r(u)^2]\:d\hn}{\displaystyle\int_{\Om}u^2\:dx}.
$$
Concerning the the perimeter constraint, we make use of the {\it generalized perimeter}
$$
\Prob{\Om}{\Gamma}:=Per(\Om)+2\hn(\Gamma),
$$
where $Per(\Om)$ stands for the usual perimeter of $\Om$. The definition of $\Prob{\Om}{\Gamma}$ is again suggested by the interpretation of the inner crack $\Gamma$ as degenerated holes or inwards folds of the outer boundary, so that its contribution to the generalized perimeter is given naturally by twice its area $\hn$. Related notions of perimeter have been considered by Cerf in \cite{Ce02} in the study of the lower semicontinuous envelope of the Hausdorff measure for the approximation by smooth sets, and by Henrot and Zucco in  \cite{HZ19} in relationship with the Minkowski content. 
\par
 We reformulate problem \eqref{eq:pbintr} in the form
\begin{equation}\label{eq:mainpb-intr}
\min\left\{\lkb(\Omega,\Gamma): (\Omega,\Gamma)\in\mathcal{A}(\R^N),\ \Prob{\Omega}{\Gamma}=p\right\}.
\end{equation}
Notice that for a regular bounded domain $\Om\subset\R^N$, we have that $(\Om,\emptyset)\in \as(\R^N)$ with 
$$
\lkb(\Om,\emptyset)=\lambda_{k,\beta}(\Om)\qquad\text{and}\qquad\Prob{\Om}{\emptyset}=Per(\Om)=\hn(\partial\Om).
$$
We thus see that \eqref{eq:mainpb-intr} is a natural generalization of the original problem \eqref{eq:pbintr}.
 \vskip10pt
 The main result of the paper (Theorem \ref{th:main1}) is that problem \eqref{eq:mainpb-intr} is well posed: in addition, the minimal eigenvalue $\lkb$ is equal to the infimum of $\lambda_{k,\beta}$
 on regular domains, so that the generalized problem  \eqref{eq:mainpb-intr} can be seen as a good {\it relaxation} of the original one \eqref{eq:pbintr}.
\par
Existence of optimal configurations is established by applying the direct method of the Calculus of Variations. A delicate point is given by the compactness of a minimizing sequence $(\Om_n,\Gamma_n)$, in particular on the side of the inner cracks $\Gamma_n$ (compactness for $\Om_n$, at least locally, is guaranteed by the properties of the perimeter). In this direction we employ a notion of variational convergence for rectifiable sets, called  $\sigma^2$-convergence (see Section \ref{sec:sigma2}), introduced by Dal Maso, Francfort and Toader in \cite{DMFT} to study existence of crack evolutions in finite elasticity. It is a notion of convergence which enjoys good compactness and lower semicontinuity properties, and turns out to be very natural in our context: it plays essentially the same role of the Hausdorff convergence for compact connected cracks with finite $\hs^1$ length in dimension two, providing a generalization of G\"ol\c ab semicontinuity theorem of the length to higher dimensions.

 \par
 On the basis of Theorem, \ref{th:main1} we can establish existence of minimizers for functionals involving only some eigenvalues (see Theorem \ref{thm:main3}), whose prototype is $ (\Om,\Gamma)\mapsto \tilde \lambda_{1,\beta}(\Om,\Gamma)+\dots+\tilde\lambda_{k,\beta}(\Om,\Gamma)$ or more generally $(\Om,\Gamma)\mapsto [\tilde \lambda^p_{1,\beta}(\Om,\Gamma)+\dots+\tilde\lambda^p_{k,\beta}(\Om,\Gamma)]^{1/p}$ with $p>1$.
 
 \vskip10pt
 The paper is organized as follows. In Section \ref{sec:prel} we fix the notation and recall the main definitions and properties concerning sets of finite perimeter, special functions of bounded variation, and the variational $\sigma^2$-convergence for rectifiable sets. In Section \ref{sec:adm} we define the family of admissible configurations $(\Om,\Gamma)\in \as(\R^N)$, and generalize the Robin-Laplacian boundary value problem to those geometries, defining in  particular the generalized eigenvalues $\lkb(\Om,\Gamma)$. The main results of the paper are stated in Section \ref{sec:main}. Section \ref{sec:tech} collects some technical compactness, lower semicontinuity and approximation properties concerning the admissible configurations and their associated functional spaces. The proof of the main results is given in Section \ref{sec:proofs}.
 \par

\section{Notation and preliminaries}
\label{sec:prel}

\subsection{Basic notation}
If $E \subseteq \R^N$, we will denote with $|E|$ its $N$-dimensional Lebesgue measure, and by $\hn(E)$ its $(N-1)$-dimensional Hausdorff measure: we refer to \cite[Chapter 2]{EvansGariepy} for a precise definition, recalling that for sufficiently regular sets $\hn$ coincides with the usual area measure. Moreover, we denote by $E^c$ the complementary set of $E$, and by $1_E$ its characteristic function, i.e., $1_E(x)=1$ if $x \in E$, $1_E(x)=0$ otherwise. Finally, for $t\in [0,1]$ we will write $E^t$ for the points of density $t$ for $E$ (see \cite[Definition 3.60]{AFP}).
\par
If $A \subseteq \R^N$ is open and $1 \le p \le +\infty$, we denote by $L^p(A)$ the usual space of $p$-summable functions on $A$ with norm indicated by $\|\cdot\|_p$. $W^{k,2}(A)$ will stand for the Sobolev  space of functions in $L^2(A)$ whose derivatives up to order $k$ in the sense of distributions belongs to $L^2(A)$. Finally $\ms_b(A;\R^N)$ will denote the space of $\R^N$-valued Radon measures on $A$, which can be identified with the dual of $\R^N$-valued continuous functions on $A$ vanishing at the boundary. We will denote by $|\cdot|$ its total variation.
\par
We say that $\Gamma\subseteq \R^N$ is $\hn$-countably rectifiable if
$$
\Gamma=N\cup \bigcup_{i\in \N}\Gamma_i
$$
where $\hn(N)=0$ and $\Gamma_i\subseteq \ms_i$ are Borel sets, where $\ms_i$ is a $C^1$-hypersurface of $\R^N$. It is not restrictive to assume that the sets $\Gamma_i$ are mutually disjoint. In the rest of the paper, we will write simply rectifiable in place of $\hn$-countably rectifiable. If $\Gamma_1,\Gamma_2$ are rectifiable, we will write $\Gamma_1\tsub \Gamma_2$ if $\hn(\Gamma_1\setminus \Gamma_2)=0$, and $\Gamma_1\teq \Gamma_2$ if $\hn((\Gamma_1\setminus \Gamma_2)\cup (\Gamma_2\setminus \Gamma_1))=0$.

\subsection{Functions of bounded variation}
\label{sec:bv}
Let $A\subseteq \R^N$ be an open set. We say that $u \in BV(A)$ if $u \in L^1(A)$ and its derivative in the sense of distributions is a finite Radon measure on $A$, i.e., $Du \in \ms_b(A;\R^N)$. $BV(A)$ is called the space of {\it functions of bounded variation} on $A$. $BV(A)$ is a Banach space under the norm $\|u\|_{BV(A)}:=\|u\|_{L^1(A)}+\|Du\|_{\ms_b(A;\R^N)}$.  We call $|Du|(A):=\|Du\|_{\ms_b(A;\R^N)}$ the {\it total variation} of $u$. We refer the reader to \cite{AFP} for
an exhaustive treatment of the space $BV$.
\par
If $u\in BV(A)$, then the measure $Du$ can be decomposed canonically (and uniquely) as
$$
Du=D^au+D^ju+D^cu.
$$
The measure $D^au$ is the absolutely continuous part (with respect to the Lebesgue measure) of the derivative: the associated density is denoted by $\nabla u\in L^1(A;\R^N)$. The measure $D^ju$ is the {\it jump part} of the derivative and it turns out that
$$
D^ju=(u^+-u^-)\otimes \nu_u \,\hn\lfloor J_u.
$$
Here $J_u$ is the {\it jump set} of $u$, $\nu_u$ is the normal to $J_u$, while $u^\pm$ are the upper and lower approximate limits at $x$. It turns out that $J_u$ is a  rectifiable set: if we choose the orientation given by a normal vector field $\nu_u$ we have $\hn$-a.e. 
$$
u^+=\gamma_r(u)\qquad\text{and}\qquad u^-=\gamma_l(u)
$$
where $\gamma_r(u)$ and $\gamma_l(u)$ are the right and left traces of $u$ on the rectifiable set $J_u$, associated to the orientation given by $\nu_u$.
Finally $D^cu$ is called the {\it Cantor part} of the derivative, and it vanishes on sets which are $\sigma$-finite with respect to $\hn$. Clearly $D^ju+D^cu$ is the singular part $D^su$ of $Du$ with respect to $\leb^N$.
\par
The space $SBV(A)$ of Special Functions of Bounded Variation on $A$ is defined as
$$
SBV(A):=\{u\in BV(A)\,:\, D^cu=0\},
$$
i.e., it is composed of those functions of bounded variation with vanishing Cantor part.

The following result is contained in \cite[Theorem 3.1 and Remark 3.2]{CT} and provides a very important tool to approximate our relaxed configurations via smooth sets.

\begin{theorem}\label{teo:cor_toa}
Let $\Omega\subset\R^N$ be open and bounded with Lipschitz boundary, and let $q>1$. Let $u\in SBV(\Omega)\cap L^\infty(\Omega)$ be such that $\nabla u\in L^q(\Omega;\R^N)$ and $\hn(J_u)<+\infty$.
There exists $(u_n)_n$ such that the following items hold true for every $n\in\N$.
\begin{itemize}
    \item[(i)] $J_{u_n}$ is polyhedral in $\Omega$, i.e., $\hn((\overline{J_{u_n}}\setminus J_{u_n})\cap \Om)=0$, and $\overline{J_{u_n}}\cap \Om$ is given by intersection with $\Om$ of the union of a finite number of $(N-1)$-dimensional simplexes.
    \item[(ii)] $u_n\in W^{k,\infty}(\Omega\setminus \overline{J_{u_n}})$ for every $k\ge 1$.
    \item[(iii)] It holds
    $$u_n\to u\quad\text{strongly in $L^1(\Omega)$},$$
$$\nabla u_n\to \nabla u\quad\text{strongly in $L^q(\Omega;\R^{N})$},$$
and
$$
\limsup_{n\to+\infty}\int_{J_{v^j_n}\cap A}\varphi(x,v_n^+,v_n^-,\nu_{J_{v_n}})\:d\hn
\le\int_{J_{v}\cap A}\varphi(x,v^-,v^-,\nu_{J_{v}})\:d\hn
$$
for any open set $A\subset\subset \Omega$ and every upper continuous function $\varphi:\Omega\times\R\times\R\times\mathbb{S}^{N-1}\to[0,+\infty[$ such that $\varphi(x,a,b,\nu)=\varphi(x,b,a,-\nu)$. If $\varphi$ is locally bounded near the boundary, i.e., $\limsup_{(x_n,a_n,b_n,\nu_n)\to (x,a,b,\nu)}\varphi(x_n,a_n,b_n,\nu_n)\,d\hn<+\infty$ where $x_n\in \Om$ and $x\in \partial\Om$, then we can choose $A\subseteq \Om$.
\end{itemize}
\end{theorem}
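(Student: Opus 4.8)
The plan is to prove Theorem \ref{teo:cor_toa} by a two-stage approximation: first reduce to the case where $J_u$ is already polyhedral, then mollify away from the polyhedral jump set to gain the higher regularity. Since the statement is essentially a piecewise-smooth density result for $SBV$ functions with finite jump set — quoted from \cite{CT} — I would not reprove it from scratch in a serious way, but the natural self-contained argument runs as follows.

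\emph{Step 1: Polyhedral approximation of the jump set.} Starting from $u\in SBV(\Omega)\cap L^\infty(\Omega)$ with $\nabla u\in L^q$ and $\hn(J_u)<+\infty$, one uses the Cortona-type density theorems for $SBV$ (in the spirit of the results of Braides--Chambolle--Solci and Cortesani--Toader) to find functions $w_n$ whose jump sets are contained in a finite union of $(N-1)$-simplices, i.e.\ polyhedral in $\Omega$, with $w_n\to u$ in $L^1$, $\nabla w_n\to \nabla u$ in $L^q$, and the surface energies converging appropriately. The key technical device is a local reflection/extension near each piece of $J_u$: one covers $\hn$-almost all of $J_u$ by a finite family of small cubes in which $J_u$ is, up to small error, the graph of a Lipschitz function, replaces that graph by a polyhedral approximation, and correspondingly modifies $u$ by composition with a bi-Lipschitz map straightening the interface. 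The $L^\infty$ bound is preserved, and $W^{k,\infty}$ regularity of $w_n$ away from the polyhedral set can be arranged by an additional interior mollification that does not see the (closed, polyhedral) jump set.

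\emph{Step 2: Lower semicontinuity inequality for the surface term.} The asymmetric estimate
$$
\limsup_{n\to+\infty}\int_{J_{u_n}\cap A}\varphi(x,u_n^+,u_n^-,\nu_{J_{u_n}})\,d\hn \le \int_{J_u\cap A}\varphi(x,u^-,u^-,\nu_{J_u})\,d\hn
$$
is the delicate point: note the right-hand side evaluates $\varphi$ with \emph{equal} traces $u^-,u^-$, so one must show that in the approximation the two one-sided traces $u_n^\pm$ can be made to converge to a \emph{common} value (morally $u^-$) $\hn$-a.e.\ on the relevant portion of the jump set, while the measure of $J_{u_n}$ does not concentrate beyond $J_u$. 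This is achieved by constructing $u_n$ so that on each polyhedral facet the jump height tends to zero except on an $\hn$-small remainder; the symmetry hypothesis $\varphi(x,a,b,\nu)=\varphi(x,b,a,-\nu)$ guarantees the estimate is independent of the chosen orientation. Upper semicontinuity (upper continuity) of $\varphi$ together with the uniform $L^\infty$ bound on $u_n$ then yields the $\limsup$ bound via a standard Fatou-type argument on the converging graphs.

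\emph{Step 3: Behaviour up to the boundary.} For the final claim, when $\varphi$ is locally bounded near $\partial\Omega$ in the stated sense, one checks that the cube covering of $J_u$ and the associated modifications can be carried out in a neighbourhood of $\overline{\Omega}$ rather than only on $A\subset\subset\Omega$, using the Lipschitz regularity of $\partial\Omega$ to extend $u$ slightly beyond $\Omega$ first; the local boundedness hypothesis is exactly what is needed to control the surface energy contributions of facets touching $\partial\Omega$ in the limit. The main obstacle throughout is Step 2 — managing the traces and the (non-)concentration of $\hn\lfloor J_{u_n}$ simultaneously — which is precisely why one appeals to \cite[Theorem 3.1 and Remark 3.2]{CT} rather than redoing the construction here; for the purposes of this paper the theorem is used as a black box to pass from relaxed admissible configurations to smooth competitors.
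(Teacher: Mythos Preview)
The paper does not prove this theorem at all: it is quoted verbatim as ``contained in \cite[Theorem 3.1 and Remark 3.2]{CT}'' and used as a black box, exactly as you say in your final sentence. So there is no proof in the paper to compare your sketch against.

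That said, your Step 2 is built on a misreading. The expression $\varphi(x,v^-,v^-,\nu_{J_v})$ on the right-hand side (and the stray $v_n$, $v^j_n$ in place of $u_n$) are typographical slips in the statement, not a genuine feature of the Cortesani--Toader result. The correct inequality has both one-sided traces $u^+,u^-$ on the right, and this is exactly how the authors themselves invoke it later in the proof of Theorem \ref{thm:den} (see \eqref{eq:CTvarphi}, where the right-hand side carries $\gamma_l(u),\gamma_r(u)$). Your attempt to explain why the approximants' traces collapse to a common value $u^-$ is therefore aimed at a phantom; the actual construction keeps the jump heights and shows that the polyhedral jump sets, together with their traces, approximate those of $u$ in the natural sense. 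Your Steps 1 and 3 are reasonable outlines of the Cortesani--Toader strategy, but since the paper treats the whole thing as a citation, no argument is expected here.
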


\subsection{Sets of finite perimeter}
\label{sec:per}
Given $E\subseteq \R^N$ measurable and $A\subseteq \R^N$ open, we say that $E$ has finite perimeter in $A$ (or simply has finite perimeter if $A=\R^N$) if
$$
Per(E;A):=\sup\left\{\int_E div(\varphi)\,dx\,:\, \varphi\in C^\infty_c(A;\R^N), \|\varphi\|_\infty\le 1\right\}<+\infty.
$$
If $|E|<+\infty$, then $E$ has finite perimeter if and only if $1_E\in BV(\R^N)$.
It turns out that
$$
D1_E=\nu_E \hn\lfloor \partial^*E,\qquad Per(E;\R^N)=\hn(\partial^*E),
$$
where the rectifiable set $\partial^*E$ is called the {\it reduced boundary} of $E$, and $\nu_E$ is the associated inner approximate normal (see \cite[Section 3.5]{AFP}). It turns out that $\partial^*E\subseteq \partial E$, but the topological boundary can in general be much larger than the reduced one.

\par
The next proposition is the collection of two approximation results proved in \cite{CoTo}, and will prove particularly useful for our problem.

\begin{proposition}[\bf Interior approximation via smooth sets]\label{pro:comitorres}
Let $\mu$ be a Radon measure on $\R^N$ such that $\mu<<\hn$ and let $E\subset\R^N$ be a bounded set of finite perimeter. Let $u_k:=1_E*\rho_{\varepsilon_k}$, where $\rho_k$ is a regularizing kernel, and let $A_{k,t}:=\{u_k>t\}$. Then for a.e. $t\in (0,1)$, $A_{k,t}$ is a smooth set and, for a.e. $t\in (1/2,1)$, the sequence $(A_{k,t})_k$ provides an interior approximation of $E$, i.e., 
$$
\lim_{k\to+\infty}|\mu|(A_{k,t}\Delta E^1)=0
$$
and
$$
\lim_{k\to+\infty}\hn(\partial A_{k,t}\setminus E^1)=0.
$$
\end{proposition}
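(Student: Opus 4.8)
The plan is to deduce the two assertions from elementary properties of the mollification $u_k=1_E*\rho_{\varepsilon_k}$, the coarea formula, and De Giorgi's blow-up at points of $\partial^*E$; the point of the threshold $t>1/2$ is that it forces the superlevel sets $\{u_k>t\}$ to sit asymptotically on the ``interior side'' of $\partial^*E$.

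First the regularity statement. Since $E$ is bounded, $u_k\in C^\infty_c(\R^N)$ with $0\le u_k\le 1$, and $\nabla u_k(x)=\int_{\partial^*E}\rho_{\varepsilon_k}(x-y)\,\nu_E(y)\,d\hn(y)$. By Sard's theorem a.e.\ $t\in(0,1)$ is a regular value of $u_k$, so, discarding a countable union of null sets of $t$'s, for a.e.\ $t\in(0,1)$ every such $t$ is a regular value of every $u_k$ and $A_{k,t}=\{u_k>t\}$ is a bounded open set with smooth boundary $\{u_k=t\}$. I also record that, by the standard behaviour of radial mollifiers, $u_k(x)\to\theta(x)$ whenever the Lebesgue density $\theta(x)$ of $E$ at $x$ exists; in particular $u_k\to 1$ on $E^1$, $u_k\to 0$ on $E^0$, and $u_k\to 1/2$ on $\partial^*E$ (where the density is $1/2$).

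For the measure convergence $|\mu|(A_{k,t}\Delta E^1)\to 0$: by Federer's theorem $\partial^e E:=\R^N\setminus(E^0\cup E^1)$ coincides with $\partial^*E$ up to an $\hn$-negligible, hence (as $\mu\ll\hn$) $|\mu|$-negligible, set, so $|\mu|$-a.e.\ point of $\R^N$ lies in $E^0$, $E^1$ or $\partial^*E$. At each such point and for $t\in(1/2,1)$ the pointwise limits above give $1_{A_{k,t}}\to 1_{E^1}$: the only case worth noting is $x\in\partial^*E$, where $u_k(x)\to 1/2<t$ forces $x\notin A_{k,t}$ for $k$ large, in agreement with $x\notin E^1$ --- and this is precisely where $t>1/2$ enters. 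Since for $k$ large all the sets involved lie in a fixed ball, on which $|\mu|$ is finite, dominated convergence gives $|\mu|(A_{k,t}\Delta E^1)=\int|1_{A_{k,t}}-1_{E^1}|\,d|\mu|\to 0$ for every $t\in(1/2,1)$.

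The delicate point, which I expect to be the main obstacle, is $\hn(\partial A_{k,t}\setminus E^1)\to 0$. Since $\partial A_{k,t}=\{u_k=t\}$ and $\R^N\setminus E^1=E^0\cup\partial^e E$, one splits this into $\{u_k=t\}\cap E^0$ and $\{u_k=t\}\cap\partial^e E$. The second piece is $\hn$-negligible for a.e.\ $t$: indeed $\partial^e E$ has finite $\hn$-measure, hence zero Lebesgue measure, so $\int_0^1\hn(\{u_k=t\}\cap\partial^e E)\,dt=\int_{\partial^e E}|\nabla u_k|\,dx=0$ by the coarea formula. For the first piece, the coarea formula gives
\[
\int_{1/2}^{1}\hn(\{u_k=s\}\cap E^0)\,ds=\int_{E^0\cap\{1/2<u_k<1\}}|\nabla u_k|\,dx\le\int_{E^0\cap\{u_k>1/2\}}|\nabla u_k|\,dx,
\]
and it suffices to show the right-hand side tends to $0$. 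Using $|\nabla u_k(x)|\le\int_{\partial^*E}\rho_{\varepsilon_k}(x-y)\,d\hn(y)$ and Tonelli, it is at most $\int_{\partial^*E}\big(\varepsilon_k^{-N}\|\rho\|_\infty\,\big|E^0\cap\{u_k>1/2\}\cap B(y,\varepsilon_k)\big|\big)\,d\hn(y)$, with integrand also bounded by $1$; and for $\hn$-a.e.\ $y\in\partial^*E$ this integrand tends to $0$, because rescaling $B(y,\varepsilon_k)$ to the unit ball De Giorgi's theorem gives $\varepsilon_k^{-1}(E-y)\to H_y$ (a half-space) in $L^1_{\loc}$, hence $\varepsilon_k^{-1}(E^0-y)\to\R^N\setminus H_y$ in measure, while $\varepsilon_k^{-1}(\{u_k>1/2\}-y)\to H_y$ (the mollification of a half-space by a radial kernel exceeds $1/2$ exactly on that half-space), so the rescaled set $E^0\cap\{u_k>1/2\}$ has vanishing volume in the limit. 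Dominated convergence on $(\partial^*E,\hn\lfloor\partial^*E)$, whose total mass is $Per(E)<+\infty$, then yields $\int_{E^0\cap\{u_k>1/2\}}|\nabla u_k|\,dx\to 0$ and hence $\hn(\{u_k=s\}\cap E^0)\to 0$ for a.e.\ $s\in(1/2,1)$. Combining the two convergences proves the proposition; the hard part is the blow-up step just described, which is the precise form of the heuristic that superlevel sets at height above $1/2$ lie asymptotically inside $E^1$.
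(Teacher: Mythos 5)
The paper does not prove this proposition at all: it is quoted verbatim from Comi and Torres (reference \cite{CoTo}), so there is no ``paper proof'' to compare against. Your blind proof therefore has to stand on its own merits, and it is the natural line of attack: mollification, pointwise convergence to Lebesgue density, coarea, and De Giorgi blow-up. The first two conclusions (smoothness for a.e.~$t$ via Sard, and $|\mu|(A_{k,t}\Delta E^1)\to 0$ via Federer's theorem and dominated convergence on a bounded set) are correct, and the blow-up estimate showing $\int_{E^0\cap\{u_k>1/2\}}|\nabla u_k|\,dx\to 0$ is carried out carefully and is right.

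The gap is in the very last inference: from
$$
\int_{1/2}^{1}\hn\bigl(\{u_k=s\}\cap E^0\bigr)\,ds\;\longrightarrow\;0
$$
you conclude ``hence $\hn(\{u_k=s\}\cap E^0)\to 0$ for a.e.~$s\in(1/2,1)$.'' This step is false as stated: $L^1((1/2,1))$-convergence of the nonnegative functions $\phi_k(s):=\hn(\{u_k=s\}\cap E^0)$ to $0$ only gives convergence in measure, hence a.e.~convergence \emph{along a subsequence} (Fatou gives $\liminf_k\phi_k(s)=0$ a.e., not $\lim_k$). Without additional structure (monotonicity in $t$, equicontinuity, a summable rate in $k$, or a pointwise-in-$t$ blow-up estimate on $\hn(\{u_k=t\}\cap E^0)$ directly), a ``typewriter''-type oscillation in $t$ cannot be ruled out. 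The same issue would infect the $\partial^e E$ piece if you tried to run it for the full sequence rather than for each fixed $k$ separately --- though there you correctly take the countable union over $k$ of null $t$-sets, which is why that part is fine.

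To be fair, for the way Proposition \ref{pro:comitorres} is actually used in the paper (Step 1 of the proof of Theorem \ref{thm:den}, where one only needs to produce \emph{one} sequence of smooth interior approximants), a subsequence is all that is required, so your argument would suffice there after an explicit extraction. But the proposition as stated asserts convergence of the full sequence for a.e.~$t$, and closing that gap needs a genuinely different idea --- e.g.~an argument pointwise in $t$ comparing $\hn(\{u_k=t\})=Per(\{u_k>t\})$ with the part of that level set inside $E^1$, using lower semicontinuity of the perimeter of $\{u_k>t\}\cap E^1$ and the a.e.-$t$ convergence of $Per(\{u_k>t\})$ to $Per(E)$; or the covering argument of Comi--Torres/Schmidt. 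As written, the proof establishes the result only along a subsequence in $k$.

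One further, minor point: the pointwise convergence $u_k(x)\to\theta(x)$ at density points relies on $\rho$ being radially symmetric and (radially) nonincreasing, so that $\rho_{\varepsilon}$ is a positive superposition of normalised indicator functions of balls; it is worth stating this hypothesis on the kernel explicitly, since ``radial'' alone does not suffice.
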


\subsection{A variational convergence for rectifiable sets}
\label{sec:sigma2} 
 We recall the notion of $\sigma^2$-convergence for rectifiable sets introduced in \cite{DMFT} to deal with problems in fracture mechanics. It is a variational notion of convergence for rectifiable sets which enjoy compactness and lower semicontinuity properties under uniform bound for the associated $\hn$ measure, which are very similar to that enjoyed by connected closed sets in $\R^2$ with respect to Hausdorff convergence in view of Go\c l\" ab semicontinuity theorem. The definition is based on the use of the space $SBV$. Recall the notation $\Gamma_1\tsub \Gamma_2$ and $\Gamma_1\teq \Gamma_2$ which denote inclusion and equality up to $\hn$-negligible sets.

\begin{definition}[\bf $\sigma^2$-convergence]
\label{def:sigma2}
Let $D\subset\R^N$ be open and bounded, and let $\Sigma_n,\Sigma\subset D$ be rectifiable sets such that $\hn(\Sigma_n),\hn(\Sigma)\le C$ for some $C>0$. We say that 
$$
\Sigma_n \to \Sigma \qquad\text{in the sense of $\sigma^2$-convergence}
$$ 
if the following two conditions are satisfied.
\begin{itemize}
\item[(a)] If $u_j\in SBV(D)$ with $J_{u_j}\tsub \Sigma_{n_j}$ for some sequence $n_j\to+\infty$, and $u\in SBV(D)$ are such that $\|u_j\|_\infty, \|u\|_\infty \le C$,
$$
u_j \to u\qquad\text{strongly in }L^1(D)
$$
and
$$
\nabla u_j \to \nabla u\qquad\text{weakly in }L^2(D;\R^N),
$$
then $J_{u}\tsub \Sigma$.

\item[(b)] There exist a function $u\in SBV(D)$ with $\nabla u\in L^2(D;\R^N)$ and a sequence $u_n \in SBV(D)$ with $\|u\|_\infty, \|u_n\|_\infty \le C$,
$$
u_n \to u\qquad\text{strongly in }L^1(D)
$$
and
$$
\nabla u_n \to \nabla u\qquad\text{weakly in }L^2(D;\R^N),
$$
such that  $J_u \teq\Sigma$ and $J_{u_n}\tsub \Sigma_{n}$ for every $n\in\N$.
\end{itemize}
\end{definition}

\begin{remark}
{\rm
Condition (a) guarantees that $\Sigma$ contains the jump sets of functions which are suitable limits of functions jumping on $\Sigma_n$. Condition (b) ensures that $\Sigma$ is the smallest set which enjoys this property. The notion of convergence introduced in \cite{DMFT} can indeed be generalized to an exponent $p\in ]1,+\infty[$: we will use only the case $p=2$.
}
\end{remark}

The following result compactness and lower semicontinuity result holds true, and will be fundamental for our analysis.

\begin{theorem}[\bf Compactness and lower semicontinuity for $\sigma^2$-convergence]
\label{thm:sigma2}
Let $D\subset\R^N$ be open and bounded. For every sequence $\Sigma_n\subset D$ of rectifiable sets such that $\hn(\Sigma_n)\le C$, there exist a rectifiable set $\Sigma\subset D$ and a subsequence $\Sigma_{n_k}$ such that
$$
\Sigma_{n_k} \to \Sigma \qquad\text{in the sense of $\sigma^2$-convergence.}
$$
Moreover we have
$$
\hn(\Sigma) \le \liminf_{n\to +\infty} \hn(\Sigma_n).
$$
\end{theorem}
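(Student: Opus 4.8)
The plan is to adapt the argument of Dal Maso--Francfort--Toader \cite{DMFT}: I would identify the $\sigma^2$-limit $\Sigma$ as an $\hn$-essential supremum of the jump sets of functions that arise as limits of $SBV$-functions jumping on (subsequences of) the $\Sigma_n$, and then exhibit a single function realizing it. The only analytic ingredient is the classical closure and lower semicontinuity theorem for $SBV$ (see \cite{AFP}): if $v_j\in SBV(D)$ with $\sup_j\|v_j\|_\infty<+\infty$, $\sup_j\hn(J_{v_j})<+\infty$, $(\nabla v_j)_j$ bounded in $L^2(D;\R^N)$ and $v_j\to v$ in $L^1(D)$, then $v\in SBV(D)$, $\nabla v_j\weak\nabla v$ in $L^2(D;\R^N)$ and $\hn(J_v)\le\liminf_j\hn(J_{v_j})$. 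In particular any $v$ arising this way with $J_{v_j}\tsub\Sigma_{n_j}$ satisfies $\hn(J_v)\le\liminf_j\hn(\Sigma_{n_j})\le C$. Call such a $v$ an \emph{admissible limit} along $(\Sigma_{n_j})_j$ (with the uniform bound $\|v_j\|_\infty\le C$ as in Definition \ref{def:sigma2}); note that the hypothesis ``$\nabla v_j\weak\nabla v$ in $L^2$'' of condition (a) automatically supplies the $L^2$-bound on $(\nabla v_j)$, so condition (a) concerns exactly admissible limits.

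\emph{Construction of $\Sigma$ and of the subsequence.} I would proceed greedily and diagonally. Starting from $(\Sigma_n)$ I build nested subsequences $\Sigma^0\supseteq\Sigma^1\supseteq\cdots$ and functions $v^{(l)}$: given $\Sigma^{l-1}$, let $\delta_l$ be the supremum of $\hn\big(J_v\setminus(J_{v^{(1)}}\cup\cdots\cup J_{v^{(l-1)}})\big)$ over admissible limits $v$ along subsequences of $\Sigma^{l-1}$ (so $\delta_l\le C$), and pick $v^{(l)}$, with an approximating sequence $(v^{(l)}_n)_n$ along a subsequence $\Sigma^l\subseteq\Sigma^{l-1}$, realizing $\delta_l$ up to $2^{-l}$. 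The diagonal subsequence $(\Sigma_n)$, eventually contained in each $\Sigma^l$ and relabelled, then makes every $v^{(l)}$ an admissible limit along $(\Sigma_n)$ with the \emph{same} $(v^{(l)}_n)_n$. Set $\Sigma:=\bigcup_l J_{v^{(l)}}$: it is rectifiable (a countable union of rectifiable jump sets) and $\hn(\Sigma)\le C$ (each finite union $J_{v^{(1)}}\cup\cdots\cup J_{v^{(L)}}$ is again the jump set of an admissible limit, via a finite version of the series below). Moreover $\sum_l\delta_l\le\hn(\Sigma)+1<+\infty$, hence $\delta_l\to0$; since any admissible limit along a subsequence of $(\Sigma_n)$ is also one along every $\Sigma^{l-1}$, if some admissible limit $v$ along a subsequence of $(\Sigma_n)$ had $\hn(J_v\setminus\Sigma)=\eta>0$ then $\delta_l\ge\eta$ for all $l$, a contradiction. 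Thus $J_v\tsub\Sigma$ for every such $v$, which is precisely condition (a) of Definition \ref{def:sigma2}.

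\emph{The recovery function.} For condition (b) I would realize $\Sigma$ through a rapidly convergent series $u:=\sum_i c_i v^{(i)}$, with companion $u_n:=\sum_i c_i v^{(i)}_n$ ($v^{(i)}_n:=0$ for the finitely many initial $n$ where it is undefined). Choosing $c_i>0$ small --- e.g. $c_i\in[\tfrac12\bar c_i,\bar c_i]$ with $\bar c_i:=2^{-i}\big(1+\|\nabla v^{(i)}\|_2+\sup_n\|\nabla v^{(i)}_n\|_2\big)^{-1}$, so $\sum_ic_i\le1$ --- makes $\|u\|_\infty,\|u_n\|_\infty\le C$, makes the series converge in $BV(D)$ (so $u\in SBV(D)$, since the partial sums are) with $\nabla u,\nabla u_n\in L^2(D;\R^N)$ and $\sum_ic_i\nabla v^{(i)}_n$ convergent uniformly in $n$; combined with $v^{(i)}_n\to v^{(i)}$ in $L^1$ and $\nabla v^{(i)}_n\weak\nabla v^{(i)}$ this yields $u_n\to u$ in $L^1(D)$ and $\nabla u_n\weak\nabla u$ in $L^2(D;\R^N)$, while $J_{u_n}\tsub\bigcup_iJ_{v^{(i)}_n}\tsub\Sigma_n$ and $J_u\tsub\bigcup_iJ_{v^{(i)}}=\Sigma$. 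The only subtle point is the reverse inclusion $\Sigma\tsub J_u$, which could fail by cancellation of jumps in the series: since for $\hn$-a.e.\ $x\in\Sigma$ one has $[u](x)=\sum_ic_i[v^{(i)}](x)$ (absolutely convergent, w.r.t.\ a fixed normal at $x$) with at least one nonvanishing term, a Fubini argument on the cube $\prod_i[\tfrac12\bar c_i,\bar c_i]$ --- conditioning on all coordinates but one --- shows that $\{x\in\Sigma:[u](x)=0\}$ is $\hn$-negligible for a.e.\ choice of $(c_i)$; fixing such a choice gives $J_u\teq\Sigma$, establishing condition (b).

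\emph{Lower semicontinuity} is then immediate from the recovery sequence just built: $u_n\to u$ in $L^1(D)$, $\|u_n\|_\infty\le C$, $(\nabla u_n)_n$ bounded in $L^2(D;\R^N)$, $\hn(J_{u_n})\le\hn(\Sigma_n)\le C$, so by the $SBV$ lower semicontinuity theorem
$$\hn(\Sigma)=\hn(J_u)\le\liminf_{n\to+\infty}\hn(J_{u_n})\le\liminf_{n\to+\infty}\hn(\Sigma_n).$$
I expect the delicate part to be the construction of $\Sigma$: one must extract a \emph{single} subsequence of $(\Sigma_n)$ along which the countably many generators $v^{(l)}$ are recovered \emph{simultaneously} (whence the nested/diagonal extraction), and one must arrange the series defining $u$ so that no jump cancellation occurs, i.e.\ so that $J_u$ is all of $\Sigma$ and not a proper subset.
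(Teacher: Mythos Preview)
The paper does not prove Theorem~\ref{thm:sigma2}: it is quoted without proof as a known result from \cite{DMFT}, where the notion of $\sigma^2$-convergence was introduced. There is therefore no ``paper's own proof'' to compare against.

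That said, your proposal is a faithful reconstruction of the argument in \cite{DMFT} and is correct. The greedy/diagonal selection of nested subsequences $\Sigma^l$ together with near-maximizers $v^{(l)}$, the identification $\Sigma=\bigcup_l J_{v^{(l)}}$, the telescoping bound $\sum_l\delta_l\le\hn(\Sigma)+1$ forcing $\delta_l\to0$ (hence condition~(a)), and the series recovery $u=\sum_l c_l v^{(l)}$ with a Fubini-type choice of coefficients to exclude jump cancellation (hence condition~(b)) are exactly the steps of the original proof. The final lower semicontinuity inequality via Ambrosio's $SBV$ closure theorem applied to the recovery sequence is also the standard conclusion. One small point worth making explicit in a write-up: the claim that any finite union $J_{v^{(1)}}\cup\cdots\cup J_{v^{(L)}}$ is again the jump set of an admissible limit (used to get $\hn(\Sigma)\le C$) is justified by the \emph{finite} version of your coefficient argument, which is elementary (the set of bad coefficient vectors has Lebesgue measure zero in $\R^L$), so the forward reference is harmless.
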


\section{Admissible geometries and the associated generalization of the Robin-Laplacian}
\label{sec:adm}
 In this section we introduce the class $\as(\R^N)$ of admissible geometries, and extend to this setting the Robin-Laplacian boundary value problem.

\subsection{Admissible geometries}
\label{sec:geom}
 The precise definition of the class of admissible geometries we consider is the following. Recall that $\Om^1$ stands for the set of points of density one for $\Om$.
\begin{definition}[{\bf Admissible geometries}]
We say that the couple $(\Omega,\Gamma)$ is an admissible geometry, and we write $(\Omega,\Gamma)\in\mathcal{A}(\R^N)$, if $\Omega \subset \R^N$ is a set of finite perimeter with $|\Om|<+\infty$, and $\Gamma\subset \Omega^1$ is a rectifiable set with $\hn(\Gamma)<+\infty$. 
\end{definition}

\begin{remark}
{\rm
From a geometrical point of view, we think $(\Omega,\Gamma)\in\mathcal{A}(\R^N)$ as the set $\Om$ with an ``inner'' crack $\Gamma$: the domain is thus given essentially by the possibly irregular set $\Om\setminus \Gamma$. If $\Om\subset \R^N$ is Lipschitz, then $(\Om,\emptyset)\in \as(\R^N)$.
}
\end{remark}
In order to give a meaning to the Robin boundary value problem for the Laplacian on an admissible geometry $(\Om,\Gamma)$, and to define the associated eigenvalues, we need to define a functional space which can replace the Sobolev space $W^{1,2}$ on the (possibly irregular) set $\Om\setminus \Gamma$. 
\par

\begin{remark}
\label{rem:traces}
{\rm
Given a rectifiable set $K\subset \R^N$ with $\hn(K)<+\infty$, by definition, we may write
\begin{equation*}
K=N\cup  \bigcup_{i=0}^\infty K_i,
\end{equation*}
where $\hn(N)=0$, while for every $i\in \N$ the Borel sets $K_i$ are subsets of a $\mathcal{C}^1$ -manifold $\ms_i$ of dimension $N-1$, and $K_i\cap K_j=\emptyset$ for $i\neq j$. 
\par
It is not restrictive, up to reducing $\ms_i$, to assume that $\ms_i$ is orientable with associated normal vector filed $\nu_i$, and that two continuous trace operators from $BV(\R^N)$ to $L^1(\ms_i)$, the ``left'' and ``right'' traces, are defined. For every $u\in BV(\R^N)$, let us denote by $\gamma_l^i(u),\gamma_r^i(u)$  the ``left'' and ``right'' traces of $u$ on $K_i$, using the orientation associated to $\nu_i$. 
By general theory of BV functions it is known that
\begin{equation*}
Du\lfloor K=\sum_i [\gamma^i_r(u)-\gamma^i_l(u)]\nu_i \hn\lfloor K_i.
\end{equation*}
We define global ``left'' and ``right'' traces on the full $K$ by setting
\begin{equation*}
\gamma_l(v):=\sum_i \gamma_l^i(v)
\qquad\text{and}\qquad
\gamma_r(v):=\sum_i \gamma_r^i(v).
\end{equation*}
}
\end{remark}

The functional space we are looking for is the following.

\begin{definition}[\bf Admissible functions]
Let $(\Omega,\Gamma)\in\mathcal{A}(\R^N)$. We set
\begin{multline*}
\Theta(\Omega,\Gamma):=\Bigg\{u\in SBV(\R^N): u=0\ \text{a.e. in $\Omega^c$}, \nabla u\in L^2(\R^N), J_u\tsub\partial^*\Omega\cup\Gamma\\
\text{ and }\int_{\partial^*\Omega\cup\Gamma}\left[\gamma^2_l(u)+\gamma^2_r(u)\right]\:d\hn<+\infty \Bigg\},
\end{multline*}
 where $\gamma_l(u),\gamma_r(u)$ are the left and right traces of $u$ on $\partial^*\Om\cup \Gamma$ defined according to Remark \ref{rem:traces}.
\end{definition}

\begin{remark}
{\rm
Notice that if $\Om\subset\R^N$ is open, bounded and with a Lipschitz boundary, $(\Om,\emptyset)\in \as(\R^N)$ and $\Theta(\Om,\emptyset)$ is simply given by the extension to zero outside $\Om$ of the functions in the Sobolev space $W^{1,2}(\Om)$.
}
\end{remark}

The following lemma holds true.

\begin{lemma}
\label{lem:L2}
Let $(\Omega,\Gamma)\in\mathcal{A}(\R^N)$ and $u\in \Theta(\Om,\Gamma)$. Then $u\in L^2(\Om)$ with 
$$
\|u\|_{L^2(\Om)}\le C\left( \|\nabla u\|_{L^2(\Om;\R^N)}+\|\gamma_l(u)\|_{L^2(\partial^*\Om\cup \Gamma)}+\|\gamma_r(u)\|_{L^2(\partial^*\Om\cup \Gamma)} \right),
$$
where $C=C(N,|\Om|)$.
\end{lemma}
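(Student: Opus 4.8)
The plan is to prove this Poincar\'e-type inequality by truncating $u$, applying the Gagliardo--Nirenberg--Sobolev inequality for $BV$ functions to the square of the truncation, and then closing the estimate through a scalar quadratic inequality and a monotone passage to the limit in the truncation level.

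First I would reduce to the case $u\ge 0$: indeed $|u|\in\Theta(\Om,\Gamma)$, with the same Dirichlet energy and with traces $|\gamma_l(u)|$, $|\gamma_r(u)|$ on $\partial^*\Om\cup\Gamma$, and $\||u|\|_{L^2(\Om)}=\|u\|_{L^2(\Om)}$. For $M>0$ set $u_M:=u\wedge M$ and $v_M:=u_M^2$. By the chain rule in $SBV$ (truncation, and composition with the Lipschitz map $t\mapsto t^2$ on the bounded range of $u_M$, preserve $SBV$), $v_M\in SBV(\R^N)\cap L^\infty(\R^N)$ vanishes a.e.\ on $\Om^c$, with $\nabla v_M=2\,u\,\nabla u\,1_{\{u<M\}}$ a.e., $J_{v_M}\tsub J_u\tsub\partial^*\Om\cup\Gamma$, and one-sided traces $\gamma_l(v_M)=\gamma_l(u_M)^2\le\gamma_l(u)^2$, $\gamma_r(v_M)=\gamma_r(u_M)^2\le\gamma_r(u)^2$ on $\partial^*\Om\cup\Gamma$. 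Hence, by Cauchy--Schwarz,
$$
\int_{\R^N}|\nabla v_M|\,dx\le 2\Big(\int_{\R^N}u_M^2\,dx\Big)^{1/2}\|\nabla u\|_{L^2(\Om;\R^N)}=2\,\|v_M\|_{L^1(\R^N)}^{1/2}\,\|\nabla u\|_{L^2(\Om;\R^N)},
$$
while the jump part of $Dv_M$ obeys
$$
|D^jv_M|(\R^N)=\int_{J_{v_M}}\big|\gamma_r(v_M)-\gamma_l(v_M)\big|\,d\hn\le\int_{\partial^*\Om\cup\Gamma}\big[\gamma_l(u)^2+\gamma_r(u)^2\big]\,d\hn=:A^2<+\infty,
$$
the finiteness of $A$ being precisely the defining property of $\Theta(\Om,\Gamma)$.

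Next I would invoke the Sobolev inequality for $BV$ functions, $\|v_M\|_{L^{1^*}(\R^N)}\le C_N\,|Dv_M|(\R^N)$ with $1^*=N/(N-1)$ for $N\ge 2$ (for $N=1$ one uses instead $\|v_M\|_{L^\infty(\R)}\le|Dv_M|(\R)$). Since $v_M=0$ a.e.\ outside $\Om$, H\"older's inequality on the finite-measure set $\Om$ gives $\|v_M\|_{L^1(\R^N)}\le|\Om|^{1/N}\,\|v_M\|_{L^{1^*}(\R^N)}$. Setting $X:=\|v_M\|_{L^1(\R^N)}^{1/2}=\|u_M\|_{L^2(\Om)}$ and $c:=C_N\,|\Om|^{1/N}$, the three estimates combine into
$$
X^2\le c\big(2\,\|\nabla u\|_{L^2(\Om;\R^N)}\,X+A^2\big),
$$
a quadratic inequality in $X\ge0$, whence $X\le 2c\,\|\nabla u\|_{L^2(\Om;\R^N)}+\sqrt{c}\,A$. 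This bound does not depend on $M$, so letting $M\to+\infty$ and using monotone convergence yields $u\in L^2(\Om)$ with $\|u\|_{L^2(\Om)}\le 2c\,\|\nabla u\|_{L^2(\Om;\R^N)}+\sqrt{c}\,A$; since $A\le\|\gamma_l(u)\|_{L^2(\partial^*\Om\cup\Gamma)}+\|\gamma_r(u)\|_{L^2(\partial^*\Om\cup\Gamma)}$, the stated inequality follows with $C=C(N,|\Om|)$.

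The main obstacle is the chain-rule bookkeeping of the second paragraph: one has to check carefully that $v_M=u_M^2$ really belongs to $SBV(\R^N)$ with the claimed approximate gradient, jump set and one-sided traces, and to keep in mind that $u\in L^2(\Om)$ is \emph{not} available a priori (only $u\in SBV(\R^N)$), which is exactly why the estimate must be carried out on the bounded functions $v_M$ and then passed to the limit in $M$. The other ingredients -- Cauchy--Schwarz, H\"older on $\Om$, the $BV$ Sobolev inequality, and solving a scalar quadratic inequality -- are routine.
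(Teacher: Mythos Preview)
Your argument is correct and follows essentially the same route as the paper's own proof: truncate, square, apply the $BV$ Sobolev embedding combined with H\"older on the finite-measure support, control $|D(u_M^2)|$ via the gradient and trace data, and close the estimate before letting the truncation level tend to infinity. The only cosmetic differences are that the paper truncates symmetrically (rather than first reducing to $u\ge 0$) and absorbs the cross term via Young's inequality instead of solving your scalar quadratic; neither affects the substance.
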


\begin{proof}
Clearly the space $\Theta(\Om,\Gamma)$ is closed under truncation. For every $n\in\N$ let us consider
$$
u_n:=\max\{\min\{u,n\},-n\} \in \Theta(\Om,\Gamma).
$$
By the chain rule in BV (see \cite[Theorem 3.96]{AFP}) we get $u^2_n\in BV(\R^N)$ with 
$$
\nabla(u_n)^2=2u_n\nabla u_n\qquad\text{and}\qquad \gamma_{l/r}(u_n^2)=\gamma_{l/r}(u_n)^2.
$$
Using the embedding of $BV(\R^N)$ into $L^{\frac{N}{N-1}}(\R^N)$ and since $u$ is supported in $\Om$ we can write employing also the Cauchy inequality
\begin{multline*}
\|u_n\|^2_{L^2(\Om)}\le |\Om|^{1/N}\|u^2_n\|_{L^{\frac{N}{N-1}}(\R^N)}\le C_N |\Om|^{\frac{1}{N}}|D(u_n^2)|(\R^N)\\
\le
C_N |\Om|^{\frac{1}{N}}\left( 2\int_\Om |u_n||\nabla u_n|\,dx+\int_{\partial^*\Om\cup \Gamma} |\gamma_l(u_n^2)-\gamma_r(u_n^2)|\,d\hn\right)\\
\le \frac{1}{2}\|u_n\|^2_{L^2(\Om)}+C\left(\int_\Om |\nabla u_n|^2\,dx+\int_{\partial^*\Om\cup \Gamma}[\gamma_l(u_n)^2+\gamma_r(u_n)^2]\,d\hn\right)\\
\le 
 \frac{1}{2}\|u_n\|^2_{L^2(\Om)}+C\left(\int_\Om |\nabla u|^2\,dx+\int_{\partial^*\Om\cup \Gamma}[\gamma^2_l(u)+\gamma^2_r(u)]\,d\hn\right),
\end{multline*}
where $C$ depends on $|\Om|$ but not on $n$. The conclusion follows by letting $n\to+\infty$.
\end{proof}

\begin{remark}
\label{rem:2dd-1}
{\rm
The computations of the previous proof shows that indeed $u\in L^{\frac{2N}{N-1}}(\Om)$ with
\begin{equation}
\label{eq:est2dd-1}
\|u\|_{L^{\frac{2N}{N-1}}(\Om)}\le C\left( \|\nabla u\|_{L^2(\Om;\R^N)}+\|\gamma_l(u)\|_{L^2(\partial^*\Om\cup \Gamma)}+\|\gamma_r(u)\|_{L^2(\partial^*\Om\cup \Gamma)} \right).
\end{equation}
}
\end{remark}

By generalizing to $\as(\R^N)$ the approach developed in \cite{bgt2} concerning open bounded sets with rectifiable topological boundary in dimension two, we can endow the space $\Theta(\Omega,\Gamma)$ with a scalar product which turns it into a Hilbert space, a variant of which can be used to generalize the Robin boundary value problem.
For every $u,v\in \Theta(\Omega,\Gamma)$ let us set
\begin{equation}
\label{eq:scalarpd}
(u,v)_{\Theta(\Omega,\Gamma)}:=\int_\Omega \nabla u\cdot\nabla v\,dx
+\int_{\partial^*\Omega\cup\Gamma}\left(\gamma_l(u)\gamma_l(v)+\gamma_r(u)\gamma_r(v)\right)\:d\hn.
\end{equation}
The following property holds true.

\begin{proposition}[\bf Hilbert space structure]
\label{prop:hilbert}
Let $(\Omega,\Gamma)\in\mathcal{A}(\R^N)$. The space $\Theta(\Omega,\Gamma)$ is a Hilbert space with respect to the scalar product \eqref{eq:scalarpd}. Moreover
the embedding $\Theta(\Omega,\Gamma)\hookrightarrow L^2(\Om)$ is compact.
\end{proposition}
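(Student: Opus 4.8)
The statement has two parts: completeness of $\Theta(\Omega,\Gamma)$ under the norm induced by \eqref{eq:scalarpd}, and compactness of the embedding into $L^2(\Omega)$. I would treat them in this order, since the first is needed to make sense of the second.

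For completeness, let $(u_n)_n$ be a Cauchy sequence in $\Theta(\Omega,\Gamma)$. By Lemma \ref{lem:L2}, the $\Theta$-norm controls the $L^2(\Omega)$-norm, hence $(u_n)_n$ is Cauchy in $L^2(\Omega)$ and converges to some $u\in L^2(\Omega)$ with $u=0$ a.e. in $\Omega^c$; by Remark \ref{rem:2dd-1} it is even bounded in $L^{2N/(N-1)}(\Omega)$. Also $(\nabla u_n)_n$ is Cauchy in $L^2(\R^N;\R^N)$, converging to some $g\in L^2$, and the traces $\gamma_l(u_n),\gamma_r(u_n)$ are Cauchy in $L^2(\partial^*\Omega\cup\Gamma)$, converging to some $\tau_l,\tau_r$. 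The point is to identify $g=\nabla u$, to show $J_u\tsub\partial^*\Omega\cup\Gamma$, and to identify $\tau_l=\gamma_l(u)$, $\tau_r=\gamma_r(u)$. Since all $u_n$ are supported in $\Omega$ with $J_{u_n}\tsub\partial^*\Omega\cup\Gamma$, I would write out the distributional derivative $Du_n = \nabla u_n\,\leb^N + (\gamma_r(u_n)-\gamma_l(u_n))\,\nu\,\hn\lfloor(\partial^*\Omega\cup\Gamma)$ using Remark \ref{rem:traces}, and pass to the limit: the right side converges in $\ms_b(\R^N;\R^N)$ to $g\,\leb^N + (\tau_r-\tau_l)\,\nu\,\hn\lfloor(\partial^*\Omega\cup\Gamma)$, while the left side converges to $Du$ in the sense of distributions (because $u_n\to u$ in $L^1_{\loc}$, using that $\Omega$ has finite measure). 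Hence $Du$ has this form, which simultaneously shows $u\in BV(\R^N)$, that $Du$ has no Cantor part (so $u\in SBV(\R^N)$), that $\nabla u=g$, that $J_u\tsub\partial^*\Omega\cup\Gamma$, and — comparing the jump parts on the rectifiable set and using uniqueness of the traces on each chart $\ms_i$ — that $\gamma_l(u)=\tau_l$ and $\gamma_r(u)=\tau_r$ up to the usual ambiguity; the integrability $\int[\gamma_l(u)^2+\gamma_r(u)^2]\,d\hn<+\infty$ is then inherited. Thus $u\in\Theta(\Omega,\Gamma)$ and $u_n\to u$ in the $\Theta$-norm.

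For the compact embedding, I would argue by a truncation-plus-$BV$-compactness scheme, reusing the computation in the proof of Lemma \ref{lem:L2}. Let $(u_n)_n$ be bounded in $\Theta(\Omega,\Gamma)$. For fixed $m$, the truncations $u_n^{(m)}:=\max\{\min\{u_n,m\},-m\}$ have $(u_n^{(m)})^2$ bounded in $BV(\R^N)$ uniformly in $n$ (the bound is $\tfrac12\|u_n^{(m)}\|_{L^2}^2 + C(\|\nabla u_n\|_2^2+\|\gamma_l(u_n)\|_2^2+\|\gamma_r(u_n)\|_2^2)$, all controlled), and they are supported in the fixed finite-measure set $\Omega$. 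By $BV$-compactness in $L^1$ on a bounded region — or rather on $\Omega$ after noting the supports lie there — a subsequence of $(u_n^{(m)})^2$ converges in $L^1(\Omega)$, hence a subsequence of $u_n^{(m)}$ converges in $L^2(\Omega)$ (since $|a-b|^2\le|a^2-b^2|$ for $a,b$ of the truncation form... more carefully, $u_n^{(m)}$ itself is bounded in $BV$ by the chain rule applied to the truncation function, so one may apply $BV$-compactness directly to $u_n^{(m)}$). Taking $m=1,2,3,\dots$ and a diagonal subsequence, one gets $u_{n_k}^{(m)}\to v^{(m)}$ in $L^2(\Omega)$ for every $m$. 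Finally I would show $u_{n_k}$ itself is Cauchy in $L^2(\Omega)$ by estimating $\|u_{n_k}-u_{n_j}\|_{L^2(\Omega)}$ as $\|u_{n_k}-u_{n_k}^{(m)}\|_{L^2}+\|u_{n_k}^{(m)}-u_{n_j}^{(m)}\|_{L^2}+\|u_{n_j}^{(m)}-u_{n_j}\|_{L^2}$, where the tail terms $\|u_n-u_n^{(m)}\|_{L^2(\Omega)}^2=\int_{\{|u_n|>m\}}(|u_n|-m)^2$ are made small uniformly in $n$ by the uniform $L^{2N/(N-1)}(\Omega)$ bound from Remark \ref{rem:2dd-1} (Chebyshev: $|\{|u_n|>m\}|\le C m^{-2N/(N-1)}$, then Hölder), and the middle term is small for $k,j$ large once $m$ is fixed. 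Hence $u_{n_k}$ converges in $L^2(\Omega)$, proving compactness.

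The main obstacle is the completeness argument, specifically the identification of the limit of the traces with the traces of the limit: one must be careful that the ``left/right'' traces are only defined chart-by-chart on the $\ms_i$ with a chosen orientation, so the convergence $\gamma_l(u_n)\to\tau_l$ in $L^2(\partial^*\Omega\cup\Gamma)$ must be matched with the jump-part identity $Du\lfloor K=\sum_i(\gamma_r^i(u)-\gamma_l^i(u))\nu_i\hn\lfloor K_i$ of Remark \ref{rem:traces} locally on each chart, where the continuity of the scalar trace operator $BV(\R^N)\to L^1(\ms_i)$ does the work. The compactness part is comparatively routine once Lemma \ref{lem:L2} and Remark \ref{rem:2dd-1} are in hand, modulo the standard truncation bookkeeping.
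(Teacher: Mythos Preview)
Your proof is correct. For completeness, your route and the paper's are essentially the same: both end by showing $(u_n)$ is Cauchy in the $BV(\R^N)$-norm and then invoking the $BV$-continuity of the chart-wise trace operators $\gamma_{l/r}:BV(\R^N)\to L^1(\ms_i)$ to identify $\tau_{l/r}=\gamma_{l/r}(u)$. The only cosmetic difference is that the paper first cites Ambrosio's closure theorem to place $u$ in $SBV$ with $\nabla u=g$ and $J_u\tsub\partial^*\Omega\cup\Gamma$, whereas you read this off directly from the convergence of the measure decomposition $Du_n=\nabla u_n\,\leb^N+(\gamma_r(u_n)-\gamma_l(u_n))\,\nu\,\hn\lfloor(\partial^*\Omega\cup\Gamma)$; your observation that the right-hand side converges in total variation (since $\nabla u_n\to g$ in $L^2$ on a set of finite measure and the trace differences converge in $L^2$ on a set of finite $\hn$-measure) is in fact what makes the $BV$-norm Cauchy property immediate, so Ambrosio is not really needed here.

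For the compact embedding your truncation-plus-diagonal argument works, but the paper's argument is shorter and worth knowing: a $\Theta$-bounded sequence $(u_n)$ is already bounded in $BV(\R^N)$ without truncation, since $|Du_n|(\R^N)\le\|\nabla u_n\|_{L^1(\Omega)}+\int_{\partial^*\Omega\cup\Gamma}|\gamma_r(u_n)-\gamma_l(u_n)|\,d\hn$ is controlled by Cauchy--Schwarz using $|\Omega|,\hn(\partial^*\Omega\cup\Gamma)<\infty$. One then extracts an $L^1(\Omega)$-convergent subsequence by $BV$-compactness (the finite measure of $\Omega$ together with the uniform $L^{2N/(N-1)}$ bound from Remark~\ref{rem:2dd-1} supplies tightness at infinity), and upgrades to $L^2(\Omega)$ by interpolation between $L^1$ and $L^{2N/(N-1)}$. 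This replaces your truncation, diagonal extraction, and tail estimate with a single interpolation step.
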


\begin{proof}
In view of Lemma \ref{lem:L2}, we need simply to check the completeness of the scalar product. Let $(u_n)_{n\in\N}$ be a Cauchy sequence in $\Theta(\Omega,\Gamma)$ with respect to the scalar product \eqref{eq:scalarpd}. Taking into account Lemma \ref{lem:L2}, there exist $\Phi\in L^2(\R^N;\R^N)$, $u\in L^2(\R^N)$, $\alpha_1,\alpha_2\in L^2(\partial^*\Om\cup\Gamma)$ such that $\Phi=u=0$ a.e. on $\Om^c$, 
$$
u_n\to u\qquad\text{strongly in }L^2(\R^N),
$$
$$
\nabla u_n\to \Phi\qquad\text{strongly in }L^2(\R^N;\R^N),
$$
$$
\gamma_{l/r}(u_n)\to \alpha_{l/r} \qquad\text{strongly in }L^2(\partial^*\Om\cup\Gamma).
$$
It is readily seen, thanks to Ambrosio's theorem (see \cite[Theorem 4.7]{AFP}), that $u\in SBV(\R^N)$ with $\Phi=\nabla u$ and $J_u\tsub \partial^*\Om\cup\Gamma$. Moreover, we have that $(u_n)_{n\in\N}$ is a Cauchy sequence in $BV(\R^N)$: indeed thanks to Lemma \ref{lem:L2} we may write 
\begin{multline*}
\|u_n-u_m\|_{BV(\R^N)}\\
\le \|\nabla u_n-\nabla u_m\|_{L^1(\R^N;\R^N)}
+\int_{\partial^*\Om\cup \Gamma}\left[|\gamma_l(u_n-u_m)|+|\gamma_r(u_n-u_m)|\right]\,d\hn +\|u_n-u_m\|_{L^1(\Om)} \\
\le C\left[ \|\nabla u_n-\nabla u_m\|_{L^2(\R^N;\R^N)}+\|\gamma_l(u_n-u_m)\|_{L^2(\partial^*\Om\cup\Gamma)}\right.\\
\left.+\|\gamma_r(u_n-u_m)\|_{L^2(\partial^*\Om\cup\Gamma}\right]^{1/2}\to 0.
\end{multline*}
Thanks to the continuity of the (locally defined) trace operators $\gamma_{l/r}$ with respect to the BV norm, we infer that 
$$
\alpha_{l/r}=\gamma_{l/r}(u).
$$
We conclude that $u\in \Theta(\Om,\Gamma)$ and that $u_n\to u$ with respect to the scalar product \eqref{eq:scalarpd}.
\par
Let us check the compact embedding into $L^2(\Om)$. Let $(u_n)_{n\in\N}$ be a bounded sequence in $\Theta(\Om,\Gamma)$. In view of 
\eqref{eq:est2dd-1}, we have that $(u_n)_{n\in\N}$ is bounded in $L^{\frac{2N}{N-1}}(\Om)$. Since $(u_n)_{n\in\N}$ is bounded also in $BV(\R^N)$, and supported in $\Om$ which has finite measure, up to a subsequence we have that $u_n$ converges to some $u$ strongly in $L^1(\Om)$. From the previous bounds, we have that $u\in L^{\frac{2N}{N-1}}(\Om)$ and by interpolation that the convergence is strong in $L^2(\Om)$, so that the conclusion follows.
\end{proof}

\begin{remark}[\bf Independence on the orientation]
\label{rem:ind}
{\rm
Notice that the space $\Theta(\Om,\Gamma)$ and its associated Hilbert structure given by the scalar product \eqref{eq:scalarpd} are intrinsically defined, i.e., they do not depend on the decomposition and on the orientation of $\partial^*\Om\cup \Gamma$ used to define the trace operators $\gamma_l,\gamma_r$ according to Remark \ref{rem:traces}. Indeed, in view of the general theory for $BV$ functions (see \cite[Theorem 3.77]{AFP}), traces admit an a.e. pointwise characterization as Lebesgue values on semiballs (the direction being that of the chosen normal): as a consequence, every choice for the decomposition or the orientation will cause simply a possible switch between $\gamma_l(u)$ and $\gamma_r(u)$ (so that the condition defining $\Theta(\Om,\Gamma)$ remains the same), but will pair $\gamma_{l/r}(u)$ with the corresponding $\gamma_{l/r}(v)$, leaving the scalar product unchanged.
}
\end{remark}

\subsection{Generalization of the Robin-Laplacian boundary value problem}
\label{sec:gen-rob}
Let us fix $\beta>0$ and let $(\Om,\Gamma)\in \as(\R^N)$. Given $f\in L^2(\Om)$ we say that $u\in \Theta(\Om,\Gamma)$ is the solution of the Robin-Laplacian boundary value problem on $(\Om,\Gamma)$ with coefficient $\beta$ 
$$
\begin{cases}
-\Delta u=f&\text{in }\Om\setminus \Gamma\\
\frac{\partial u}{\partial n}+\beta u=0&\text{on }\partial^*\Om\cup \Gamma
\end{cases}
$$
if
\begin{equation}
\label{eq:rob}
\forall v\in \Theta(\Om,\Gamma)\,:\, a_\beta(u,v)=\int_\Om fv\,dx,
\end{equation}
where $a_\beta(u,v)$ is the bilinear form
$$
a_\beta(u,v):=(\nabla u,\nabla v)_{L^2(\Om;\R^N)}+\beta (\gamma_l(u),\gamma_l(v))_{L^2(\partial^*\Om\cup \Gamma)}+\beta (\gamma_r(u),\gamma_r(v))_{L^2(\partial^*\Om\cup \Gamma)}.
$$
Thanks to the results of Section \ref{sec:geom}, the following proposition holds true.

\begin{proposition}[\bf Existence and uniqueness of a solution]
\label{prop:exist}
Let $(\Om,\Gamma)\in \as(\R^N)$. Then for every $f\in L^2(\Om)$ there exists one and only one solution in $\Theta(\Om,\Gamma)$ to problem \eqref{eq:rob}. Moreover the resolvent operator from $L^2(\Om)$ into $\Theta(\Om,\Gamma)$ is compact.
\end{proposition}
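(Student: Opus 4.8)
The plan is to recognize that, thanks to Proposition \ref{prop:hilbert}, everything reduces to an application of the Lax--Milgram lemma (indeed, since $a_\beta$ is symmetric, of the Riesz representation theorem) on the Hilbert space $\Theta(\Om,\Gamma)$. First I would observe that, because $\beta>0$, the bilinear form $a_\beta$ is nothing but the scalar product \eqref{eq:scalarpd} up to a norm equivalence: for every $u,v\in\Theta(\Om,\Gamma)$,
$$
\min\{1,\beta\}\,(v,v)_{\Theta(\Om,\Gamma)}\le a_\beta(v,v)\le \max\{1,\beta\}\,(v,v)_{\Theta(\Om,\Gamma)},\qquad |a_\beta(u,v)|\le \max\{1,\beta\}\,\|u\|_{\Theta(\Om,\Gamma)}\|v\|_{\Theta(\Om,\Gamma)},
$$
the second inequality following from Cauchy--Schwarz. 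Hence $a_\beta$ is a bounded, symmetric and coercive bilinear form on $\Theta(\Om,\Gamma)$.

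Next I would check that, for $f\in L^2(\Om)$, the linear functional $v\mapsto \int_\Om fv\,dx$ is continuous on $\Theta(\Om,\Gamma)$: by Cauchy--Schwarz and Lemma \ref{lem:L2},
$$
\Big|\int_\Om fv\,dx\Big|\le \|f\|_{L^2(\Om)}\|v\|_{L^2(\Om)}\le C\,\|f\|_{L^2(\Om)}\,\|v\|_{\Theta(\Om,\Gamma)}.
$$
Lax--Milgram then yields a unique $u\in\Theta(\Om,\Gamma)$ satisfying \eqref{eq:rob}. Testing \eqref{eq:rob} with $v=u$ and using coercivity gives the a priori bound
$$
\min\{1,\beta\}\,\|u\|^2_{\Theta(\Om,\Gamma)}\le a_\beta(u,u)=\int_\Om fu\,dx\le C\,\|f\|_{L^2(\Om)}\|u\|_{\Theta(\Om,\Gamma)},
$$
so the resolvent map $R\colon L^2(\Om)\to\Theta(\Om,\Gamma)$, $Rf=u$, is well defined and bounded.

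Finally, to prove that $R$ is compact as an operator into $\Theta(\Om,\Gamma)$ — the one step requiring some care — I would take a bounded sequence $(f_n)_n$ in $L^2(\Om)$ and set $u_n:=Rf_n$. The a priori bound makes $(u_n)_n$ bounded in $\Theta(\Om,\Gamma)$, so up to a subsequence $u_n\weak\bar u$ weakly in the Hilbert space $\Theta(\Om,\Gamma)$; by the compact embedding $\Theta(\Om,\Gamma)\hookrightarrow L^2(\Om)$ of Proposition \ref{prop:hilbert} we may also assume $u_n\to\bar u$ strongly in $L^2(\Om)$. Using bilinearity and testing \eqref{eq:rob} (for $f_n$) with $v=u_n-\bar u\in\Theta(\Om,\Gamma)$, coercivity gives
$$
\min\{1,\beta\}\,\|u_n-\bar u\|^2_{\Theta(\Om,\Gamma)}\le a_\beta(u_n-\bar u,u_n-\bar u)=\int_\Om f_n(u_n-\bar u)\,dx-a_\beta(\bar u,u_n-\bar u).
$$
The first term on the right tends to $0$ because $(f_n)_n$ is bounded in $L^2(\Om)$ while $u_n-\bar u\to 0$ strongly in $L^2(\Om)$; the second tends to $0$ by the weak convergence $u_n\weak\bar u$ in $\Theta(\Om,\Gamma)$. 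Hence $u_n\to\bar u$ strongly in $\Theta(\Om,\Gamma)$, so $R$ maps bounded sets into relatively compact ones. The only mildly delicate point is this last argument, in which the equation is used together with the compact embedding to upgrade weak to strong convergence; all the rest is routine Hilbert-space theory, granted Lemma \ref{lem:L2} and Proposition \ref{prop:hilbert}.
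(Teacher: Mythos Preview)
Your proof is correct and follows essentially the same route as the paper: Lax--Milgram for existence and uniqueness, then the compact embedding of Proposition~\ref{prop:hilbert} combined with the equation to upgrade weak to strong convergence in $\Theta(\Om,\Gamma)$. The only cosmetic difference is that the paper also extracts a weak limit $f$ of $(f_n)$, identifies $\bar u=Rf$, and concludes via convergence of the norms $a_\beta(u_n,u_n)\to a_\beta(\bar u,\bar u)$, whereas you estimate $a_\beta(u_n-\bar u,u_n-\bar u)$ directly; both are standard and equivalent.
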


\begin{proof}
Existence and uniqueness follow by Lax-Milgram theorem. Concerning the compactness of the resolvent operator, let $(f_n)_{n\in\N}$ be a bounded sequence in $L^2(\Om)$, and let $u_n\in\Theta(\Om,\Gamma)$ be the solution associated to $f_n$. Up to a subsequence we may assume 
$$
f_n\weak f\qquad\text{weakly in }L^2(\Om)
$$
and
$$
u_n\weak u\qquad\text{weakly in }\Theta(\Om,\Gamma).
$$
We immediately infer that $u$ is the solution associated to the right hand side $f$.
Moreover, thanks to the compact embedding in $L^2(\Om)$ given by Proposition \ref{prop:hilbert} we have
$$
u_n\to u\qquad\text{strongly in }L^2(\Om).
$$
We can thus write
$$
a_\beta(u_n,u_n)=\int_\Om f_n u_n\,dx \to \int_\Om fu\,dx=a_\beta(u,u)
$$
which yields the strong convergence in $\Theta(\Om,\Gamma)$ of $u_n$ to $u$, so that the proof is concluded.
\end{proof}

\subsection{Eigenvalues of the generalized Robin-Laplacian}
\label{sec:gen-lbk}
Using the weak formulation of the Robin-Laplacian, we can immediately define eigenvalues and eigenfunctions: we say that $\tilde\lambda$ is an eigenvalue with associated eigenfunction $u\in \Theta(\Om,\Gamma)$ if $u\not=0$ and
$$
\forall v\in \Theta(\Om,\Gamma)\,:\, a_\beta(u,v)=\tilde\lambda\int_\Om uv\,dx.
$$
In view of the Hilbert space structure of the generalized boundary value problem \eqref{eq:rob}, and of the properties of the associated resolvent operator (see Proposition \ref{prop:exist}), we deduce the existence of a sequence of eigenvalues
$\lkb(\Om,\Gamma)\to +\infty$ which can be characterized (taking multiplicity into account) through the min-max formula
\begin{equation}
\label{eq:lambdak}
\lkb(\Omega,\Gamma)=\min_{\overset{V\subset\Theta(\Omega,\Gamma)}{\dim V=k}}\max_{v\in V\setminus\left\{0\right\}}R_\beta(v),
\end{equation}

where $R_\beta(v)$ is the Rayleigh quotient
\begin{equation}
\label{eq:Rbeta}
R_\beta(v):=
\frac{\displaystyle\int_{\Om}|\nabla v|^2\:dx+\beta\int_{\partial^*\Omega\cup\Gamma}[\gamma_l^2(v)+\gamma_r^2(v)]\:d\hn}{\displaystyle\int_{\Om}v^2\:dx}.
\end{equation}
It turns out that the space which provides the minimum in \eqref{eq:lambdak} is given by $V_k:=\text{span}\{u_1,\dots,u_k\}$, where for $j\le k$ the function $u_j$ is a $L^2$-normalized eigenfunction associated to $\tilde{\lambda}_{j,\beta}(\Om,\Gamma)$.

\begin{remark}
{\rm
Notice that, if $\Omega\subset\R^N$ is a Lipschitz domain 
$$
\lambda_{k,\beta}(\Omega)=\lkb(\Omega,\emptyset).
$$
Moreover if $u_1,\ldots,u_k\in W^{1,2}(\Omega)$ are the first $k$ Robin eigenfunctions on $\Omega$, then $u_1,\ldots, u_k$, extended by zero outside $\Omega$, belong to $\Theta(\Omega,\emptyset)$ and provide the first $k$-eigenfunction of the generalized formulation.
}
\end{remark}

\begin{remark}[\bf Faber-Krahn inequality]
\label{rem:faber}
{\rm
For every $(\Om,\Gamma)\in \as(\R^N)$, the following generalized Faber-Krahn inequality holds true:
\begin{equation}
\label{eq:faber}
\lambda_{1,\beta}(B)\le \tilde \lambda_{1,\beta}(\Om,\Gamma),
\end{equation}
where $B$ is a ball such that $|B|=|\Om|$. In other words, balls are optimal domains for $\tilde \lambda_{1,\beta}$ under a volume constraint. Indeed if $u\in \Theta(\Om,\Gamma)$ is the first eigenfunction of $(\Om,\Gamma)$, then we can assume $u\ge 0$ so that  $u\in SBV^{1/2}(\R^N)$ according to the notation introduced in \cite{BucGiac} (i.e., $u^2\in SBV(\R^N)$). Then
\begin{multline*}
\tilde \lambda_{1,\beta}(\Om,\Gamma)=R_\beta(u)= \frac{\int_{\Om}|\nabla u|^2\,dx+\beta \int_{\partial^*\Om\cup \Gamma}[\gamma_l^2(u)+\gamma_r^2(u)]\,d\hn}{\int_{\Om}u^2\,dx} \\
\ge \frac{\int_{\R^N}|\nabla u|^2\,dx+\beta \int_{J_{u}}[\gamma_l^2(u)+\gamma_r^2(u)]\,d\hn}{\int_{\R^N}u^2\,dx}\ge \lambda_{1,\beta}(B),
\end{multline*}
the last inequality following from \cite[Theorem 3]{BucGiac}, so that \eqref{eq:faber} holds true.
}
\end{remark}

\begin{remark}[\bf Scaling property and monotonicity under dilations]
\label{rem:scaling}
{\rm
The standard scaling property for $\lambda_{k,\beta}$ extends readily to $\lkb$: for $t>0$ we have
$$
\lkb(t\Omega,t\Gamma)=\frac{1}{t^2}\tilde{\lambda}_{k,t\beta}(\Omega,\Gamma).
$$
In particular, we have that 
$$
\lkb(t\Omega,t\Gamma)<\tilde{\lambda}_{k,\beta}(\Omega,\Gamma)
$$ 
for every $t>1$: we will refer to this inequality as the monotonicity under dilation of $\lkb$.
}
\end{remark}

\begin{remark}[\bf Spectrum of a disjoint union]
\label{rem:disjoint}
{\rm
Let $(\Om_1,\Gamma_1), (\Om_2,\Gamma_2)\in \as(\R^N)$ with $\Omega_1^1\cap \Omega_2^1=\emptyset$ and $\Omega_1^1,\Omega_2^1$ lying at positive distance. Then, using the weak equation for the eigenfunctions, it is readily seen that the spectrum of $(\Omega_1\cup\Omega_2,\Gamma_1\cup\Gamma_2)$ is the union of the spectra of $(\Omega_1,\Gamma_1)$ and $(\Omega_2,\Gamma_2)$. In particular we have the standard formula
$$
\lkb(\Omega_1\cup\Omega_2,\Gamma_1\cup\Gamma_2)=\min_{i=0,\ldots,k}\max\left\{\tilde\lambda_{i,\beta}(\Omega_1,\Gamma_1),\tilde\lambda_{k-i,\beta}(\Omega_2,\Gamma_2)\right\},
$$
where we assume $\tilde{\lambda}_{0,\beta}(\Omega_j,\Gamma_j):=0$.
}
\end{remark}

The abstract Hilbert space formulation of the problem yields the generalization to the present setting of the classical result on the boundedness of eigenfunctions. 

\begin{theorem}[\bf Boundedness of the eigenfunctions]
\label{th:boundeigen}
Let $(\Om,\Gamma)\in \as(\R^N)$, and let $\tilde\lambda>0$ be an eigenvalue for \eqref{eq:rob}. Then, there exists a positive constant $C
$, depending only on $N$, $|\Om|$ and $\beta$, such that for every ($L^2$-normalized) eigenfunction $u$ for $\tilde\lambda$ it holds
\begin{equation}
\label{eq:uinfty}
\|u\|_\infty\le C \tilde\lambda^N.
\end{equation}
\end{theorem}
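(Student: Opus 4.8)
The plan is to establish \eqref{eq:uinfty} via a De Giorgi-type iteration on truncations of the eigenfunction, exploiting the fact that $\Theta(\Om,\Gamma)$ is stable under truncation and that for functions supported in a set of finite measure we have the Sobolev-type embedding \eqref{eq:est2dd-1} with exponent $\frac{2N}{N-1}$. The crucial structural point is that the boundary term in the bilinear form $a_\beta$ is \emph{sign-favorable}: for any truncation level $t>0$, testing the weak eigenvalue equation against $(u-t)^+$ (suitably symmetrized between the two traces) produces a boundary contribution with a good sign, so it can simply be discarded. This is exactly the mechanism that makes Robin problems behave, for $L^\infty$ bounds, like Neumann problems.

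First I would reduce to $u\ge 0$ by splitting $u=u^+-u^-$ and noting that both $u^+,u^-\in\Theta(\Om,\Gamma)$ (truncation stability) and each satisfies the relevant differential inequality; it suffices to bound $\|u^+\|_\infty$. Next, fix $t>0$ and set $v_t:=(u-t)^+\in\Theta(\Om,\Gamma)$. Using $v_t$ as a test function in $a_\beta(u,v_t)=\tilde\lambda\int_\Om u v_t\,dx$, one checks that $\int_\Om\nabla u\cdot\nabla v_t\,dx=\int_{\{u>t\}}|\nabla u|^2\,dx=\int_{\R^N}|\nabla v_t|^2\,dx$, that the trace terms $\beta\int(\gamma_l(u)\gamma_l(v_t)+\gamma_r(u)\gamma_r(v_t))\,d\hn\ge 0$ (since $\gamma_{l/r}(v_t)=(\gamma_{l/r}(u)-t)^+$ and $\gamma_{l/r}(u)\gamma_{l/r}(v_t)\ge (\gamma_{l/r}(v_t))^2\ge 0$ on $\{u>t\}$), and that $\int_\Om u v_t\,dx\le \int_\Om (v_t+t)v_t\,dx$. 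Hence
$$
\int_{\R^N}|\nabla v_t|^2\,dx\le \tilde\lambda\int_\Om (v_t^2+t v_t)\,dx.
$$
Combining this with \eqref{eq:est2dd-1} applied to $v_t$ (which is in $\Theta(\Om,\Gamma)$ and supported in $\{u>t\}\subseteq\Om$), and controlling the right-hand side by Hölder's inequality on the super-level set $A_t:=\{u>t\}$, gives an inequality of the form
$$
\|v_t\|_{L^{2^*}(A_t)}^2\le C\,\tilde\lambda\,\big(\|v_t\|_{L^{2^*}(A_t)}^2\,|A_t|^{2/N}+ t^2 |A_t|\big),
$$
with $2^*=\frac{2N}{N-1}$ and $C=C(N,|\Om|,\beta)$. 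For $t$ large enough that $|A_t|$ is small (which happens once $t\ge t_0$ with $t_0$ controlled, via Chebyshev and the $L^2$-normalization, by a power of $\tilde\lambda$), the first term on the right is absorbed, leaving
$$
\|v_t\|_{L^{2^*}(A_t)}^2\le C\,\tilde\lambda\, t^2 |A_t|.
$$

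This is the standard starting point for the De Giorgi iteration. Introducing levels $t_k=t_0(2-2^{-k})$ and $a_k:=\int_{A_{t_k}}(u-t_k)^{2^*/2}\cdots$ — more precisely tracking the quantities $|A_{t_k}|$ or $\int_{A_{t_k}}v_{t_k}\,dx$ — one shows via the usual interpolation $|A_{t_{k+1}}|\le (t_{k+1}-t_k)^{-2^*}\|v_{t_k}\|_{L^{2^*}}^{2^*}$ and the inequality above that the sequence satisfies a recursive estimate $y_{k+1}\le C^k M\, y_k^{1+\delta}$ for suitable $\delta>0$ depending on $N$; the fast-geometric-convergence lemma (e.g. \cite[Lemma B.1]{Struwe}-type, or the elementary version in \cite{LadyUraltseva}) then forces $y_k\to 0$ provided $y_0$ is below an explicit threshold, which is arranged by choosing $t_0$ equal to a suitable power of $\tilde\lambda$ times a constant depending only on $N,|\Om|,\beta$. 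Bookkeeping the exponents — the loss in each Sobolev step is $2/N$ and one starts from the $L^2$-normalization — yields the clean bound $\|u\|_\infty\le C\tilde\lambda^N$.

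The main obstacle is not the iteration scheme, which is classical, but verifying rigorously that the truncated functions remain in $\Theta(\Om,\Gamma)$ with the expected trace behaviour $\gamma_{l/r}((u-t)^+)=(\gamma_{l/r}(u)-t)^+$, and that the chain rule / integration-by-parts identities used in testing are valid in this $SBV$ setting with jump set on $\partial^*\Om\cup\Gamma$ rather than a smooth boundary. These follow from the $BV$ chain rule \cite[Theorem 3.96]{AFP} (already invoked in the proof of Lemma \ref{lem:L2}) and the pointwise Lebesgue-value characterization of traces \cite[Theorem 3.77]{AFP}, but they must be handled carefully; once they are in place, the sign of the boundary term is immediate and the rest is the standard De Giorgi machinery with explicit tracking of the dependence on $\tilde\lambda$.
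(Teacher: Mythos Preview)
Your overall strategy is sound and close to the paper's, but there is one genuine slip: you say the boundary term $\beta\int[\gamma_l(u)\gamma_l(v_t)+\gamma_r(u)\gamma_r(v_t)]\,d\hn$ ``can simply be discarded'', and then in the very next step you invoke \eqref{eq:est2dd-1} for $v_t$. That embedding requires control of $\|\gamma_{l/r}(v_t)\|_{L^2(\partial^*\Om\cup\Gamma)}$, which you no longer have once the boundary term is thrown away; the gradient alone does not control the $L^{2N/(N-1)}$ norm of an $SBV$ function with jumps. The fix is immediate and you essentially wrote it down already: from $\gamma_{l/r}(u)\gamma_{l/r}(v_t)\ge \gamma_{l/r}(v_t)^2$ you should \emph{keep} the boundary contribution as a lower bound and move it to the left, obtaining
\[
\int_{\R^N}|\nabla v_t|^2\,dx+\beta\int_{\partial^*\Om\cup\Gamma}[\gamma_l^2(v_t)+\gamma_r^2(v_t)]\,d\hn\le \tilde\lambda\int_\Om (v_t^2+t v_t)\,dx,
\]
which is precisely what feeds into \eqref{eq:est2dd-1}. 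This is exactly the paper's inequality \eqref{eq:lim1}.

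From that common starting point the two arguments diverge only in style. The paper does not run a discrete De Giorgi iteration; instead it derives, for $h$ above an explicit threshold $h_0\sim\tilde\lambda^N$, the pointwise inequality $\|u_h\|_{L^1}\le C h\sqrt{\tilde\lambda}\,|A_h|^{1+1/(2N)}$, sets $g(h):=\|u_h\|_{L^1}$, uses $g'(h)=-|A_h|$, and integrates the resulting differential inequality from $h_0$ to $\|u\|_\infty$ to read off the bound directly. Your De Giorgi scheme with levels $t_k=t_0(2-2^{-k})$ and the fast-convergence lemma reaches the same conclusion with the same exponent; the paper's continuous/ODE version is arguably cleaner for extracting the explicit power $\tilde\lambda^N$, while your route is the more familiar textbook machinery. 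Either is acceptable once the boundary term is handled correctly.
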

\begin{proof}
Let $u$ be an eigenfunction for $\tilde\lambda$, i.e., $u\not=0$ and
\begin{equation}
\label{eq:dual2}
\forall v\in \Theta(\Om,\Gamma)\,:\, a_\beta(u,v)=\tilde\lambda (u,v)_{L^2(\Om)}.
\end{equation}
For every $0<h<\|u\|_\infty$, the function
$$u_h:=(u-h)_+-(u+h)_-\in\Theta(\Omega,\Gamma)$$
is not identically null. Using $u_h$ as a test in equation \eqref{eq:dual2} we get
$$\int_{\Omega}\nabla u\cdot\nabla u_h\:dx+\beta\int_{\partial^*\Omega\cup\Gamma}[\gamma_l(u)\gamma_l(u_h)+\gamma_r(u)\gamma_r(u_h)]\:d\hn=\tilde\lambda\int_\Omega u\, u_h\:dx.$$
Since $u_h$ is supported in $A_h:=\{|u|>h\}$ and $\nabla u_h=\nabla u$ on $A_h$ one gets
\begin{align*}
\int_{\Omega}|\nabla u_h|^2\:dx&+\beta\int_{(\partial^*\Omega\cup\Gamma)\cap\{u>h\}}[\gamma_l(u)\gamma_l(u-h)+\gamma_r(u)\gamma_r(u-h)]\:d\hn\\
&+\beta\int_{(\partial^*\Omega\cup\Gamma)\cap\{u<-h\}}[\gamma_l(u)\gamma_l(u+h)+\gamma_r(u)\gamma_r(u+h)]\:d\hn\\
&=\tilde\lambda\int_{\{u>h\}} u(u-h)\:dx+\tilde\lambda\int_{\{u<-h\}} u(u+h)\:dx.
\end{align*}
Consequently, we obtain
\begin{equation}\label{eq:lim1}
\begin{split}
\int_{\Omega}|\nabla u_h|^2\:dx&+\beta\int_{\partial^*\Omega\cup\Gamma}[\gamma_l^2(u_h)+\gamma_r^2(u_h)]\:d\hn\\
&\le\tilde\lambda\int_{\Omega} u_h^2\:dx+\tilde\lambda h\int_{\Omega} |u_h|\:dx\\
&\le\tilde\lambda\int_{\Omega} u_h^2\:dx+\frac{\tilde\lambda}{2}\left[\int_{\Omega}u_h^2\:dx+h^2|A_h|\right]\\
&=\frac{3\tilde\lambda}{2}\int_{\Omega}u_h^2\:dx+h^2\frac{\tilde\lambda}{2}|A_h|.
\end{split}
\end{equation}
In view of H\"older inequality and of Remark \ref{rem:2dd-1} we get
\begin{equation}\label{eq:lim2}
\begin{split}
\|u_h\|_{L^2(\Omega)}&\le \|u_h\|_{L^\frac{2N}{N-1}(\Omega)} |A_h|^{\frac{1}{2N}}\\
&\le  C_1\left[\int_{\Omega}|\nabla u_h|^2\:dx+\beta\int_{\partial^*\Omega\cup\Gamma}[\gamma_l^2(u_h)+\gamma_r^2(u_h)]\:d\hn\right]^{\frac{1}{2}}|A_h|^{\frac{1}{2N}},
\end{split}
\end{equation}
so that plugging the estimate in \eqref{eq:lim1},
\begin{multline}
\label{eq:lim3}
\int_{\Omega}|\nabla u_h|^2\:dx+\beta\int_{\partial^*\Omega\cup\Gamma}[\gamma_l(u_h)^2+\gamma_r^2(u_h)]\:d\hn\\
\le \frac{3\tilde\lambda}{2} C_1^2\left[\int_{\Omega}|\nabla u_h|^2\:dx+\beta\int_{\partial^*\Omega\cup\Gamma}[\gamma_l(u_h)^2+\gamma_r^2(u_h)]\:d\hn\right]|A_h|^{\frac{1}{N}}+h^2\frac{\tilde\lambda}{2}|A_h|.
\end{multline}
Here $C_1$ depends on $N$, $|\Om|$ and $\beta$.
\par

Let us consider $h_0>0$ such that 
$$
\frac{3\tilde\lambda}{2}C_1^2 \left(\frac{1}{h_0}\int_{\Om}|u|\,dx\right)^{\frac{1}{N}}=\frac{1}{2}.
$$
If $\|u\|_\infty \le h_0$, then \eqref{eq:uinfty} immediately follows. Let us thus assume that $\|u\|_\infty>h_0$. Then for $h_0\le h<\|u\|_\infty$ we have
$$
|A_h|\le \frac{1}{h}\int_\Om |u|\,dx
$$
so that in view of the choice of $h_0$ the first term in the right hand side of \eqref{eq:lim3} can be absorbed by the left hand side leading to 
\begin{equation}\label{eq:lim4}
\int_{\Omega}|\nabla u_h|^2\:dx+\beta\int_{\partial^*\Omega\cup\Gamma}[\gamma_l(u_h)^2+\gamma_r(u_h)^2]\:d\hn\le h^2\tilde\lambda|A_h|.
\end{equation}
In view of \eqref{eq:lim2} we get
$$
\|u_h\|_{L^2(\Omega)}\le C_1 h\sqrt{\tilde\lambda}|A_h|^{\frac 12+\frac{1}{2N}},
$$
so that by applying H\"older inequality we deduce the key estimate
\begin{equation}\label{eq:lim5}
\|u_h\|_{L^1(\Omega)}\le C_1 h\sqrt{\tilde\lambda}|A_h|^{1+\frac{1}{2N}}.
\end{equation}
By setting
$$
g(h):=\int_\Omega|u_h|\:dx=\|u_h\|_{L^1(\Omega)},
$$
one has $g'(h)=-|A_h|$ for a.e. $h>0$ and \eqref{eq:lim5} becomes
$$
g(h)\le C_1 h\sqrt{\tilde\lambda}(-g'(h))^{1+\frac{1}{2N}},
$$
so that
$$
\frac{1}{h^{1-\frac{1}{2N+1}}}\le- (C_1\sqrt{\tilde\lambda})^{\frac{2N}{2N+1}} \frac{g'(h)}{g(h)^{1-\frac{1}{2N+1}}}.
$$
This last inequality shows that $\|u\|_\infty<+\infty$. Indeed, if this is not the case, integrating from $h_0$ to $+\infty$, we have that the left hand side diverges, but the right hand side converges.
\par
Integrating from $h_0$ to $\|u\|_\infty$ we deduce 
$$
\|u\|_\infty^{\frac{1}{2N+1}}-h_0^{\frac{1}{2N+1}}\le (C_1\sqrt{\tilde\lambda})^{\frac{2N}{2N+1}} \|u\|_1^{\frac{1}{2N+1}},
$$
from which inequality \eqref{eq:uinfty} follows. The proof is thus concluded.
\end{proof}

\section{The main results}
\label{sec:main}
Being interested in a perimeter constraint for our optimization problem, we need to generalize the notion of perimeter to admissible configurations in $\as(\R^N)$.

\begin{definition}[\bf Generalized perimeter]
\label{def:robinperimeter}
For every $(\Omega,\Gamma)\in \as(\R^N)$ we set
$$
\Prob{\Omega}{\Gamma}:=Per(\Omega)+2\hn(\Gamma),
$$
where $Per(\Om)$ denotes the usual perimeter of $\Om$ in $\R^N$.
\end{definition}

\begin{remark}
{\rm
The definition of the generalized perimeter for the configuration $(\Om,\Gamma)$ is based on the idea that the inner crack $\Gamma$ is seen as a degenerated hole or a degenerated inner fold of the outer boundary, so that its contribution to the perimeter is given by twice its surface $\hn(\Gamma)$.
}
\end{remark}

The first main result of the paper is the following.

\begin{theorem}
\label{th:main1}
Let $p>0$. The problem
\begin{equation}\label{eq:jumprobinconst}
\min\left\{\lkb(\Omega,\Gamma): (\Omega,\Gamma)\in\mathcal{A}(\R^N) \text{ with }\Prob{\Omega}{\Gamma}=p\right\}
\end{equation}
is well posed. Moreover every minimizer $(\Omega,\Gamma)$ is such that $\Omega$ is bounded and
\begin{equation}
\label{eq:den1}
\lkb(\Omega,\Gamma)=\inf \{\lambda_{k,\beta}(A)\,:\, \text{$A\subset \R^N$ is a bounded Lipschitz open set with $Per(A)=p$}\}.
\end{equation}
\end{theorem}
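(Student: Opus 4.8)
The plan is to use the direct method of the Calculus of Variations. I would split the argument into three parts: (A) existence of a minimizer; (B) boundedness of $\Omega$ for any minimizer; (C) the density formula \eqref{eq:den1}.

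For part (A), let $(\Omega_n,\Gamma_n)$ be a minimizing sequence, so $\widetilde{Per}(\Omega_n,\Gamma_n)=p$ and $\lkb(\Omega_n,\Gamma_n)\to\inf$. First I would reduce to configurations of controlled size: since the perimeter is fixed and, by the generalized Faber–Krahn inequality of Remark \ref{rem:faber}, $\lkb(\Omega_n,\Gamma_n)\ge\tilde\lambda_{1,\beta}(\Omega_n,\Gamma_n)\ge\lambda_{1,\beta}(B_n)$ with $|B_n|=|\Omega_n|$, an upper bound on $\lkb$ along the minimizing sequence forces a lower bound on $|\Omega_n|$; combined with the isoperimetric inequality this gives $|\Omega_n|\le C$. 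To gain local compactness I would translate the connected "pieces" of $\Omega_n$ so that the bulk of the mass (and of the eigenfunctions) stays in a fixed large ball $D$ — here one uses the standard concentration-compactness / decomposition argument (as in \cite{BucGiac-lambdak}), exploiting Remark \ref{rem:disjoint} to handle splitting into well-separated components, so that after relabeling one may assume $\Omega_n\subset D$ and $\Gamma_n\subset D^1\supseteq\Omega_n^1$. Then $1_{\Omega_n}$ is bounded in $BV$, so up to a subsequence $1_{\Omega_n}\to 1_\Omega$ in $L^1$ with $Per(\Omega)\le\liminf Per(\Omega_n)$; and by Theorem \ref{thm:sigma2} the cracks $\Gamma_n$ $\sigma^2$-converge to a rectifiable set $\Gamma$ with $\hn(\Gamma)\le\liminf\hn(\Gamma_n)$. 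A technical point is that $\partial^*\Omega_n\cup\Gamma_n$ should be handled as a single $\sigma^2$-converging sequence (the perimeter part already converges in a suitable sense), and one must check $\Gamma\subseteq\Omega^1$ up to $\hn$-null sets; this should follow from property (a) of $\sigma^2$-convergence applied to truncations of $1_{\Omega_n}$. Lower semicontinuity of $\lkb$ then comes by taking $L^2$-normalized eigenfunctions $u_1^n,\dots,u_k^n$ of $(\Omega_n,\Gamma_n)$ spanning the optimal space, using Theorem \ref{th:boundeigen} for a uniform $L^\infty$ bound, extracting $SBV$ limits $u_1,\dots,u_k$ (Ambrosio's compactness) with $J_{u_j}\tsub\partial^*\Omega\cup\Gamma$ by condition (a), checking linear independence survives the limit (via $L^2$-orthonormality, which is preserved by the strong $L^2$ convergence), and applying lower semicontinuity of the Dirichlet and boundary parts of $R_\beta$ on the finite-dimensional span — giving $\lkb(\Omega,\Gamma)\le\liminf\lkb(\Omega_n,\Gamma_n)$. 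Finally, if $\widetilde{Per}(\Omega,\Gamma)<p$ one rescales by $t>1$ to restore the constraint and, by monotonicity under dilation (Remark \ref{rem:scaling}), strictly decreases $\lkb$ — contradicting minimality unless $\widetilde{Per}(\Omega,\Gamma)=p$; so $(\Omega,\Gamma)$ is admissible and optimal.

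For part (B), boundedness of $\Omega$ for a minimizer: if $\Omega$ were unbounded, since it has finite measure and finite perimeter one could remove a far-away piece of small measure and small perimeter; I would argue that this operation (possibly translating the removed piece far away, or deleting it) combined with a dilation to restore the perimeter strictly decreases $\lkb$, again using Remark \ref{rem:scaling} and the fact that deleting a low-eigenvalue-irrelevant component does not increase $\lkb$ (Remark \ref{rem:disjoint}). More directly, the compactness reduction in part (A) already produces a bounded competitor with no larger eigenvalue and the same perimeter, so every minimizer may be taken bounded; to show *every* minimizer is bounded one uses a truncation-in-space argument: restrict the optimal eigenfunctions to a large ball $B_R$, estimate the error in the Rayleigh quotients, and rescale — the net effect is a strict decrease unless $\Omega\subseteq B_R$.

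For part (C), the density formula: the inequality $\lkb(\Omega,\Gamma)\le\inf\{\lambda_{k,\beta}(A)\}$ is immediate because for any bounded Lipschitz $A$ with $Per(A)=p$ we have $(A,\emptyset)\in\as(\R^N)$, $\widetilde{Per}(A,\emptyset)=p$, and $\lkb(A,\emptyset)=\lambda_{k,\beta}(A)$, so the minimum \eqref{eq:jumprobinconst} is $\le\lambda_{k,\beta}(A)$. For the reverse inequality, given a minimizer $(\Omega,\Gamma)$ I would approximate it by smooth (Lipschitz) open sets: apply Proposition \ref{pro:comitorres} to get an interior smooth approximation of $\Omega$ by $A_{k,t}$, and Theorem \ref{teo:cor_toa} to the (bounded, $L^\infty$, $SBV$) eigenfunctions $u_1,\dots,u_k$ to replace the crack $\Gamma$ by polyhedral jump sets and regularize — producing Lipschitz domains $A_m$ (obtained by thickening/cutting along the approximating polyhedral surfaces, so that the "doubled" crack becomes genuine boundary of area $\to 2\hn(\Gamma)$) together with test functions $u_j^m\in W^{1,2}(A_m)$ whose Rayleigh quotients converge to those of $u_j$. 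The upper-semicontinuity inequality for the jump (boundary) term in Theorem \ref{teo:cor_toa}, with $\varphi(x,a,b,\nu)=\beta(a^2+b^2)$, is exactly what is needed to control $\beta\int_{\partial^*\Omega\cup\Gamma}[\gamma_l^2+\gamma_r^2]$ in the limit, while the strong $L^q$ convergence of gradients handles the Dirichlet term. One then has $\limsup_m\lambda_{k,\beta}(A_m)\le\max$ of the Rayleigh quotients on $\mathrm{span}\{u_1,\dots,u_k\}=\lkb(\Omega,\Gamma)$, after a final rescaling of $A_m$ to enforce $Per(A_m)=p$ exactly (which only costs a vanishing factor, by continuity of both perimeter and $\lambda_{k,\beta}$ under scaling). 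Hence $\inf\{\lambda_{k,\beta}(A)\}\le\lkb(\Omega,\Gamma)$, and combined with the trivial inequality this gives \eqref{eq:den1}.

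**Main obstacle.** I expect the genuinely delicate step to be the compactness of the cracks in part (A) and the associated lower semicontinuity of the full boundary term $\int_{\partial^*\Omega\cup\Gamma}[\gamma_l^2(u)+\gamma_r^2(u)]\,d\hn$: one must simultaneously pass to the limit in $\partial^*\Omega_n$ (lower semicontinuity of perimeter, $L^1$ convergence of $1_{\Omega_n}$), in $\Gamma_n$ ($\sigma^2$-convergence, Theorem \ref{thm:sigma2}), in the eigenfunctions ($SBV$ compactness with $J_{u_j}\tsub\partial^*\Omega\cup\Gamma$ forced by condition (a) of $\sigma^2$-convergence), and in the quadratic trace functional — all while keeping track that the limiting crack still lies in $\Omega^1$ and that the limiting eigenfunctions remain $k$ linearly independent with the correct normalization. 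Reconciling the two different notions of convergence for the two pieces of $\partial^*\Omega\cup\Gamma$ into a single framework is the crux; the rest is a careful but standard application of the direct method together with the approximation theorems already recalled.
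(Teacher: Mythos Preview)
Your outline for parts (B) and (C) is essentially aligned with the paper: boundedness of minimizers is obtained in the paper by a cutting lemma (your ``truncation-in-space'' idea) yielding a differential inequality for $t\mapsto |\Omega\setminus\Omega_t|$, and the density formula is proved exactly via Proposition~\ref{pro:comitorres} and Theorem~\ref{teo:cor_toa} applied to the eigenfunctions, as you suggest.

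The genuine gap is in part (A). You write that after a concentration-compactness argument and ``relabeling'' one may assume $\Omega_n\subset D$ for a fixed bounded $D$. This is not correct: in the dichotomy case the minimizing sequence genuinely splits into two far-apart pieces $(\Omega_{n,1},\Gamma_{n,1})$ and $(\Omega_{n,2},\Gamma_{n,2})$ carrying perimeters $p_1,p_2$ with $p_1+p_2\le p$, and one cannot simply translate the pieces together and keep a minimizing sequence for $\lkb$. The paper's key structural device, which your proposal omits, is an \emph{induction on $k$}: for $k=1$ balls are optimal by Faber--Krahn; assuming existence for all $j<k$, in the dichotomy case one uses Remark~\ref{rem:disjoint} to write
\[
\lkb(\Omega_{n,1}\cup\Omega_{n,2},\Gamma_{n,1}\cup\Gamma_{n,2})=\max\{\tilde\lambda_{\bar i,\beta}(\Omega_{n,1},\Gamma_{n,1}),\,\tilde\lambda_{k-\bar i,\beta}(\Omega_{n,2},\Gamma_{n,2})\}
\]
for some $0<\bar i<k$, and then invokes the induction hypothesis to produce \emph{bounded} minimizers $(\Omega_1,\Gamma_1)$, $(\Omega_2,\Gamma_2)$ for $\tilde\lambda_{\bar i,\beta}$ and $\tilde\lambda_{k-\bar i,\beta}$ under constraints $p_1$ and $p_2$; their disjoint union is then a minimizer for $\lkb$. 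Without this induction you have no mechanism to close the argument when dichotomy occurs, and the claim that one may assume $\Omega_n\subset D$ is precisely what fails. (Incidentally, what you flag as the ``main obstacle'' --- lower semicontinuity of the boundary term along $\sigma^2$-convergence --- is handled in the paper as a separate technical theorem and is not the structural crux; the induction is.)
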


\begin{remark}[\bf The case $k=1$]
\label{rem:1ball}
{\rm
For $k=1$, thanks to the Faber-Krahn type inequality \eqref{eq:faber} and to the monotonicity of the eigenvalues under dilations, it is readily seen that the only minimizers of both problems are the couples $(B,\emptyset)$, where $B$ are balls.
}
\end{remark}

Thanks to Theorem \ref{th:main1}, we can handle more general problems involving several eigenvalues at the same time. 
Let $\ell\in\N$, $\ell\ge 1$ and $f:]0,+\infty[^\ell \to ]0,+\infty[ $ be a Lipschitz continuous function such that the following items hold true.
\begin{itemize}
\item[$(f1)$] $f(y)\to+\infty$ as $|y|\to+\infty$;
\item[$(f2)$] There exists $C>0$ such that for every $y=(y_1,\ldots, y_\ell),y'=(y'_1,\ldots, y'_\ell)\in ]0,+\infty[^{\ell}$ with $y_i\ge y'_i$ for every $i=1,\ldots,\ell$ it holds
$$
f(y)\ge f(y')+C|y-y'|.
$$
\end{itemize}
A typical example for $f$ is given by $f(y):=y_1+\dots+y_\ell$ or more generally $f(y):=(y_1^p+\dots+y_\ell^p)^{\frac{1}{p}}$ with $p>1$.
\par 
Given $k_1,\ldots,k_l\in\N\setminus\{0\}$ let us set
\begin{equation*}
F(\Omega,\Gamma):=f\left(\tilde\lambda_{k_1,\beta}(\Omega,\Gamma),\ldots,\tilde\lambda_{k_l,\beta}(\Omega,\Gamma)\right).
\end{equation*}
The following result holds true.

\begin{theorem}
\label{thm:main3}
Let $p>0$, and let $f:]0,+\infty[^\ell \to ]0,+\infty[$ satisfy $(f1)$ and $(f2)$.  Then the problem
\begin{equation}
\label{eq:pb-gen}
\min\left\{F(\Omega,\Gamma): (\Omega,\Gamma)\in\mathcal{A}(\R^N) \text{ with }\Prob{\Omega}{\Gamma}=p\right\}
\end{equation}
is well posed. Moreover every minimizer $(\Omega,\Gamma)$ is such that $\Omega$ is bounded
and\begin{equation}
\label{eq:den-gen}
F(\Omega,\Gamma)=\inf \{F(A,\emptyset)\,:\, \text{$A\subset \R^N$ is a bounded Lipschitz open set with $Per(A)=p$}\}.
\end{equation}
\end{theorem}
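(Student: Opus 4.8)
The plan is to rerun the proof of Theorem \ref{th:main1}, with the single eigenvalue $\tilde\lambda_{k,\beta}$ replaced throughout by $F$, using hypotheses $(f1)$ and $(f2)$ to recover at each step the properties of the eigenvalue that were used there.

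\emph{Existence.} Let $m$ denote the infimum in \eqref{eq:pb-gen}, which is finite since one may test with a suitably dilated ball, and let $(\Om_n,\Gamma_n)\in\as(\R^N)$ with $\Prob{\Om_n}{\Gamma_n}=p$ and $F(\Om_n,\Gamma_n)\to m$. By the coercivity hypothesis $(f1)$, the numbers $\tilde\lambda_{k_i,\beta}(\Om_n,\Gamma_n)$ ($i=1,\dots,\ell$), and hence also $\tilde\lambda_{1,\beta}(\Om_n,\Gamma_n)$, stay bounded; from here the extraction of a limit configuration is exactly as in Theorem \ref{th:main1}. Namely, the perimeter bound and the isoperimetric inequality bound $|\Om_n|$ from above, while the Faber-Krahn inequality \eqref{eq:faber} together with the scaling of Remark \ref{rem:scaling} keep $|\Om_n|$ away from $0$ and allow one to reduce to the case $\Om_n,\Gamma_n\subset D$ for some fixed bounded open set $D$, without changing the generalized perimeter nor the spectrum (Remark \ref{rem:disjoint}). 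Local compactness of sets of finite perimeter, Theorem \ref{thm:sigma2} for the cracks, the uniform $L^\infty$ bound on eigenfunctions (Theorem \ref{th:boundeigen}), and the lower semicontinuity results of Section \ref{sec:tech} then furnish $(\Om,\Gamma)\in\as(\R^N)$ with $\Prob{\Om}{\Gamma}\le p$ and
\[
\tilde\lambda_{k_i,\beta}(\Om,\Gamma)\le\liminf_{n\to+\infty}\tilde\lambda_{k_i,\beta}(\Om_n,\Gamma_n),\qquad i=1,\dots,\ell.
\]
Since $f$ is continuous and, by $(f2)$, nondecreasing in each variable, passing to a subsequence along which every $\tilde\lambda_{k_i,\beta}(\Om_n,\Gamma_n)$ converges yields $F(\Om,\Gamma)\le m$. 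If $\Prob{\Om}{\Gamma}=q<p$ (with $q>0$, since boundedness of $\tilde\lambda_{1,\beta}$ and Faber-Krahn keep $|\Om|$ positive), dilate by $t:=(p/q)^{1/(N-1)}>1$: then $\Prob{t\Om}{t\Gamma}=p$ and, by Remark \ref{rem:scaling}, each $\tilde\lambda_{k_i,\beta}$ \emph{strictly} decreases, so the strict monotonicity in $(f2)$ gives $F(t\Om,t\Gamma)<F(\Om,\Gamma)\le m$, a contradiction. Hence $\Prob{\Om}{\Gamma}=p$ and $(\Om,\Gamma)$ is a minimizer; its boundedness follows by the same reasoning as in Theorem \ref{th:main1}.

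\emph{The relaxation identity \eqref{eq:den-gen}.} The inequality ``$\le$'' is immediate: for every bounded Lipschitz open $A$ with $Per(A)=p$ we have $(A,\emptyset)\in\as(\R^N)$, $\Prob{A}{\emptyset}=p$ and $\tilde\lambda_{k_i,\beta}(A,\emptyset)=\lambda_{k_i,\beta}(A)$, whence $F(\Om,\Gamma)\le F(A,\emptyset)$ by minimality, and we pass to the infimum. For ``$\ge$'', fix a minimizer $(\Om,\Gamma)$ and apply the approximation procedure used to establish \eqref{eq:den1} in Theorem \ref{th:main1} (built on Theorem \ref{teo:cor_toa} and Proposition \ref{pro:comitorres}): it produces bounded Lipschitz open sets $A_n$ with $Per(A_n)\to p$ such that, for the \emph{same} sequence and every index $j$,
\[
\limsup_{n\to+\infty}\lambda_{j,\beta}(A_n)\le\tilde\lambda_{j,\beta}(\Om,\Gamma),
\]
the decisive point being that a single good approximation of $(\Om,\Gamma)$ controls the whole spectrum at once. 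Replacing $A_n$ by $(p/Per(A_n))^{1/(N-1)}A_n$ restores $Per=p$ exactly, and since the dilation factor tends to $1$ the monotonicity under dilation of Remark \ref{rem:scaling} (together with the continuous dependence on $\beta$) preserves the $\limsup$ bounds. Then continuity and monotonicity of $f$ give
\[
\inf_A F(A,\emptyset)\le\limsup_{n\to+\infty}F(A_n,\emptyset)=\limsup_{n\to+\infty}f\big(\lambda_{k_1,\beta}(A_n),\dots,\lambda_{k_\ell,\beta}(A_n)\big)\le F(\Om,\Gamma),
\]
which is the reverse inequality, so that \eqref{eq:den-gen} holds.

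\emph{Main obstacle.} The genuinely delicate point is the ``$\ge$'' half of \eqref{eq:den-gen}: one must approximate the possibly cracked configuration $(\Om,\Gamma)$ by honest Lipschitz domains whose perimeter tends to $p$ and whose \emph{entire} relevant spectrum is asymptotically dominated by that of $(\Om,\Gamma)$. This is precisely the technical heart already required for Theorem \ref{th:main1}, and it transfers here without change because the approximation is carried out spectrum-wide. Everything else becomes routine once $(f1)$ (coercivity, providing compactness of minimizing sequences) and $(f2)$ (monotonicity, providing lower semicontinuity of $F$ and the strict decrease under the perimeter-restoring dilation) are in place.
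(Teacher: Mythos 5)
Your plan for the relaxation identity \eqref{eq:den-gen} is essentially the paper's: the $\le$ direction is a straightforward test, and the $\ge$ direction uses the approximation of Theorem~\ref{thm:den} together with Remark~\ref{rem:approx-lek} (a single approximating sequence controls the whole relevant spectrum) and then the monotonicity and Lipschitz continuity of $f$. That part of the proposal is sound.

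The existence argument, however, has a genuine gap. You claim that Faber--Krahn and scaling ``allow one to reduce to the case $\Om_n,\Gamma_n\subset D$ for some fixed bounded open set $D$'' and that ``from here the extraction of a limit configuration is exactly as in Theorem~\ref{th:main1}.'' Neither claim is justified. A minimizing sequence $\Om_n$ of sets of finite perimeter in $\R^N$ can spread its mass over arbitrarily large regions; there is no a~priori reduction to a fixed bounded container (boundedness is a property you can hope to prove for a \emph{minimizer}, after existence is established --- you cannot impose it on a minimizing sequence). This is exactly why the paper, both for $\lkb$ and for $F$, runs a concentration--compactness argument on $(1_{\Om_n})_n$ with three alternatives: vanishing, dichotomy, compactness. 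Your sketch silently assumes compactness.

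The dichotomy case is where the multi-eigenvalue structure of $F$ genuinely complicates matters, and your phrase ``exactly as in Theorem~\ref{th:main1}'' papers over it. If the minimizing sequence splits into two far-apart pieces $(\Om_{n,1},\Gamma_{n,1})$ and $(\Om_{n,2},\Gamma_{n,2})$ with perimeters $\bar p_1+\bar p_2=\bar p$, then the eigenvalues $\tilde\lambda_{k_i,\beta}$ of the union are, by Remark~\ref{rem:disjoint}, interleavings of the two spectra, so the value of $F$ mixes eigenvalues from both pieces. To close the induction one must minimize over piece~$1$ a functional that depends on some eigenvalues of piece~$1$ \emph{and} on constants (the limiting eigenvalues from piece~$2$); the induction hypothesis for the original functional $F$ does not apply. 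The paper resolves this by embedding \eqref{eq:pb-gen} into the larger class of problems \eqref{eq:pb-gen-gamma}, in which $k_{\ell,2}$ of the entries are frozen to prescribed numbers $\gamma_1,\dots,\gamma_{k_{\ell,2}}$, and then proving Theorem~\ref{thm:main3-gen} for that larger class by induction on $k_\ell$. Without this device (or some substitute for it), the dichotomy case does not close, and your proof of existence is incomplete.

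A secondary, smaller point: your boundedness-of-minimizers claim (``follows by the same reasoning'') also needs the monotonicity/Lipschitz hypotheses $(f1)$--$(f2)$ to convert the eigenvalue-level estimate of Lemma~\ref{lem:cut} into a perimeter-decay estimate for $F$, which is Step~1 of the paper's proof of Theorem~\ref{thm:main3-gen}; it is not literally ``the same'' as the single-eigenvalue computation, though it is an adaptation of it along the lines you envisage.
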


\begin{remark}[\bf Penalization of the perimeter]
\label{rem:penal}
{\rm
We can also treat the problem where the perimeter is involved as a penalization term, i.e.,
\begin{equation}\label{eq:jumprobinpenal-gen}
\min\left\{F(\Om,\Gamma)+\Lambda \Prob{\Omega}{\Gamma}:(\Omega,\Gamma)\in\mathcal{A}(\R^N)\right\},
\end{equation}
where $\Lambda>0$. Also problem \eqref{eq:jumprobinpenal-gen} is well posed, every minimizer is bounded, and the minimum value is given by the infimum of the values achieved on Lipschitz regular sets (see Remark \ref{rem:pen-proof}). 
}
\end{remark}

\begin{remark}
\label{rem:beta<0}
{\rm
In the case $\beta<0$, eigenvalues for regular domains are still well defined, and the interesting associated variational problem is their maximization, both for the perimeter and the measure constraint. As highlighted in \cite{Bucur-Cito,bucur2019sharp}, the problem can be framed within the class of measurable sets of finite perimeter, without taking into account inner cracks. The reason of that is the negative sign of the surface energies: roughly speaking, one can take the eigenfunctions for a couple $(\Omega,\emptyset)$ as test functions for $(\Omega,\Gamma)$, obtaining a lower value of the functional and so a worse competitor for the associated maximization. 
}
\end{remark}

%

\section{Compactness, lower semicontinuity and approximation results}
\label{sec:tech}
In this section we collect some technical results which are fundamental for our analysis: in particular we are interested in compactness properties of admissible configurations, lower semicontinuity results for the associated eigenvalues, and their approximation through regular sets.

\subsection{Compactness and lower semicontinuity properties}
The following result holds true.

\begin{theorem}[\bf Compactness and lower semicontinuity]
\label{thm:lsc}
Let $(\Omega_n,\Gamma_n)\in\mathcal{A}(\R^N)$ be such that 
\begin{equation}
\label{eq:boundProb}
\Prob{\Omega_n}{\Gamma_n}\le C 
\end{equation}
for some positive constant $C>0$ independent of $n$. Assume that 
\begin{equation}
\label{eq:s-omega_n}
1_{\Om_n}\to 1_\Om\qquad\text{strongly in }L^1(\R^N)
\end{equation}
for some set of finite perimeter $\Om\subset \R^N$. 
\par
Then there exists $\Gamma\subset \R^N$ rectifiable such that $(\Omega,\Gamma)\in\mathcal{A}(\R^N)$ and the following items hold true.
\begin{itemize}
\item[(i)] 
We have
\begin{equation}
\label{eq:lscboundary}
\Prob{\Omega}{\Gamma}\le\liminf_{n\to+\infty}\Prob{\Omega_n}{\Gamma_n}.
\end{equation}

\item[(ii)] If $u_n\in \Theta(\Om_n,\Gamma_n)$  with
\begin{equation}
\label{eq:bound-rob}
\int_{\Omega_n}|\nabla u_n|^2\:dx+\int_{\partial^*\Om\cup \Gamma_n}[\gamma_l^2(u_n)+\gamma_r^2(u_n)]\:d\hn\le C,
\end{equation}
then there exists $u\in \Theta(\Om,\Gamma)$ such that, up to subsequences,
$$
u_n\to  u\quad\text{strongly in $L^2(\R^N)$},
$$
$$
\nabla u_n\weak \nabla u\quad\text{weakly in $L^2(\R^N;\R^N)$},
$$
and
\begin{equation}
\label{eq:superficie}
\int_{\partial^*\Omega\cup\Gamma}[\gamma_l^2(u)+\gamma_r^2(u)]\:d\hn\le\liminf_{n\to+\infty}\int_{\partial^*\Omega_n\cup\Gamma_n}[\gamma_l^2(u_n)+\gamma_r^2(u_n)]\:d\hn.
\end{equation}
\end{itemize}
\end{theorem}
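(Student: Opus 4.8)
The strategy is the direct method combined with the two compactness tools already set up: the compactness of sets of finite perimeter under an $L^1$-bound on the perimeter, and the $\sigma^2$-compactness of rectifiable sets (Theorem \ref{thm:sigma2}). First I would localize everything in a fixed large ball. Since $1_{\Om_n}\to 1_\Om$ in $L^1(\R^N)$ and $Per(\Om_n)\le C$, the measures $|\Om_n|$ are uniformly bounded, so (after discarding finitely many indices and translating if necessary — but in fact the hypothesis already fixes the limit set $\Om$) one can find an open bounded set $D\subset\R^N$ containing $\Om_n^1$ for all $n$ in the tail; more precisely $\Gamma_n\subset\Om_n^1$ and, by a standard argument, the essential parts of the $\Om_n$ concentrate, so that for the purposes of extracting the crack one may regard $\Gamma_n\subset D$. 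Having this, $\hn(\Gamma_n)\le C/2$ by the generalized-perimeter bound, so Theorem \ref{thm:sigma2} applies: up to a subsequence $\Gamma_n\to\Gamma$ in the sense of $\sigma^2$-convergence for some rectifiable $\Gamma\subset D$, with $\hn(\Gamma)\le\liminf_n\hn(\Gamma_n)$.

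\textbf{Proof of (i).} Once $\Gamma$ is produced, I must check $(\Om,\Gamma)\in\as(\R^N)$, i.e. $\Gamma\tsub\Om^1$, and then establish the semicontinuity of the generalized perimeter. For the inclusion $\Gamma\tsub\Om^1$: by part (b) of the definition of $\sigma^2$-convergence there is $u\in SBV(D)$ with $\nabla u\in L^2$, $\|u\|_\infty\le C$, $J_u\teq\Gamma$, and a sequence $u_n\in SBV(D)$ with $J_{u_n}\tsub\Gamma_n\subset\Om_n^1$ converging to $u$ in $L^1$ with $\nabla u_n\weak\nabla u$ in $L^2$; multiplying the $u_n$ by $1_{\Om_n}$ (which is legitimate since $1_{\Om_n}$ is $BV$ with jump on $\partial^*\Om_n$, and $J_{u_n}\subset\Om_n^1$ so the product does not create new jumps inside $D\setminus\partial^*\Om_n$ beyond those of $u_n$) and using $1_{\Om_n}\to 1_\Om$ one sees that $u=u\,1_\Om$ a.e., whence $\Gamma\teq J_u$ is contained in $\Om^1$ up to $\hn$-null sets. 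For the semicontinuity of $\Prob{\cdot}{\cdot}$, I would use lower semicontinuity of the perimeter under $L^1$-convergence, $Per(\Om)\le\liminf_n Per(\Om_n)$, together with $2\hn(\Gamma)\le 2\liminf_n\hn(\Gamma_n)$ from Theorem \ref{thm:sigma2}; the mild subtlety is that the two $\liminf$'s must be combined, which is handled by passing to a further subsequence along which $\Prob{\Om_n}{\Gamma_n}$ converges and along which both $Per(\Om_n)$ and $\hn(\Gamma_n)$ converge separately, so that $\Prob{\Om}{\Gamma}\le\lim Per(\Om_n)+2\lim\hn(\Gamma_n)=\lim\Prob{\Om_n}{\Gamma_n}$.

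\textbf{Proof of (ii).} Given $u_n\in\Theta(\Om_n,\Gamma_n)$ with the bound \eqref{eq:bound-rob}, Lemma \ref{lem:L2} (and the stronger estimate in Remark \ref{rem:2dd-1}) gives a uniform bound for $\|u_n\|_{L^{2N/(N-1)}(\Om_n)}$ and hence for $\|u_n\|_{L^2}$; together with $\|\nabla u_n\|_{L^2}\le C$ and the jump bound, the $u_n$ are bounded in $BV(\R^N)$ and supported in a fixed ball, so up to a subsequence $u_n\to u$ in $L^1$, $\nabla u_n\weak\nabla u$ weakly in $L^2$, and $u_n\to u$ strongly in $L^2$ by interpolation (exactly as at the end of the proof of Proposition \ref{prop:hilbert}). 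By Ambrosio's compactness and lower semicontinuity theorem, $u\in SBV(\R^N)$ with $\nabla u$ as the absolutely continuous gradient and $u=0$ a.e. on $\Om^c$ (since each $u_n=0$ on $\Om_n^c$ and $1_{\Om_n}\to 1_\Om$). The crucial point — and the main obstacle — is to show $J_u\tsub\partial^*\Om\cup\Gamma$ and to obtain the trace estimate \eqref{eq:superficie}. The inclusion $J_u\tsub\partial^*\Om\cup\Gamma$ splits the jump set of $u$ into the part where it is forced by the jump of $1_\Om$ (which lands in $\partial^*\Om$) and the rest: since $J_{u_n}\tsub\partial^*\Om_n\cup\Gamma_n$, one truncates $u_n$ and argues with $v_n:=u_n 1_{D\setminus \overline{B}}$-type cut-offs or rather applies condition (a) of $\sigma^2$-convergence to the sequence $u_n 1_{\Om_n}=u_n$ restricted to the region away from $\partial^*\Om$; the technical care is needed because $\partial^*\Om_n$ need not $\sigma^2$-converge to $\partial^*\Om$, so one should instead work with $\Sigma_n':=\partial^*\Om_n\cup\Gamma_n$, extract a $\sigma^2$-limit $\Sigma'$ with $\hn(\Sigma')\le\liminf\hn(\Sigma_n')$, deduce $J_u\tsub\Sigma'$ from condition (a), and then identify the ``essential crack'' part of $\Sigma'$ not coming from $\partial^*\Om$ with $\Gamma$ — this identification, and the bookkeeping that keeps $\Gamma\subset\Om^1$ while absorbing the limit of $\partial^*\Om_n\setminus\partial^*\Om$ correctly, is the heart of the argument. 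Finally, for \eqref{eq:superficie}: the traces $\gamma_{l/r}(u_n)$ are bounded in $L^2(\Sigma_n')$, and the $\sigma^2$-framework (via the functions realizing the jump, as in condition (b)) yields a $\sigma^2$-type lower semicontinuity of $\int[\gamma_l^2+\gamma_r^2]\,d\hn$; concretely one writes the surface energy as a functional of $u_n$ of the form covered by the $SBV$ lower-semicontinuity theorems (Ambrosio / the joint lower semicontinuity for $\sigma^2$-convergence), using that $a\mapsto a^2$ is the relevant superadditive integrand, to conclude $\int_{\partial^*\Om\cup\Gamma}[\gamma_l^2(u)+\gamma_r^2(u)]\,d\hn\le\liminf_n\int_{\partial^*\Om_n\cup\Gamma_n}[\gamma_l^2(u_n)+\gamma_r^2(u_n)]\,d\hn$. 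The hardest part, to reiterate, is the careful extraction of $\Gamma$ so that it is simultaneously the $\sigma^2$-limit controlling all the jump sets, is contained in $\Om^1$, and makes both (i) and the inclusion in (ii) hold; the rest is routine $BV$/$SBV$ compactness.
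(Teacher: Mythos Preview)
Your construction of $\Gamma$ is inconsistent, and this is a genuine gap. In the plan and in (i) you take $\Gamma$ to be the $\sigma^2$-limit of the cracks $\Gamma_n$ alone; in (ii) you realize this is inadequate and switch to working with $\Sigma_n':=\partial^*\Omega_n\cup\Gamma_n$ and its $\sigma^2$-limit. These produce different sets in general. Take $\Omega_n$ to be a ball with a thin slit of width $1/n$ removed and $\Gamma_n=\emptyset$: the $\sigma^2$-limit of $\Gamma_n$ is empty, yet the collapsed slit must be recorded as an inner crack of the limit, since a function jumping across it is a legitimate $L^2$-limit of functions in $\Theta(\Omega_n,\emptyset)$ and would otherwise violate $J_u\tsub\partial^*\Omega\cup\Gamma$ in (ii). The paper therefore defines $\Gamma:=K\cap\Omega^1$ where $K$ is the $\sigma^2$-limit of $\partial^*\Omega_n\cup\Gamma_n$ (and then $\Gamma\subset\Omega^1$ is automatic, so your argument for this inclusion becomes unnecessary).

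With the correct $\Gamma$, your proof of (i) fails outright: in the slit example $\hn(\Gamma)>0=\liminf_n\hn(\Gamma_n)$, so ``combining the two liminfs'' is impossible. What is needed is a device that weights $\partial^*\Omega$ by $1$ and $\Gamma$ by $2$ simultaneously. The paper achieves this by first building auxiliary functions $v_n\to v$ supported in $\Omega_n,\Omega$ with $J_v\teq(\partial^*\Omega\cup\Gamma)\cap D$, then applying SBV lower semicontinuity with the integrand $\gamma_l^2+\gamma_r^2$ to $\psi_{n,\varepsilon}:=1_{\Omega_n}+\varepsilon v_n\to\psi_\varepsilon:=1_\Omega+\varepsilon v$: as $\varepsilon\to 0$ one trace of $\psi_\varepsilon$ is $\approx 1$ and the other $\approx 0$ on $\partial^*\Omega$, while both are $\approx 1$ on $\Gamma$, giving exactly $Per(\Omega)+2\hn(\Gamma)$. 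The same $\varepsilon$-perturbation ($u_n+\varepsilon v_n$) is what makes \eqref{eq:superficie} work, forcing the jump set of the test functions to equal the full $\partial^*\Omega_n\cup\Gamma_n$ rather than merely $J_{u_n}$; your appeal to unspecified ``SBV lower-semicontinuity theorems'' does not supply this. Finally, the claim that all $\Omega_n^1$ lie in a fixed bounded $D$ is unjustified; the paper handles localization by a diagonal argument over expanding domains.
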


\begin{proof}
We will make use of the notion of $\sigma^2$-convergence of rectifiable sets introduced in \cite{DMFT} (see Section \ref{sec:sigma2}).
\par
From
$$
Per(\Om_n)+2\hn(\Gamma_n)=\Prob{\Om_n}{\Gamma_n}\le C,
$$
and Theorem \ref{thm:sigma2}, employing also a diagonal argument, we deduce that there exists a rectifiable set $K\subset \R^N$  such that, up to a subsequence (not relabelled), for every $D\subset \R^N$ open and bounded we have
$$
(\partial^*\Omega_n\cup\Gamma_n)\cap D \to K\cap D \qquad\text{in the sense of $\sigma^2$-convergence}.
$$
Since $J_{1_{\Omega_n \cap D}} \tsub (\partial^*\Omega_n\cup\Gamma_n)\cap D$ and $\nabla 1_{\Omega_n \cap D}=\nabla 1_{\Om\cap D}=0$, in view of \eqref{eq:s-omega_n} and of property (a) in Definition \ref{def:sigma2} of $\sigma^2$-convergence, we deduce that 
$$
\partial^*\Omega \cap D\tsub K\cap D.
$$
Since $D$ is arbitrary, we infer $\partial^*\Om\tsub K$. Let now decompose $K$ as 
$$
K=(K\cap\Omega^0)\cup\partial^*\Omega\cup(K\cap\Omega^1)
$$
and let us set
$$
\Gamma:=K\cap\Omega^1,
$$
so that $(\Om,\Gamma)\in \as(\R^N)$. 
\par
We divide the proof in several steps.

\vskip10pt\noindent{\bf Step 1.}
We claim that for every $D\subset \R^N$ open and bounded, there exist $v,v_n\in SBV(D)$ with $\|v\|_\infty,\|v_n\|_\infty\le C$, $v=v_n=0$ a.e. outside $\Om$ and $\Om_n$ respectively, $\nabla v,\nabla v_n\in L^2(D;\R^N)$, $J_v \teq (\partial^*\Omega\cup\Gamma)\cap D$, $J_{v_n}\tsub (\partial^*\Omega_n\cup\Gamma_n)\cap D$, such that
$$
v_n \to v\quad\text{strongly in $L^2(D)$},
$$
and
$$
\nabla v_n \weak \nabla v\quad\text{weakly in $L^2(D;\R^N)$}.
$$
By definition of $\sigma^2$-convergence there exist $w,w_n\in SBV(D)$ with $\|w\|_\infty,\|w_n\|_{\infty}\le C$, $J_w \teq K\cap D$, $J_{w_n}\tsub (\partial^*\Omega_n\cup\Gamma_n)\cap D$, such that
$$
w_n\to  w \qquad\text{strongly in }L^1(D)
$$
and
$$
\nabla w_n\weak  \nabla w \qquad\text{weakly in }L^2(D;\R^N).
$$
The first convergence is indeed also a convergence in $L^2(D)$ in view of the uniform bound on the $L^\infty$-norms. For $\varepsilon>0$ let us set
$$
v:=(w+\e)1_{\Om\cap D}\qquad\text{and}\qquad v_n:=(w_n+\e)1_{\Omega_n\cap D}.
$$
Clearly we have
$$
\Gamma \cap D\tsub J_{w1_{\Omega\cap D}} \tsub (\partial^*\Omega\cup\Gamma)\cap D
$$
and so, for a.e. $\varepsilon>0$ we deduce (see Remark \ref{rem:jumpsum})
$$
J_v \teq J_{w1_{\Omega\cap D}}\cup J_{1_{\Omega\cap D}} \teq (\partial^*\Omega\cup\Gamma)\cap D.
$$
Since
$$
J_{v_n} \tsub (\partial^*\Omega_n\cup\Gamma_n)\cap D,
$$
$v_n\to v$ strongly in $L^2(D)$ and $\nabla v_n\weak \nabla v$ weakly in $L^2(D;\R^N)$, we get that the claim follows by choosing $\e$ outside a negligible set.

\vskip10pt\noindent{\bf Step 2.}
Let us check item (i). Let $D\subset \R^N$ be open and bounded and let $\varepsilon>0$. Let us consider the functions on $D$
$$
\psi_\varepsilon:=1_{\Omega}+\varepsilon v \qquad\text{and}\qquad
\psi_{n,\varepsilon}:=1_{\Omega_n}+\varepsilon v_n,
$$
where $v,v_n$ are the functions given by Step 1. For a.e. $\e>0$ we have that 
$$
J_{\psi_\varepsilon}\teq J_{1_\Om}  \cup J_{v}  \teq(\partial^*\Omega\cup\Gamma)\cap D
\qquad\text{and}\qquad
J_{\psi_{n,\varepsilon}}\teq J_{1_{\Om_n}}  \cup J_{v_n} \tsub (\partial^*\Omega_n\cup\Gamma_n)\cap D.
$$
Since
$$
\psi_{n,\varepsilon}\to \psi_\varepsilon\qquad\text{strongly in }L^1(D)
$$
and
$$
\nabla \psi_{n,\varepsilon}\weak \nabla \psi_\varepsilon\qquad\text{weakly in }L^2(D;\R^N),
$$
lower semicontinuity in $SBV$ entails
$$
\int_{J_{\psi_\varepsilon}}[\gamma_l^2(\psi_\varepsilon)+\gamma_r^2(\psi_\varepsilon)]\:d\hn
\le\liminf_{n\to+\infty}\int_{J_{\psi_{n,\varepsilon}}}[\gamma_l^2(\psi_{n,\varepsilon})+\gamma_r^2(\psi_{n,\varepsilon})]\:d\hn.
$$
Taking into account bound \eqref{eq:boundProb}, for a.e. $\e>0$ small enough we deduce
\begin{align*}
\hn(\partial^*\Om \cap D)+2\hn(\Gamma\cap D)-C\varepsilon&\le\int_{J_{\psi_\varepsilon}}[\gamma_l^2(\psi_\varepsilon)+\gamma_r^2(\psi_\varepsilon)]\:d\hn\\
&\le\liminf_{n\to+\infty}\int_{J_{\psi_{n,\varepsilon}}}[\gamma_l^2(\psi_{n,\varepsilon})+\gamma_r^2(\psi_{n,\varepsilon})]\:d\hn\\
& \le \liminf_{n\to+\infty} \left[\hn(\partial^*\Om_n \cap D)+2\hn(\Gamma_n\cap D)\right]+C\e \\
&\le\liminf_{n\to+\infty}\Prob{\Omega_n}{\Gamma_n}+C\varepsilon,
\end{align*}
so item (i) follows by letting $\varepsilon\to0^+$ and $D$ invade the whole $\R^N$.

\vskip10pt\noindent{\bf Step 3.} Let us come to item (ii).  Since
$$
\hn(J_{u_n}) \le \hn(\partial^*\Om_n \cup \Gamma_n)\le \Prob{\Om_n}{\Gamma_n}\le C,
$$
the uniform bound on the Robin energy \eqref{eq:bound-rob} and the convergence  \eqref{eq:s-omega_n} yield  
\begin{align*}
|Du_n|(\R^N)&=\int_{\Omega_n}|\nabla u_n|\:dx+\int_{J_{u_n}}\left|\gamma_l(u_n)-\gamma_r(u_n)\right|\:d\hn\\
&\le\|\nabla u_n\|_{L^2(\R^N)}| \Om_n |^{1/2}+\int_{J_{u_n}}[\left|\gamma_l(u_n)\right|+\left|\gamma_r(u_n)\right|]\:d\hn\\
&\le\|\nabla u_n\|_{L^2(\R^N)}| \Om_n |^{1/2}+\hn(J_{u_n})+\frac{1}{2}\int_{J_{u_n}}(\gamma_l(u_n)^2+\gamma_r(u_n)^2)\:d\hn\\
&\le  C_1
\end{align*}
for some $C_1>0$ independent of $n$. Taking into account Remark \ref{rem:2dd-1}, we deduce that there exists $u\in BV(\R^N)$ with 
\begin{equation}
\label{eq:u=0}
u=0 \quad \text{a.e. in $\Omega^c$}.
\end{equation}
and
$$
u_n\to u\qquad\text{strongly in }L^2(\R^N).
$$

Let us fix $D\subset \R^N$ open and bounded. In view of Ambrosio's theorem we infer that $u\in SBV(D)$ with
$$
\nabla u_n\weak \nabla u\qquad\text{weakly in }L^2(D;\R^N).
$$
Let us consider for every $M>0$ the truncated functions
$$
u_{n,M}:=\max\{\min\{u_n,M\},-M\}\qquad\text{and}\qquad u_M:=\max\{\min\{u,M\},-M\}\
$$
Clearly 
$$
u_{n,M}\to u_M\qquad\text{strongly in }L^2(D),
$$ 
and
$$
\nabla u_{n,M}\weak \nabla u_M\qquad\text{weakly in }L^2(D;\R^N).
$$ 
Since $J_{u_{n,M}} \tsub K_n\cap D$, from the properties of $\sigma^2$-convergence and since $u_M=0$ a.e. on $\Om^c\cap D$ we infer $J_{u_M} \tsub (\partial^*\Om \cup \Gamma)\cap D$. 
\par
In view of the arbitrariness of $M$ and $D$ we conclude that 
\begin{equation}
\label{eq:uSBV}
u\in SBV(\R^N)
\end{equation}
with
\begin{equation}
\label{eq:nablaunwD}
\nabla u_n\weak \nabla u\qquad\text{weakly in }L^2(\R^N;\R^N).
\end{equation}
and
\begin{equation}
\label{eq:JuGamma}
J_{u}\tsub\partial^*\Omega\cup\Gamma.
\end{equation}
Let us check \eqref{eq:superficie}. For every $\varepsilon>0$ let us set
$$
w_n:=u_n+\varepsilon v_n\qquad\text{and}\qquad w:=u+\varepsilon v,
$$ 
where $v_n,v$ are given by Step 1.  For a.e. $\varepsilon>0$ we have
$$
J_{w}=J_{u+\varepsilon v}\teq J_u\cup J_v\teq (\partial^*\Omega\cup\Gamma)\cap D
\qquad\text{and}\qquad
J_{w_n}=J_{u_n+\varepsilon v_n}\tsub(\partial^*\Omega_n\cup\Gamma_n)\cap D.
$$
By lower semicontinuity in $SBV$ we may write
\begin{align*}
\int_{(\partial^*\Omega\cup\Gamma)\cap D}[\gamma_l^2(u+\varepsilon v)&+\gamma_r^2(u+\varepsilon v)]\:d\hn=\int_{J_{w}}[\gamma_l^2(w)+\gamma_r^2(w)]\:d\hn\\
&\le\liminf_{n\to+\infty}\int_{J_{w_n}}[\gamma_l^2(w_n)+\gamma_r^2(w_n)]\:d\hn\\
&\le\liminf_{n\to+\infty}\int_{(\partial^*\Omega_n\cup\Gamma_n)\cap D}[\gamma_l^2(u_n+\varepsilon v_n)+\gamma_r^2(u_n+\varepsilon v_n)]\:d\hn.
\end{align*}
Since the functions $v,v_n$ are uniformly bounded in $L^\infty(D)$,  and in view of the perimeter bound \eqref{eq:boundProb}, we get by letting $\varepsilon\to0^+$, and since $D$ is arbitrary
\begin{equation}
\label{eq:robn}
\int_{\partial^*\Omega\cup\Gamma}[\gamma_l^2(u)+\gamma_r^2(u)]\:d\hn\le\liminf_{n\to+\infty}\int_{\partial^*\Omega_n\cup\Gamma_n}[\gamma_l^2(u_n)+\gamma_r^2(u_n)]\:d\hn\le C.
\end{equation}
Thanks to \eqref{eq:uSBV}, \eqref{eq:u=0}, \eqref{eq:JuGamma}, \eqref{eq:nablaunwD}, and \eqref{eq:robn}, we conclude that $u\in \Theta(\Om,\Gamma)$, so that item (ii) is proved.
\par
\end{proof}

\begin{remark}
\label{rem:jumpsum}
{\rm
In the previous proof we used the following result: if $D\subseteq \R^N$ is open and $u,v\in SBV(D)$ with $\hn(J_u),\hn(J_v)<+\infty$, then for a.e. $\e>0$
$J_{u+\e v}\teq J_u\cup J_v$. For a proof of this fact we refer to \cite[Remark 3.13]{BucGiac-lambdak}.
}
\end{remark}

A key result for our analysis is the following lower semicontinuity result for $\lkb$ along converging configurations according to Theorem \ref{thm:lsc}.

\begin{theorem}[\bf Lower semicontinuity of $\lkb$]
\label{pro:lscrkb}
Let $(\Omega_n,\Gamma_n)\in \as(\R^N)$ converge to $(\Omega,\Gamma)\in \as(\R^N)$ in the sense of Theorem \ref{thm:lsc}. Then
$$\lkb(\Omega,\Gamma)\le\liminf_{n\to+\infty}\lkb(\Omega_n,\Gamma_n).$$
\end{theorem}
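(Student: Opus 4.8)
The plan is to prove lower semicontinuity of $\lkb$ by passing to the limit in the min-max characterization \eqref{eq:lambdak}, exploiting the compactness and lower semicontinuity machinery of Theorem \ref{thm:lsc}. The key point is that for each $n$ the minimum in \eqref{eq:lambdak} is attained on the span $V_n:=\mathrm{span}\{u_1^n,\dots,u_k^n\}$ of the first $k$ $L^2(\Om_n)$-normalized eigenfunctions of $(\Om_n,\Gamma_n)$, and $\lkb(\Om_n,\Gamma_n)=R_\beta(u_k^n)$ is an upper bound for $R_\beta$ on all of $V_n$. After passing to a subsequence realizing the liminf we may assume $\lkb(\Om_n,\Gamma_n)\to L<+\infty$ (if the liminf is $+\infty$ there is nothing to prove), so the Dirichlet-plus-boundary energy of each $u_j^n$ is uniformly bounded, i.e. \eqref{eq:bound-rob} holds for each $j=1,\dots,k$.

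First I would apply Theorem \ref{thm:lsc}(ii) simultaneously (via a diagonal argument over $j$) to extract a further subsequence along which, for every $j\in\{1,\dots,k\}$, $u_j^n\to u_j$ strongly in $L^2(\R^N)$, $\nabla u_j^n\weak\nabla u_j$ weakly in $L^2$, with $u_j\in\Theta(\Om,\Gamma)$ and the surface-energy lower bound \eqref{eq:superficie}. Strong $L^2$ convergence preserves $L^2$-orthonormality, so $\{u_1,\dots,u_k\}$ is orthonormal in $L^2(\Om)$, hence linearly independent, and $W:=\mathrm{span}\{u_1,\dots,u_k\}$ is a $k$-dimensional subspace of $\Theta(\Om,\Gamma)$. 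Thus $\lkb(\Om,\Gamma)\le\max_{v\in W\setminus\{0\}}R_\beta(v)$, and it suffices to bound this max by $L$.

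The heart of the argument is then: for an arbitrary $v=\sum_j c_j u_j\in W$, set $v_n:=\sum_j c_j u_j^n$. Then $v_n\to v$ strongly in $L^2$ so $\int_{\Om_n}v_n^2\,dx\to\int_\Om v^2\,dx=\sum_j c_j^2$, while $\nabla v_n\weak\nabla v$ weakly in $L^2$ gives $\int_\Om|\nabla v|^2\le\liminf\int_{\Om_n}|\nabla v_n|^2$; for the boundary term I would apply \eqref{eq:superficie} to the fixed finite linear combination $v_n$ (which also satisfies \eqref{eq:bound-rob}), obtaining the analogous liminf inequality, so $R_\beta(v)\le\liminf R_\beta(v_n)$. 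Finally, since each $u_j^n$ is an eigenfunction for $\tilde\lambda_{j,\beta}(\Om_n,\Gamma_n)\le\lkb(\Om_n,\Gamma_n)$ and the eigenfunctions are $a_\beta$-orthogonal, one computes $R_\beta(v_n)=\frac{\sum_j c_j^2\,\tilde\lambda_{j,\beta}(\Om_n,\Gamma_n)}{\sum_j c_j^2}\le\lkb(\Om_n,\Gamma_n)\to L$. Combining, $R_\beta(v)\le L$ for every $v\in W$, hence $\lkb(\Om,\Gamma)\le\max_{v\in W}R_\beta(v)\le L=\liminf_n\lkb(\Om_n,\Gamma_n)$.

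The main obstacle I anticipate is purely technical bookkeeping: ensuring that Theorem \ref{thm:lsc}(ii) can be invoked not just for the individual $u_j^n$ but for the linear combinations $v_n$ with coefficients independent of $n$ — this is fine since $v_n$ still obeys the energy bound \eqref{eq:bound-rob} (with constant depending on $|c|$) and $v_n\to v$, $\nabla v_n\weak\nabla v$ follow from linearity — together with the diagonal extraction of a single subsequence working for all $j$ simultaneously and the verification that the limit objects $u_j$ are genuinely $L^2$-orthonormal (hence $\dim W=k$). One should also note at the outset that $\Om$ is automatically of finite volume and $(\Om,\Gamma)\in\as(\R^N)$, which is supplied by Theorem \ref{thm:lsc} itself, so no extra hypothesis is needed.
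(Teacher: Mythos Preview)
Your proposal is correct and follows essentially the same route as the paper: pick the first $k$ orthonormal eigenfunctions on $(\Om_n,\Gamma_n)$, use Theorem \ref{thm:lsc}(ii) to pass them (and their fixed linear combinations) to limits in $\Theta(\Om,\Gamma)$, observe the limit family stays $L^2$-orthonormal so it spans a $k$-dimensional test space, and conclude via lower semicontinuity of $R_\beta$ on each linear combination. The only cosmetic difference is that you compute $R_\beta(v_n)$ explicitly through the $a_\beta$-orthogonality of eigenfunctions, whereas the paper simply bounds it by $\max_{V_n}R_\beta=\lkb(\Om_n,\Gamma_n)$; both are equivalent here.
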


\begin{proof}
Let us assume $\lkb(\Omega_n,\Gamma_n)\le C$ and let us consider $$V_n:=\text{span}\{u_{n,1},\ldots,u_{n,k}\}\subset\Theta(\Omega_n,\Gamma_n)$$ 
such that
$$\lkb(\Omega_n,\Gamma_n)=\max_{V_n}R_\beta.$$
Without loss of generality we can consider the $u_{n,j}$ to form a $L^2$-orthonormal family. Then we get for $j=1,\dots,k$
$$
\int_{\Om_n}|\nabla u_{n,j}|^2\,dx+\int_{\partial^*\Om_n\cup \Gamma_n}[\gamma_l^2(u_{n,j})+\gamma_r^2(u_{n,j})]\:d\hn\le \tilde C.
$$
By Theorem \ref{thm:lsc} we deduce that there exists $u_j\in \Theta(\Om,\Gamma)$ such that
$$
u_{n,j}\to u_j\quad\text{strongly in $L^2(\R^N)$}
$$
$$
\nabla u_{n,j}\weak\nabla  u_j\quad\text{weakly in $L^2(\R^N;\R^N)$}
$$
with
$$
\int_{\partial^*\Omega\cup\Gamma}\left[\gamma_l^2(u_j)+\gamma_r^2(u_j)\right]\:d\hn\\
\le\liminf_{n\to+\infty}\int_{\partial^*{\Omega_n}\cup{\Gamma_n}}\left[\gamma_l^2(u_{n,j})+\gamma_r^2(u_{n,j})\right]\:d\hn.
$$
Clearly $\{u_j\,:\, j=1,\dots,k\}$ forms an orthonormal family in $L^2$, so that $V:=\text{span}\{u_1,\ldots,u_k\}$ is a $k$-dimensional subspace of $\Theta(\Omega,\Gamma)$. Using again Theorem \ref{thm:lsc}, for every $a_j\in \R$ we have
$$
R_\beta\left(\sum_j a_j u_j\right) \le \liminf_n R_\beta\left(\sum_j a_j u_{n,j}\right) \le \liminf_n \max_{V_n} R_\beta=\liminf_n \lkb(\Om_n,\Gamma_n)
$$
which entails
$$
\lkb(\Omega,\Gamma)\le\max_{V} R_\beta\le \liminf_n \lkb(\Om_n,\Gamma_n),
$$
and the conclusion follows.
 
\end{proof}

\subsection{Approximation through regular domains}
The following result concerns the approximation of {\it bounded} configurations in $\as(\R^N)$ through regular domains.

\begin{theorem}
\label{thm:den}
Let $(\Omega,\Gamma)\in\mathcal{A}(\R^N)$ such that $\Om$ is bounded. Then, there exists a sequence of bounded regular open sets $(\Omega_n)_{n\in\N}$ such that
$$
\limsup_{n\to+\infty}\lambda_{k,\beta}(\Omega_n) \le \lkb(\Omega,\Gamma)
$$
and
$$
\limsup_{n\to+\infty}\hn(\partial\Omega_n)\le 
\Prob{\Omega}{\Gamma}.
$$
\end{theorem}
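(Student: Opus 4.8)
The plan is to use the direct approximation machinery of Theorem~\ref{teo:cor_toa}: since only upper bounds are asked for, it suffices to exhibit, for large $n$, a bounded regular open set $\Omega_n$ together with $k$ linearly independent $u_{1,n},\dots,u_{k,n}\in W^{1,2}(\Omega_n)$ such that $\limsup_n\hn(\partial\Omega_n)\le\Prob{\Omega}{\Gamma}$ and $\limsup_n\max_{c\in S^{k-1}}R_\beta\bigl(\sum_{j}c_ju_{j,n}\bigr)\le\lkb(\Omega,\Gamma)$; the min--max formula \eqref{eq:minmaxintr} then gives $\lambda_{k,\beta}(\Omega_n)\le\max_{\text{span}\{u_{j,n}\}}R_\beta\le\lkb(\Omega,\Gamma)+o(1)$, which is exactly the two assertions. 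First I would fix an $L^2$-orthonormal family $u_1,\dots,u_k$ of eigenfunctions of $(\Omega,\Gamma)$ whose span realizes the minimum in \eqref{eq:lambdak}, so that $\max_{c\in S^{k-1}}R_\beta\bigl(\sum_{j}c_ju_j\bigr)=\lkb(\Omega,\Gamma)$; by Theorem~\ref{th:boundeigen} they lie in $L^\infty$, they vanish off $\Omega$, and $J_{u_j}\tsub\partial^*\Omega\cup\Gamma$. Using crucially that $\Omega$ is bounded, I would fix a ball $B\supset\overline{\Omega}$ and regard $1_\Omega,u_1,\dots,u_k$ as bounded $SBV(B)$ functions with $L^2$ gradient and finite jump measure.

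Next I would apply Theorem~\ref{teo:cor_toa} in its vector-valued form to $U:=(1_\Omega,u_1,\dots,u_k)$. This produces $U_n=(\zeta_n,u_{1,n},\dots,u_{k,n})$ with a common polyhedral jump set $J_n$, each component in $W^{1,\infty}$ away from $\overline{J_n}$, uniformly bounded in $L^\infty$, with $\zeta_n\to 1_\Omega$ and $u_{j,n}\to u_j$ in $L^1(B)$, $\nabla\zeta_n\to 0$ and $\nabla u_{j,n}\to\nabla u_j$ in $L^2(B)$, and the jump inequality $\limsup_n\int_{J_n}\varphi(x,U_n^+,U_n^-,\nu)\,d\hn\le\int_{\partial^*\Omega\cup\Gamma}\varphi(x,U^+,U^-,\nu)\,d\hn$ for every non-negative upper semicontinuous $\varphi$ with $\varphi(x,a,b,\nu)=\varphi(x,b,a,-\nu)$ (the uniform $L^\infty$-bounds allow the full domain $B$). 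Since $\zeta_n\to 1_\Omega$, a single generic level $t^*\in(0,1)$ — fixed by a diagonal argument over $n$ using Sard's theorem and the coarea formula — makes $\{\zeta_n>t^*\}$ converge to $\Omega$ in $L^1(B)$ with smooth regular boundary, and, applying the jump inequality with $\varphi\equiv 1$ and with $\varphi(x,a,b,\nu)=|a_0-b_0|$ ($a_0$ the $\zeta$-slot) together with coarea, one gets $\hn(\partial\{\zeta_n>t^*\})\to Per(\Omega)$ along a subsequence, the $\zeta_n$-jumps concentrating near $\partial^*\Omega$. Let $\Sigma_n\subset J_n$ be the part where both traces of $\zeta_n$ exceed $t^*$ (the polyhedral ``shadow'' of the crack, which sits in the interior of $\{\zeta_n>t^*\}$; where all $u_j$ are continuous it is simply absent), and set $\Omega_n:=\{\zeta_n>t^*\}\setminus\overline{N_{\delta_n}(\Sigma_n)}$ for an open $\delta_n$-neighbourhood with $\delta_n\downarrow 0$ chosen (again diagonally) so small that $\Omega_n$ is a bounded Lipschitz open set and the lateral area $\lesssim\delta_n\,\mathcal H^{N-2}(\partial\Sigma_n)$ is negligible. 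Opening the crack into a two-sided gap, $\partial\Omega_n$ splits into the (essentially jump) boundary of $\{\zeta_n>t^*\}$, approximating $\partial^*\Omega$, plus the \emph{two} faces of the tube around $\Sigma_n$, whence $\limsup_n\hn(\partial\Omega_n)\le Per(\Omega)+2\hn(\Gamma)=\Prob{\Omega}{\Gamma}$ — the factor $2$ being precisely the definition of the generalized perimeter.

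It remains to estimate $R_\beta$ of $u_{j,n}|_{\Omega_n}$ (smooth there, hence in $W^{1,2}(\Omega_n)$, and linearly independent for $n$ large since $u_{j,n}\to u_j$ in $L^2$). For fixed $c\in S^{k-1}$: the Dirichlet term obeys $\int_{\Omega_n}|\nabla\sum_{j}c_ju_{j,n}|^2\le\int_B|\sum_{j}c_j\nabla u_{j,n}|^2\to\int_\Omega|\sum_{j}c_j\nabla u_j|^2$; the denominator $\int_{\Omega_n}(\sum_{j}c_ju_{j,n})^2\to\int_\Omega(\sum_{j}c_ju_j)^2=|c|^2=1$ by the $L^1$-convergence, the $L^\infty$-bound and orthonormality; and, since $\partial\Omega_n$ is the one-sided (jump) boundary of $\{\zeta_n>t^*\}$ together with the two faces of the tube around $\Sigma_n$ (two disjoint subsets of $J_n$), one checks $\int_{\partial\Omega_n}(\sum_{j}c_ju_{j,n})^2\le\int_{J_n}\bigl[(\sum_{j}c_ju_{j,n}^+)^2+(\sum_{j}c_ju_{j,n}^-)^2\bigr]\,d\hn+o(1)$. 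Inserting the admissible integrand $\varphi_c(x,a,b,\nu):=(\sum_{j}c_ja_j)^2+(\sum_{j}c_jb_j)^2$ (on the $u$-slots) into the jump inequality, and using that $u_j=0$ off $\Omega$ so that one trace vanishes on $\partial^*\Omega$, yields $\limsup_n\int_{\partial\Omega_n}(\sum_{j}c_ju_{j,n})^2\le\int_{\partial^*\Omega\cup\Gamma}\bigl[\gamma_l^2+\gamma_r^2\bigr]\bigl(\sum_{j}c_ju_j\bigr)\,d\hn$ (the contribution of the part of $\Gamma$ invisible to all $u_j$ is simply discarded, which only helps). Hence $\limsup_nR_\beta(\sum_{j}c_ju_{j,n})\le R_\beta(\sum_{j}c_ju_j)\le\lkb(\Omega,\Gamma)$ for each $c$; upgrading to uniformity in $c$ by writing numerator and denominator as quadratic forms $c^\top N_nc$, $c^\top G_nc$ with $N_n\ge0$ and $G_n\to I$, and passing to a subsequence along which $N_n$ converges to a form dominated by the one associated with $R_\beta(\sum_{j}c_ju_j)$, one concludes $\limsup_n\max_{|c|=1}R_\beta(\sum_{j}c_ju_{j,n})\le\lkb(\Omega,\Gamma)$. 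After relabelling the chosen subsequence, the min--max formula gives $\limsup_n\lambda_{k,\beta}(\Omega_n)\le\lkb(\Omega,\Gamma)$, which together with the perimeter bound finishes the proof.

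The hard part will be the geometric surgery of the second step: turning the $SBV$/polyhedral output of Theorem~\ref{teo:cor_toa} into honest bounded Lipschitz domains and, above all, manufacturing the factor $2\hn(\Gamma)$ by ``opening'' the interior crack $\Sigma_n$ into a two-sided gap while keeping negligible both the lateral perimeter of the tube and the perturbation this induces on the boundary energies of the test functions; this is what forces the delicate triple diagonal extraction over the approximation index $n$, the slicing level $t^*$ and the tube radius $\delta_n$, and requires the bookkeeping lemma showing that $J_n$ splits cleanly into a part converging to $\partial^*\Omega$ and a part converging to the crack. A subordinate but non-trivial point is that Theorem~\ref{teo:cor_toa} must be applied to the \emph{vector} $(1_\Omega,u_1,\dots,u_k)$, so that the traces of $1_\Omega$ and of the eigenfunctions are coupled along the same polyhedral jump set, ensuring that $\partial^*\Omega$ and $\Gamma$ are recovered together with the correct boundary data.
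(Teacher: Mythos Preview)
Your overall plan---fix the first $k$ eigenfunctions, pass to an $SBV$ approximation with polyhedral jump set via Theorem~\ref{teo:cor_toa}, then open the crack into a thin tube---matches the second half of the paper's argument. Where you diverge is in how $\Omega$ itself is regularized. You couple $1_\Omega$ to the eigenfunctions and feed the whole vector $(1_\Omega,u_1,\dots,u_k)$ to Cortesani--Toader on a large ball, recovering the domain as a superlevel set $\{\zeta_n>t^*\}$ of the approximant $\zeta_n$ of $1_\Omega$. The paper instead decouples the two tasks: in Step~1 it uses the \emph{interior} approximation of Proposition~\ref{pro:comitorres} (Comi--Torres) to replace $\Omega$ by a genuinely smooth open set while pushing the residual boundary into the crack, proving the upper bound for $\lkb$ at this stage via the $\sigma^2$-convergence machinery of Theorem~\ref{thm:lsc} and a complementary test function $w=\sqrt{(\|u\|_\infty+1)^2-u^2}\,1_\Omega$; only in Step~2, with $\Omega$ already smooth, does it invoke Theorem~\ref{teo:cor_toa} on the eigenfunctions alone.

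This separation buys the paper a clean perimeter accounting and automatic regularity for $\Omega_n$: once $\Omega$ is smooth, the polyhedral jump set sits compactly inside, one excises a shrinking smooth neighborhood $A^{k_n}_n$ of it, and $\Omega_n=\Omega\setminus A^{k_n}_n$ is smooth with $\hn(\partial\Omega_n)=\hn(\partial\Omega)+\hn(\partial A^{k_n}_n)\to\hn(\partial\Omega)+2\hn(\Gamma)$. Your single-step route instead forces you to split $J_n$ according to the traces of $\zeta_n$ into a ``boundary'' part $T_n$ and a ``crack'' part $\Sigma_n$. Getting $\hn(T_n)\to Per(\Omega)$ and $\hn(\Sigma_n)\le\hn(\Gamma)+o(1)$ does follow from the Fatou/coarea subsequence argument you sketch, but the set $\Sigma_n$ is \emph{not} itself polyhedral (its relative boundary in $J_n$ is a level set of the smooth traces $\zeta_n^\pm$), so the tube $N_{\delta_n}(\Sigma_n)$ and its junction with $\partial\{\zeta_n>t^*\}$ need more care than the estimate ``lateral area $\lesssim\delta_n\,\hs^{N-2}(\partial\Sigma_n)$'' and the Lipschitz claim for $\Omega_n$ suggest. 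None of this looks fatal, but the paper's two-step approach sidesteps precisely the bookkeeping and surgery you correctly flag as the hard part.
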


\begin{proof}
Let $u_1,\dots,u_k$ be the first $k$ eigenfunctions of $(\Om,\Gamma)$. It is not restrictive to assume $\Gamma=J_{(u_1,\dots,u_k)}$: indeed, by employing the min-max characterization of $\lkb$ we have
$$
\lkb(\Om, J_{(u_1,\dots,u_k)})\le \max_{\text{span}\{u_1,\dots,u_k\}} R_\beta=\lkb(\Om,\Gamma).
$$

We divide the proof in several steps.

\vskip10pt\noindent{\bf Step 1: Approximation from inside.} 
Let us approximate $\Omega$ with smooth sets ``from inside'' using Proposition \ref{pro:comitorres} with the choice $\mu:=\hn\lfloor (\partial^*\Om \cup \Gamma)$. We find $\Om_n\subset\R^N$ smooth open set such that
$$
|\Om_n \Delta\Omega|\to 0,\quad Per(\Om_n)\to Per(\Omega),
$$
with
$$
\hn(\partial^*\Om \cap \Om_n)\to 0, \qquad \hn(\Gamma\setminus \Om_n)\to 0, \qquad \hn(\partial \Om_n\setminus \Omega^1)\to 0.
$$
If $D$ is open and bounded such that $\Om\subset\subset D$, we can assume that also $\Om_n\subset\subset D$.
\par
If we set 
$$
\Gamma_n:= (\Gamma\cap \Om_n) \cup (\partial^*\Om \cap \Om_n),
$$
then clearly $(\Om_n,\Gamma_n)\in \as(\R^N)$ with
\begin{equation}
\label{eq:convprob1}
\lim_{n\to+\infty}\Prob{\Om_n}{\Gamma_n}=\Prob{\Om}{\Gamma}.
\end{equation}

We claim that
\begin{equation}
\label{eq:lambdaksup}
\limsup_{n\to+\infty} \lkb(\Om_n,\Gamma_n)\le \lambda_k(\Om,\Gamma).
\end{equation}
In order to prove this, we first show that $(\Om,\Gamma)$ is the limit configuration of the sequence $(\Om_n,\Gamma_n)$ according to Theorem \ref{thm:lsc}  and that, in addition, 
if $u\in \Theta(\Om,\Gamma)\cap L^\infty(\R^N)$,
then 
$$
u_n:=u 1_{\Om_n}\in \Theta(\Om_n,\Gamma_n)
$$ 
with
\begin{equation}
\label{eq:1sup}
\lim_{n\to +\infty}\int_{\partial^*\Omega_n\cup\Gamma_n}[\gamma^2_l(u_n)+\gamma^2_r(u_n)]\:d\hn=
\int_{\partial^*\Omega\cup\Gamma}[\gamma^2_l(u)+\gamma^2_r(u)]\:d\hn.
\end{equation}
Indeed, let
$(\Om,K)$ be a limit configuration of $(\Om_n,\Gamma_n)$ up to a subsequence. Since $u_n\to u$ strongly in $L^2(\R^N)$ and satisfies the bound \eqref{eq:bound-rob}, we infer that $J_u\tsub K$. By choosing $u$ equal to the eigenfunctions $u_1,\dots,u_k$ of $(\Om,\Gamma)$, and since we are assuming that $\Gamma$ is the union of the their jump sets, we deduce $\Gamma\tsub K$. From
$$
\Prob{\Om}{K}\le \liminf_{n\to +\infty}\Prob{\Om_n}{\Gamma_n}=\Prob{\Om}{\Gamma},
$$
we get $K\teq\Gamma$. The lower semicontinuity \eqref{eq:superficie} entails
\begin{equation}
\label{eq:liminfT}
\int_{\partial^*\Omega\cup\Gamma}[\gamma_l(u)^2+\gamma_r(u)^2]\:d\hn\le\liminf_{n\to+\infty}\int_{\partial^*\Omega_n\cup\Gamma_n}[\gamma_l^2(u_n)+\gamma_r^2(u_n)]\:d\hn,
\end{equation}
while the same argument applied to the function
$$
w:=\sqrt{(\|u\|_\infty+1)^2-u^2}1_\Om
$$
yields
\begin{multline*}
(\|u\|_\infty+1)^2 \Prob{\Om}{\Gamma}-\int_{\partial^*\Om \cup \Gamma}[\gamma^2_l(u)+\gamma^2_r(u)]\,d\hn\\
\le 
\liminf_{n\to+\infty}\left[(\|u\|_\infty+1)^2  \Prob{\Om_n}{\Gamma_n}-\int_{\partial^*\Om_n\cup \Gamma_n}[\gamma^2_l(u_n)+\gamma_r^2(u_n)]\,d\hn\ \right]
\end{multline*}
so that the convergence of the perimeter \eqref{eq:convprob1} entails
$$
\limsup_{n\to +\infty}\int_{\partial^*\Omega_n\cup\Gamma_n}[\gamma^2_l(u_n)+\gamma^2_r(u_n)]\:d\hn\le
\int_{\partial^*\Omega\cup\Gamma}[\gamma^2_l(u)+\gamma^2_r(u)]\:d\hn.
$$
which together with \eqref{eq:liminfT} yields \eqref{eq:1sup}.
\par
Let us come to claim \eqref{eq:lambdaksup} concerning the eigenvalues. Recall that
$$
\lkb(\Om,\Gamma)=\max_{V_k}R_\beta=R_{\beta}(u_k),
$$
where $R_k$ is the Rayleigh quotient \eqref{eq:Rbeta} and $V_k$ is the space generated by the first  $k$ eigenfunctions $u_1,\dots,u_k$. We deduce that $u_{i,n}:=u_i1_{\Om_n}\in \Theta(\Om_n,\Gamma_n)$ for $i=1,\dots,k$, with
\begin{equation}
\label{eq:vin}
u_{i,n}\to u_i\qquad\text{strongly in }L^2(\R^N),
\end{equation}
\begin{equation}
\label{eq:nablavin}
\nabla v_{i,n}\to \nabla u_i\qquad\text{strongly in }L^2(\R^N;\R^N),
\end{equation}
and for every $a_1,\dots,a_k\in\R$
\begin{multline}
\label{eq:convsupn}
\lim_{n\to +\infty} \int_{\partial \Om_n\cup \Gamma_n}\left[\gamma_l^2\left(\sum_i a_i u_{i,n}\right)+\gamma_r^2\left(\sum_i a_i u_{i,n}\right)\right]\,d\hn\\
=\int_{\partial \Om\cup \Gamma}\left[\gamma_l^2\left(\sum_i a_i u_i\right)+\gamma_r^2\left(\sum_i a_i u_i\right)\right]\,d\hn.
\end{multline}
The convergence of the surface energies follows from \eqref{eq:1sup} since the eigenfunctions are in $L^\infty$. If we set
$$
V_k^n:=\text{span}\{u_{1,n},\dots,u_{k,n}\},
$$
we get that $V_k^n\subset \Theta(\Om_n,\Gamma_n)$ is $k$-dimensional for $n$ large and
$$
\limsup_{n\to+\infty}\max_{V_k^n} R_\beta\le \max_{V_k}R_\beta.
$$
Indeed we can assume
$$
\max_{V_k^n} R_\beta=R_\beta\left(\sum_i a_{i,n}u_{i,n}\right)
$$
for some $a_{i,n}\to a_i$ with $|(a_1,\dots,a_k)|=1$. Then \eqref{eq:vin}, \eqref{eq:nablavin} and \eqref{eq:convsupn} together with the uniform bound in $L^\infty$ and the fact $\hn(\partial \Om_n\cup\Gamma_n)\le C$ imply that
$$
R_\beta\left(\sum_i a_{i,n}u_{i,n}\right)\to R_\beta\left(\sum_i a_iu_i\right)\le \max_{V_k}R_\beta.
$$
We can thus write
$$
\limsup_{n\to+\infty}\lambda_k(\Om_n,\Gamma_n)\le \limsup_{n\to+\infty}\max_{V_k^n} R_\beta\le \max_{V_k}R_\beta=\lambda_k(\Om,\Gamma),
$$
and claim \eqref{eq:lambdaksup} follows.

\vskip10pt\noindent{\bf Step 2.} In view of Step 1, it is not restrictive to assume $\Om\subset\R^N$ open with smooth boundary. 
We apply Theorem \ref{teo:cor_toa} to approximate the vector valued function on $\Om$
$$
u:=(u_1,\dots,u_k)\in SBV(\Om;\R^k)\cap L^\infty(\Om;\R^N),
$$
where $u_1,\dots,u_k$ are the first $k$ eigenfunctions of $(\Om,\Gamma)$.
There exists $v^n=(v^n_1,\dots, v^n_k)\subset L^2(\Om;\R^N)$ with $\|v^n\|_\infty \le \|u\|_\infty$ such that
$$
v^n\to u\quad\text{strongly in $L^2(\Om;\R^k)$},
$$
\begin{equation}
\label{eq:CTnabla}
\nabla v^n\to \nabla u\quad\text{strongly in $L^2(\Om;\R^{kN})$},
\end{equation}
$$
J_{v^n}\ \text{polyhedral in $\Om$}
$$
$$
v^n\in W^{m,\infty}(\Om\setminus\overline{J_{v^n}};\R^N)\qquad\text{for every $m\ge 1$},
$$
and, for any upper semicontinuous function $\varphi:\Omb\times \R^N\times \R^N\times S^{N-1}\to [0,+\infty[$ locally bounded near the boundary and any open set $A\subseteq \Om$
\begin{align}
\label{eq:CTvarphi}
\limsup_{n\to+\infty}\int_{J_{v^n}\cap A}\varphi(x,\gamma_l(v^n),\gamma_r(v^n),\nu_{J_{v^n}})\:d\hn
\le\int_{J_{v}\cap A}\varphi(x,\gamma_l(u),\gamma_r(u),\nu_{J_{u}})\:d\hn.
\end{align}
Since the edges of $(N-1)$-dimensional simplexes have two capacity zero, we can assume that $J_{v^n}$ is composed of a finite family of disjoint simplexes compactly contained in $\Om$. 
\par
Notice that (choose $\varphi=1$)
\begin{equation}
\label{eq:CTj}
\limsup_{n\to+\infty}\hn(\overline{J_{v^n}})\le\hn(J_{u})=\hn(\Gamma).
\end{equation}
Moreover
$$
|Dv^n|(\Om)\to |Du|(\Om) 
$$ 
This follows from \eqref{eq:CTnabla} and \eqref{eq:CTvarphi} with the choice $\varphi(x,a,b,\nu)=|a-b|$ and $A=\Om$. As a consequence we obtain the convergence of the associated traces on $\partial\Om$, since the trace operator is continuous under strong $L^1$ convergence together with the convergence of the total variation (it is the so called strict convergence in $BV$): the convergence of the traces holds also in $L^2$ in view of the uniform bound on $\|v^n\|_\infty$.
\par
By choosing $\varphi(x,a,b,\nu)=|\alpha\cdot a|^2+|\alpha\cdot b|^2$ where $\alpha\in \R^k$, and using the convergence of the traces on $\partial\Om$ we may write
\begin{multline*}
\limsup_n \int_{\partial\Om \cup J_{v^n}} [\gamma_l^2(\alpha_1 v^n_1+\dots+\alpha_k v^n_k)+\gamma_r^2(\alpha_1 v^n_1+\dots+\alpha_k v^n_k)]\,d\hn\\
\le \int_{\partial\Om \cup \Gamma} [\gamma_l^2(\alpha_1 u_1+\dots+\alpha_k u_k)+\gamma_r^2(\alpha_1u_1+\dots+\alpha_k u_k)]\,d\hn,
\end{multline*}
the inequality being uniform in $\alpha$ for $|\alpha|=1$: this is due to the uniform bound in $L^\infty$ for $v^n$ together with \eqref{eq:CTj}.
\par
In view of the geometric structure of $\overline{J_{v^n}}$ we can write
$$
\overline{J_{v^n}}=\bigcap_k A^k_n
$$
where $(A_n^k)_{k\in\N}$ is a decreasing sequence of smooth open set compactly contained in $\Om$ with 
$$
\lim_{k\to +\infty}\hn(\partial A^k_n)=2\hn(\overline{J_{v^n}}).
$$
In view of the regularity of $v^n$ outside $\overline{J_{v^n}}$, by using a diagonal argument we find $k=k_n$ such that
\begin{multline}
\label{eq:CTAn}
\limsup_n \int_{\partial\Om \cup \partial A_n^{k_n}} \gamma_l^2(\alpha_1 v^n_1+\dots+\alpha_k v^n_k)\,d\hn\\
\le \int_{\partial\Om \cup \Gamma} [\gamma_l^2(\alpha_1 u_1+\dots+\alpha_k u_k)+\gamma_r^2(\alpha_1u_1+\dots+\alpha_k u_k)]\,d\hn,
\end{multline}
the inequality being uniform in $\alpha\in \R^N$ with $|\alpha|=1$, and
$$
\limsup_{n\to +\infty} \hn(\partial A_n^{k_n})\le 2\limsup_{n\to+\infty}\hn(\overline{J_{v^n}})=2\hs^{d.1}(\Gamma).
$$
\par
By considering the smooth set $\Om_n:=\Om\setminus A_n^{k_n}$ we have
$$
\limsup_{n\to +\infty}\hn(\partial \Om_n)=\hn(\partial\Om)+\limsup_n \hn(\partial A_n^{k_n})\le \hn(\partial\Om)+2\hs^{d.1}(\Gamma)=\Prob{\Om}{\Gamma}.
$$
Then in view of \eqref{eq:CTAn}, by using the same arguments of Step 1 we infer that
$$
\limsup_{n\to+\infty}\lambda_{k,\beta}(\Om_n)\le \lkb(\Om,\Gamma)
$$
and the proof is concluded.
\end{proof}

\begin{remark}
\label{rem:approx-lek}
{\rm
Notice that the previous construction shows that indeed
$$
\limsup_{n\to+\infty}\lambda_{h,\beta}(\Om_n)\le \tilde \lambda_{h,\beta}(\Om,\Gamma)
$$
for every $1\le h\le k$, as the function $u$ involved in Step 2 is such that its components  $(u_1,\dots,u_k)$ are the first $k$ eigenfunctions of $(\Om,\Gamma)$.
}
\end{remark}

\section{Proof of the main results}
\label{sec:proofs}
In this section we provide the proof of the main results of the paper.

\subsection{Proof of Theorem \ref{th:main1}}
\label{sec:main1}
Let us start with the following general inequality, which is based on a cutting argument used in \cite[Theorem 4.6]{BucGiac-lambdak}, 

\begin{lemma}
\label{lem:cut}
Let $(\Omega,\Gamma) \in \as(\R^N)$, and for $t\in \R$
$$
\Omega_t:=\Omega\cap\left\{x_1<t\right\},\qquad\text{and}\qquad \Gamma_t:=\Omega\cap\left\{x_1<t\right\}.
$$ 
Then for a.e. $t$ large enough we have
\begin{equation}\label{eq:cut}
\lkb(\Omega_t,\Gamma_t)\le \lkb(\Omega,\Gamma)+C\hn(\Omega\cap\{x_1=t\}),
\end{equation}
where $C>0$ is independent of $t$.
\end{lemma}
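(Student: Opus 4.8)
The plan is to use the eigenfunctions $u_1,\dots,u_k\in\Theta(\Om,\Gamma)$ of $(\Om,\Gamma)$, suitably cut off, as trial functions for $(\Om_t,\Gamma_t)$ and estimate the resulting Rayleigh quotients via the min-max formula \eqref{eq:lambdak}. For a fixed large $t$, set $u_j^t:=u_j\,1_{\{x_1<t\}}$. First I would check that $u_j^t\in\Theta(\Om_t,\Gamma_t)$: since multiplying an $SBV$ function supported in $\Om$ by the characteristic function of a halfspace keeps us in $SBV$, and the new jump set is contained in $(\partial^*\Om\cup\Gamma)\cap\{x_1<t\}$ together with (a subset of) $\Om\cap\{x_1=t\}$, which for a.e. $t$ has finite $\hn$-measure and the good trace properties. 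The new slice $\Om\cap\{x_1=t\}$ gets absorbed into $\partial^*\Om_t$ (it is the ``freshly exposed'' boundary), so indeed $(\Om_t,\Gamma_t)\in\as(\R^N)$ and $u_j^t\in\Theta(\Om_t,\Gamma_t)$ for a.e. $t$.

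Next I would take $V_k:=\mathrm{span}\{u_1,\dots,u_k\}$, the optimal subspace in the min-max for $\lkb(\Om,\Gamma)$, and $V_k^t:=\mathrm{span}\{u_1^t,\dots,u_k^t\}$. For $t$ large enough the restrictions $u_1^t,\dots,u_k^t$ remain linearly independent (since $u_j\to u_j^t$ in $L^2$ as $t\to+\infty$, the Gram matrix converges to the identity), so $\dim V_k^t=k$. For any $v=\sum_j a_j u_j$ with $\|v\|_{L^2(\Om)}=1$, write $v^t=\sum_j a_j u_j^t$. Then
\begin{align*}
R_\beta(v^t)&=\frac{\int_{\Om_t}|\nabla v|^2\,dx+\beta\int_{(\partial^*\Om_t\cup\Gamma_t)}[\gamma_l^2(v^t)+\gamma_r^2(v^t)]\,d\hn}{\int_{\Om_t}v^2\,dx}.
\end{align*}
The Dirichlet numerator over $\Om_t$ is at most that over $\Om$; the surface term splits into the part carried by $(\partial^*\Om\cup\Gamma)\cap\{x_1<t\}$, bounded by the corresponding integral over $\partial^*\Om\cup\Gamma$, plus a contribution from the slice $\Om\cap\{x_1=t\}$ of the form $\beta\int_{\Om\cap\{x_1=t\}}(\text{trace of }v)^2\,d\hn^{N-1}$.

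The two things that need control are: (a) this slice surface term, and (b) the denominator $\int_{\Om_t}v^2$, which could shrink. For (a), by the coarea formula $\int_{\R}\big(\int_{\Om\cap\{x_1=t\}}(\text{one-sided trace of }v)^2\,d\hn\big)\,dt\le \int_\Om v^2 + \text{(gradient terms)}<+\infty$, more precisely one uses that for $w\in BV$ the function $t\mapsto\int_{\{x_1=t\}}|w|\,d\hn$ is integrable, applied to $w=v^2$; uniformly in $\|(a_j)\|=1$ since the $u_j$ are fixed. Hence for a.e.\ large $t$ the slice term is $\le C\,\hn(\Om\cap\{x_1=t\})$ with $C$ depending only on $\beta,k$ and the fixed eigenfunctions — this is where $C$ independent of $t$ comes from, though one must be slightly careful: the cleaner route, following \cite[Theorem 4.6]{BucGiac-lambdak}, is to observe that for a.e.\ $t$ the trace of $v$ on $\{x_1=t\}$ from inside $\Om$ is controlled, and by boundedness of eigenfunctions (Theorem \ref{th:boundeigen}) in fact $(\text{trace of }v)^2\le \big(\sum_j|a_j|\,\|u_j\|_\infty\big)^2\le C$ pointwise, giving the slice term $\le C\,\hn(\Om\cap\{x_1=t\})$ directly with $C=C(k,\beta,\|u_j\|_\infty)$. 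For (b), since $v^t\to v$ in $L^2$ and $\|v\|_{L^2(\Om)}=1$, choosing $t$ large makes $\int_{\Om_t}v^2\ge 1-\varepsilon$; combined with the fact that for such large $t$ the subtracted tail is itself $\le C\,\hn(\Om\cap\{x_1=t\})$ (again by the coarea/trace bound applied to $v^2$), one gets $R_\beta(v^t)\le R_\beta(v)+C\,\hn(\Om\cap\{x_1=t\})$ uniformly over the unit sphere of $V_k$.

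Finally, taking the maximum over $v\in V_k$ and using $\lkb(\Om_t,\Gamma_t)\le\max_{V_k^t}R_\beta\le\max_{V_k}R_\beta+C\,\hn(\Om\cap\{x_1=t\})=\lkb(\Om,\Gamma)+C\,\hn(\Om\cap\{x_1=t\})$ yields \eqref{eq:cut}. I expect the main obstacle to be making the constant $C$ genuinely independent of $t$ in the surface slice estimate: the naive coarea bound only gives smallness of $\int_{\{x_1=t\}}(\text{trace})^2$ for a.e.\ $t$, not a clean multiple of $\hn(\Om\cap\{x_1=t\})$; the fix is precisely to invoke the $L^\infty$ bound on eigenfunctions from Theorem \ref{th:boundeigen}, which bounds the trace of any unit-norm $v\in V_k$ pointwise by a constant depending only on $N$, $|\Om|$, $\beta$, and then the slice term is bounded by that constant squared times $\hn(\Om\cap\{x_1=t\})$ — uniformly in $t$ and in $v$.
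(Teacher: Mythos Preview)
Your treatment of the slice surface term is fine: invoking the $L^\infty$ bound on eigenfunctions (Theorem \ref{th:boundeigen}) gives $\int_{\Om\cap\{x_1=t\}}(\text{trace of }v)^2\,d\hn\le C\,\hn(\Om\cap\{x_1=t\})$ with $C$ independent of $t$, exactly as in the paper. The gap is in step (b), the denominator. You claim that ``the subtracted tail is itself $\le C\,\hn(\Om\cap\{x_1=t\})$ (again by the coarea/trace bound applied to $v^2$)'', i.e.\ that $\int_{\Om\setminus\Om_t}v^2\,dx\le C\,\hn(\Om\cap\{x_1=t\})$. No coarea or trace argument yields this: coarea only gives $\int_{\Om\setminus\Om_t}v^2\,dx=\int_t^\infty\big(\int_{\Om\cap\{x_1=s\}}v^2\,d\hn\big)\,ds$, and there is no reason this integral over $s>t$ should be bounded by the single slice at $s=t$. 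Without such a bound your Rayleigh estimate stalls at $R_\beta(v^t)\le(\lkb+C\hn)/(1-T_D)$ with $T_D:=\int_{\Om\setminus\Om_t}v^2$, and you cannot conclude \eqref{eq:cut}.

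The paper resolves this not by bounding $T_D$, but by \emph{keeping} the tail energy you discarded and showing it compensates the denominator loss. Writing out $R_\beta(v^t)$ with $1/(1-T_D)\le 1+2T_D$ for $t$ large, the excess over $\lkb(\Om,\Gamma)$ contains, besides the slice term, the combination
\[
2\lkb(\Om,\Gamma)\,T_D\;-\;\int_{\Om\setminus\Om_t}|\nabla U_t|^2\,dx\;-\;\beta\int_{(\partial^*\Om\cup\Gamma)\cap\{x_1>t\}}U_t^2\,d\hn.
\]
One then reflects the restriction $U_t|_{\Om\setminus\Om_t}$ across $\{x_1=t\}$ to obtain a function in $\Theta(A_t,K_t)$ for a symmetrized configuration, and the generalized Faber--Krahn inequality (Remark \ref{rem:faber}) yields
\[
\int_{\Om\setminus\Om_t}|\nabla U_t|^2\,dx+\beta\int_{(\partial^*\Om\cup\Gamma)\cap\{x_1>t\}}U_t^2\,d\hn\;\ge\;\lambda_{1,\beta}(B(t))\,T_D,
\]
with $|B(t)|=2|\Om\setminus\Om_t|\to 0$. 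Since $\lambda_{1,\beta}(B(t))\to+\infty$, for $t$ large the combination above is $\le(2\lkb-\lambda_{1,\beta}(B(t)))T_D<0$, and only the slice term survives. This reflection-plus-Faber--Krahn step is the missing idea in your argument.
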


\begin{proof}
We can assume $|\Om\setminus \Om_t|>0$ for every $t\in\R$, i.e., $\Om$ is unbounded in the positive $x_1$ direction.
\par
Let $u_1,\ldots,u_k\in \Theta(\Om,\Gamma)$ be the first $k$ eigenfunctions of $(\Om,\Gamma)$, which we can assume to form a $L^2$-orthonormal basis. 
For every $t\in\R$, we define the functions $u_{j,t}\in\Theta(\Omega_t,\Gamma_t)$ by setting, for each $j=1,\ldots,k$
$$
u_{j,t}:=u_j1_{\left\{x_1<t\right\}}.
$$
The functions $u_{j,t}$ are linearly independent for $t$ sufficiently large. Let us consider the coefficients $\alpha_{1,t},\ldots,\alpha_{k,t}\in\R$ with $\sum_{j=1}^k\alpha^2_{j,t}=1$. such that the function
$$
U_t:=\sum_{j=1}^k\alpha_{j,t} u_{t,j}
$$
realizes the maximum of the Rayleigh quotient $R_\beta$ on $\text{span}\{u_{1,t},\ldots,u_{k,t}\}$. We get
\begin{multline*}
\lkb(\Omega_t,\Gamma_t)\le\frac{\displaystyle\int_{\Omega_t}|\nabla U_t|^2\:dx+\beta\int_{\partial^*\Omega_t\cup\Gamma_t}U_t^2\:d\hn}{\displaystyle\int_{\Omega_t}U_t^2\:dx}\\ =\frac{\displaystyle\int_{\Omega_t}|\nabla U_t|^2\:dx+\beta\int_{\partial^*\Omega_t\cup\Gamma_t}U_t^2\:d\hn}{\displaystyle 1-\int_{\Omega\setminus\Omega_t}U_t^2\:dx}
\end{multline*}
in view of the $L^2$-orthonormality of $u_1,\ldots,u_k$.
Moreover since
$$
\int_{\Omega\setminus\Omega_t}U_t^2\:dx\le 2\sum_{j=1}^k\int_{\Omega\setminus\Omega_t}u_j^2\:dx\to 0
$$
as $t\to+\infty$, we can assume that for $t$ sufficiently large
$$
\frac{1}{\displaystyle 1-\int_{\Omega\setminus\Omega_t}U_t^2\:dx}\le1+2\int_{\Omega\setminus\Omega_t}U_t^2\:dx\le 2.
$$
We thus obtain the following estimates
\begin{multline}\label{eq:4.27,5bis}
\lkb(\Omega_t,\Gamma_t)\le\left(1+2\int_{\Omega\setminus\Omega_t}U_t^2\:dx\right)\left(\int_{\Omega_t}|\nabla U_t|^2\:dx+\beta\int_{\partial^*\Omega_t\cup\Gamma_t}U_t^2\:d\hn\right)\\
\le\left(1+2\int_{\Omega\setminus\Omega_t}U_t^2\:dx\right) \cdot 
\Bigg(\lkb(\Omega,\Gamma)-\int_{\Omega\setminus\Omega_t}|\nabla U_t|^2\:dx\\
-\beta\int_{(\partial^*\Omega\cup\Gamma)\cap\left\{x_1>t\right\}}U_t^2\:d\hn 
+\beta\int_{\left\{x_1=t\right\}}U_t^2\:d\hn\Bigg)\\
\le \lkb(\Omega,\Gamma)+2\lkb(\Omega,\Gamma)\int_{\Omega\setminus\Omega_t}U_t^2\:dx
-\int_{\Omega\setminus\Omega_t}|\nabla U_t|^2\:dx
-\beta\int_{(\partial^*\Omega\cup\Gamma)\cap\left\{x_1>t\right\}}U_t^2\:d\hn\\
+2\beta\int_{\left\{x_1=t\right\}}U_t^2\:d\hn.
\end{multline}
Let us consider the restrictions of the functions $u_1,\ldots,u_k$ to $\Omega\setminus\Omega_t$ and let us reflect them across the hyperplane $\left\{x_1=t\right\}$. We obtain the functions $w_{j,t}\in \Theta(A_t, K_t)$, where $A_t$ is obtained by symmetrizing $\Om\setminus \{x_1\le t\}$, while $K_t$ is obtained 
by symmetrizing $\Gamma\setminus \{x_1\le t\}$.
Setting $W_t:=\sum_{j=1}^k\alpha_{j,t}w_{j,t}$, in view of the Faber-Krahn inequality \eqref{eq:faber} we may write
\begin{multline*}
\lambda_{1,\beta}(B(t))\le\frac{\displaystyle\int_{ A_t}|\nabla W_t|^2\:dx+\beta\int_{ \partial^* A_t\cup K_t}W_t^2\:d\hn}{\displaystyle\int_{ A_t}W_t^2\:dx}\\
\le\frac{\displaystyle2\int_{\Omega\setminus\Omega_t}|\nabla U_t|^2\:dx+2\beta\int_{(\partial^*\Omega\cup\Gamma)\cap\left\{x_1>t\right\}}U_t^2\:d\hn}{\displaystyle2\int_{\Omega\setminus\Omega_t} U_t^2\:dx}\\
=\frac{\displaystyle\int_{\Omega\setminus\Omega_t}|\nabla U_t|^2\:dx+\beta\int_{(\partial^*\Omega\cup\Gamma)\cap\left\{x_1>t\right\}}U_t^2\:d\hn}{\displaystyle\int_{\Omega\setminus\Omega_t}U_t^2\:dx},
\end{multline*}
where $B(t)$ is the ball of $\R^N$ such that $|B(t)| =|A_t|=2|\Omega\setminus\Omega_t|$, so that
$$
\int_{\Omega\setminus\Omega_t}|\nabla U_t|^2\:dx+\beta\int_{(\partial^*\Omega\cup\Gamma)\cap\left\{x_1>t\right\}}U_t^2\:d\hn\ge \lambda_{1,\beta}(B(t))\int_{\Omega\setminus\Omega_t}U_t^2\:dx.$$
Then, looking at the estimate
\begin{align*}
2\lkb(\Omega,\Gamma)&\int_{\Omega\setminus\Omega_t}U_t^2\:dx-\int_{\Omega\setminus\Omega_t}|\nabla U_t|^2\:dx-\beta\int_{(\partial^*\Omega\cup\Gamma)\cap\left\{x_1>t\right\}}U_t^2\:d\hn\\
&\le[2\lkb(\Omega,\Gamma)- \lambda_{1,\beta}(B(t))]\int_{\Omega\setminus\Omega_t}U_t^2\:dx,
\end{align*}
we deduce that the right hand side is strictly negative for $t$ large enough (since the quantity $\lambda_{1,\beta}(B(t))$ diverges as the measure of $B(t)$ goes to zero). In conclusion, coming back to \eqref{eq:4.27,5bis}, we obtain 
$$
\lkb(\Omega_t,\Gamma_t)\le \lkb(\Omega,\Gamma)+2\beta\int_{\left\{x_1=t\right\}}U^2_t\:d\hn,
$$
and claim \eqref{eq:cut} follows since $U_t$ is bounded in $L^\infty$ by a constant independent of t (as the eigenfunctions $u_j$ are bounded in $L^\infty$).
\end{proof}

We can now state the following boundedness property for configurations which are minimizers of the problem.

\begin{proposition}[\bf Boundedness of minimizers]
\label{pro:bounded2}
Let $(\Omega,\Gamma)\in \as(\R^N)$ be a minimizer for Problem \eqref{eq:jumprobinconst}. Then $\Omega$ is bounded.
\end{proposition}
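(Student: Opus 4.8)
The plan is to argue by contradiction: if $\Omega$ is unbounded, I cut off a far-away slice, dilate the truncated configuration back to generalized perimeter $p$, and compare with $(\Omega,\Gamma)$ using the \emph{quantitative} monotonicity under dilations together with Lemma \ref{lem:cut}. This forces a differential inequality for the volume of the ``tail'' of $\Omega$, which the relative isoperimetric inequality rules out.

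\emph{Setup and slicing.} Since $|\Omega|<+\infty$ and $\Omega$ is unbounded, $\Omega^1$ contains points of arbitrarily large modulus, so after a rotation we may assume $m(t):=|\Omega\cap\{x_1>t\}|>0$ for every $t\in\R$. For a.e.\ $t$ the slice has finite measure $v(t):=\hn(\Omega\cap\{x_1=t\})$, the map $t\mapsto|\Omega\cap\{x_1<t\}|$ is absolutely continuous with $m'(t)=-v(t)$, and $Per(\Omega_t)=Per(\Omega;\{x_1<t\})+v(t)$, where $\Omega_t=\Omega\cap\{x_1<t\}$ and (correcting an evident typo in Lemma \ref{lem:cut}) $\Gamma_t:=\Gamma\cap\{x_1<t\}$; hence, for a.e.\ $t$,
$$
\Prob{\Omega_t}{\Gamma_t}=p-\big(Per(\Omega;\{x_1>t\})-v(t)\big)-2\hn(\Gamma\cap\{x_1\ge t\}).
$$
Testing the constancy of the field $e_1$ on $\Omega\cap\{x_1>t\}$ (the divergence theorem with $\varphi\equiv e_1$) gives $v(t)=\int_{\partial^*\Omega\cap\{x_1>t\}}(\nu_\Omega)_1\,d\hn\le Per(\Omega;\{x_1>t\})$ for a.e.\ $t$; in particular $v(t)\to0$ as $t\to+\infty$ (since $\hn(\partial^*\Omega)<+\infty$), so $\Prob{\Omega_t}{\Gamma_t}\le p$ and $\Prob{\Omega_t}{\Gamma_t}\to p$. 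Moreover this inequality is strict for a.e.\ $t$ with $m(t)>0$: equality would force $(\nu_\Omega)_1=1$ $\hn$-a.e.\ on $\partial^*\Omega\cap\{x_1>t\}$, which, together with $|\Omega|<+\infty$, is impossible. Thus $\Prob{\Omega_t}{\Gamma_t}<p$ for a.e.\ $t$.

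\emph{Rescaling and the differential inequality.} For a.e.\ large $t$ put $s_t:=\big(p/\Prob{\Omega_t}{\Gamma_t}\big)^{1/(N-1)}>1$, so that $(s_t\Omega_t,s_t\Gamma_t)\in\as(\R^N)$ has generalized perimeter $p$. From the scaling identity of Remark \ref{rem:scaling} and the elementary bound $\tilde\lambda_{k,s\beta}(\cdot)\le s\,\tilde\lambda_{k,\beta}(\cdot)$ for $s\ge1$ (immediate from the min--max formula, since $A+s\beta B\le s(A+\beta B)$ whenever $A,B\ge0$) one gets
$$
\lkb(s_t\Omega_t,s_t\Gamma_t)=\frac{1}{s_t^{2}}\,\tilde\lambda_{k,s_t\beta}(\Omega_t,\Gamma_t)\le\frac{1}{s_t}\,\lkb(\Omega_t,\Gamma_t),
$$
while Lemma \ref{lem:cut} (using $\hn(\Omega\cap\{x_1=t\})=v(t)$) yields $\lkb(\Omega_t,\Gamma_t)\le\lkb(\Omega,\Gamma)+C\,v(t)$ with $C$ independent of $t$. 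By minimality, $\lkb(\Omega,\Gamma)\le\lkb(s_t\Omega_t,s_t\Gamma_t)$, hence $(s_t-1)\,\lkb(\Omega,\Gamma)\le C\,v(t)$. Since $\Prob{\Omega_t}{\Gamma_t}\to p$, for $t$ large $s_t-1\ge c_0\big(p-\Prob{\Omega_t}{\Gamma_t}\big)$ for a fixed $c_0=c_0(N,p)>0$, so $p-\Prob{\Omega_t}{\Gamma_t}\le C_1\,v(t)$; plugging this into the formula for $\Prob{\Omega_t}{\Gamma_t}$ and discarding the nonnegative term $2\hn(\Gamma\cap\{x_1\ge t\})$,
$$
Per(\Omega;\{x_1>t\})\le (C_1+1)\,v(t)=-(C_1+1)\,m'(t)\qquad\text{for a.e.\ large }t.
$$

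\emph{Conclusion.} The relative isoperimetric inequality in the half-space $\{x_1>t\}$, applied to $\Omega\cap\{x_1>t\}$, gives $Per(\Omega;\{x_1>t\})\ge c_N\,m(t)^{(N-1)/N}$ for a.e.\ $t$, with $c_N$ purely dimensional. Combining with the previous bound, $-m'(t)\ge c\,m(t)^{(N-1)/N}$ for a.e.\ large $t$, i.e.\ $\tfrac{d}{dt}m(t)^{1/N}\le-c/N$ on $[T,+\infty)$ for some $T$; integrating (legitimate since $m$ is absolutely continuous and positive, so $m^{1/N}$ is absolutely continuous on compact subintervals) yields $m(t)^{1/N}\to-\infty$, contradicting $m\ge0$. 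Hence $\Omega$ is bounded. The delicate point is the middle step: one must make the eigenvalue loss in Lemma \ref{lem:cut} genuinely comparable to the gain obtained by restoring the constraint through a dilation, which requires \emph{both} the quantitative form of the dilation monotonicity \emph{and} the sign information $\Prob{\Omega_t}{\Gamma_t}\le p$ (so that one dilates, rather than shrinks) — the latter being exactly what the divergence identity $v(t)\le Per(\Omega;\{x_1>t\})$ provides.
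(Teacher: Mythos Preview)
Your proof is correct and follows essentially the same strategy as the paper: cut at a hyperplane, dilate the truncated configuration back to perimeter $p$, combine minimality with the quantitative scaling bound $\lkb(s\Omega_t,s\Gamma_t)\le s^{-1}\lkb(\Omega_t,\Gamma_t)$ and the slicing estimate of Lemma~\ref{lem:cut}, and conclude via an isoperimetric/ODE argument on the tail mass. The only cosmetic differences are that the paper justifies $\Prob{\Omega_t}{\Gamma_t}\le p$ by a projection argument rather than the divergence identity, and that it applies the standard isoperimetric inequality to $\partial^*(\Omega\setminus\Omega_t)$ instead of the relative one in the half-space (these are equivalent here, since $Per(\Omega\setminus\Omega_t)=Per(\Omega;\{x_1>t\})+v(t)$ for a.e.\ $t$); your digression on the \emph{strictness} of $\Prob{\Omega_t}{\Gamma_t}<p$ is unnecessary but harmless.
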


\begin{proof}
 Let $(\Omega,\Gamma)$ be a minimizer for \eqref{eq:jumprobinconst} and let us suppose that $\Omega$ is unbounded. It is not restrictive to assume, up to translations and rotations, that $\Om$ is unbounded in the positive direction $x_1$. 
 \par
Let us set
$$
\Omega_t:=\Omega\cap\left\{x_1<t\right\},\qquad\text{and}\qquad \Gamma_t:=\Omega\cap\left\{x_1<t\right\}.
$$ 
Clearly $(\Om_t,\Gamma_t)\in \as(\R^N)$, and by projection on the hyperplane $x_1=t$ we get
$$
\Prob{\Omega_t}{\Gamma_t}\le \Prob{\Omega}{\Gamma}.
$$
We divide the proof in two steps.
 
\vskip10pt\noindent{\bf Step 1.}
We claim that for a.e. $t$ large enough
\begin{equation}
\label{eq:claim-om}
\hn(\partial^*(\Omega \setminus \Om_t))\le C_1\hn(\Omega\cap\{x_1=t\}),
\end{equation}
where $C_1$ is independent of $t$.
\par
Indeed, letting
$$
\eta(t):=\left(\frac{\Prob{\Omega}{\Gamma}}{\Prob{\Omega_t}{\Gamma_t}}\right)^{\frac{1}{N-1}}\ge 1
$$
and considering the dilated couple $(\tilde{\Omega}_t,\tilde{\Gamma}_t)\in \as(\R^N)$ with
$$
\tilde{\Omega}_t:=\eta(t)\Omega_t,\qquad\text{and}\qquad \tilde{\Gamma}_t:=\eta(t)\Gamma_t,
$$
the optimality of $(\Omega,\Gamma)$ and the admissibility of $(\tilde{\Omega}_t,\tilde{\Gamma}_t)$ for Problem \eqref{eq:jumprobinconst} yield 
\begin{equation}
\label{eq:lkb-dil}
\lkb(\Omega,\Gamma)\le\lkb(\tilde{\Omega}_t,\tilde{\Gamma}_t)=\frac{1}{\eta(t)^2}\tilde \lambda_{k,\eta(t)\beta}(\Omega_t,\Gamma_t)\le\frac{1}{\eta(t)}
\lkb(\Omega_t,\Gamma_t),
\end{equation}
where we used the rescaling property of Remark \ref{rem:scaling}.
Since for a.e. $t\in\R$
$$
\Prob{\Omega_t}{\Gamma_t}=\Prob{\Omega}{\Gamma}-\hn((\partial^*\Omega\cup\Gamma)\cap\{x_1 > t\})+\hn(\Omega\cap\{x_1=t\}),
$$
by the very definition of $\eta(t)$ we get
$$\eta(t)=\left(1+\frac{\hn((\partial^*\Omega\cup\Gamma)\cap\{x_1>t\})-\hn(\Omega\cap\{x_1=t\})}{\Prob{\Omega_t}{\Gamma_t}}\right)^{\frac{1}{N-1}}.$$
Since by projection on the hyperplane $x_1=t$
$$
\hn((\partial^*\Omega\cup\Gamma)\cap\{x_1>t\})\ge\hn(\partial^*\Omega\cap\{x_1>t\})\ge\hn(\Omega\cap\{x_1=t\}),
$$
we deduce
$$
\hn((\partial^*\Omega\cup\Gamma)\cap\{x_1>t\})-\hn(\Omega\cap\{x_1=t\})\to 0
$$
as $t\to+\infty$. Hence we get for a.e. $t$ large enough
\begin{equation}
\label{eq:eta}
\eta(t)\ge 1+ C_2 \left(\hn((\partial^*\Omega\cup\Gamma)\cap\{x_1>t\})-\hn(\Omega\cap\{x_1=t\})\right)
\end{equation}
for some positive constant $C_2$ independent of $t$.
In view of estimates \eqref{eq:lkb-dil} and \eqref{eq:cut} we thus get
\begin{equation*}
\eta(t)\lkb(\Omega,\Gamma)\le\lkb(\Omega_t,\Gamma_t)\le\lkb(\Omega,\Gamma)+C\hn(\Omega\cap\{x_1=t\})
\end{equation*}
so that using \eqref{eq:eta} we infer
$$
\hn((\partial^*\Omega\cup\Gamma)\cap\{x_1>t\})-\hn(\Omega\cap\{x_1=t\})\le C_3\hn(\Omega\cap\{x_1=t\})
$$
for a suitable $C_3>0$  independent of $t$, which readily implies claim \eqref{eq:claim-om}.

\vskip10pt\noindent{\bf Step 2.}
In view of \eqref{eq:claim-om}, using the isoperimetric inequality we get for a.e. $t$ large enough
$$
|\Omega\setminus\Omega_t|^{\frac{N-1}{N}}\le C_4\hn(\Omega\cap\{x_1=t\}),
$$
for some $C_4>0$ independent of $t$.
By setting $g(t):=|\Omega\setminus\Omega_t|$ we get for a.e. $t$ that $\hn(\Omega\cap\{x_1=t\})=-g'(t)$ and thus (recall that we are assuming by contradiction $g(t)\not=0$ for $t$ sufficiently large)
$$
\frac{g'(t)}{g(t)^{\frac{N-1}{N}}}\le-\frac{1}{C_4}.
$$
But then if $t_0$ is sufficiently large
$$
-Ng(t_0)^{\frac{1}{N}}=\int_{t_0}^{+\infty}\frac{g'(t)}{g(t)^{\frac{N-1}{N}}}\:dt=-\infty
$$
which is a contradiction.
\end{proof}

%
We are now in a position to prove Theorem \ref{th:main1}.

\begin{proof}[Proof of Theorem \ref{th:main1}]
Using the monotonicity under dilations of Remark \ref{rem:scaling}, problem \eqref{eq:jumprobinconst} is equivalent to
\begin{equation}\label{eq:jumprobinconst2}
\min\left\{\lkb(\Omega,\Gamma): (\Omega,\Gamma)\in\mathcal{A}(\R^N) \text{ with }\Prob{\Omega}{\Gamma} \le p\right\}.
\end{equation}
If minimizers $(\Om,\Gamma)$ exist, then clearly $\Prob{\Om}{\Gamma}=p$, while $\Om$ is bounded according to Proposition \ref{pro:bounded2}. Finally, property \eqref{eq:den1} follows from Theorem \ref{thm:den}.
\par
Let us proceed, as usual for these optimization problems, by induction on $k\in\N$. For $k=1$, according to Remark \ref{rem:1ball}, minimizers are balls of perimeter $p$.
\par
Let us now assume that a minimizer exists for every $j<k$. Let $(\Omega_n,\Gamma_n)_n$ be a minimizing sequence for problem \eqref{eq:jumprobinconst2}. 
We can assume up to a subsequence 
$$
\lim_{n\to+\infty}|\Omega_n|=m,\quad \text{with $0<m<+\infty$}.
$$
Indeed, thanks to the isoperimetric inequality we have the upper bound
$$
|\Omega_n|^\frac{N-1}{N}\le C Per(\Omega_n)\le C \Prob{\Omega_n}{\Gamma_n}\le Cp.
$$
On the other hand, if $|\Omega_n|$ vanishes, we would obtain thanks to the Faber-Krahn inequality \eqref{eq:faber}
$$
\lkb(\Omega_n,\Gamma_n)\ge\tilde \lambda_{1,\beta}(\Omega_n,\Gamma_n)\ge\lambda_{1,\beta}(B_n)\to +\infty,
$$
where $B_n$ is the ball having the same measure of $\Omega_n$,  against the fact that $(\Om_n,\Gamma_n)$ is a minimizing sequence.
\par
Let us apply a concentration-compactness argument to the sequence $(1_{\Omega_n})_{n\in\N}$. For every $r>0$ let us consider the monotone increasing functions $\alpha_n:[0,+\infty[\to [0,+\infty[$
\begin{equation*}
\alpha_n(r):=\sup_{y \in \R^N}|\Om_n \cap Q_r(y)|,
\end{equation*}
where $Q_r(y)$ is the cube centered at $y$ with side $r$. Up to a subsequence, in view of Helly's theorem, we may assume that
\begin{equation*}
\alpha_n \to \alpha
\qquad\text{pointwise on }[0,+\infty[
\end{equation*}
for a suitable monotone increasing function $\alpha:[0,+\infty[\to [0,+\infty[$. 
\par
The following situations may occur.
\begin{itemize}
\item[(a)] {\it Vanishing}: $\lim_{r\to+\infty}\alpha(r)=0$;
\item[(b)] {\it Dichotomy}: $\lim_{r\to+\infty}\alpha(r)=\bar \alpha \in ]0,m[$;
\item[(c)] {\it Compactness}: $\lim_{r\to+\infty}\alpha(r)=m$.
\end{itemize}
Let us deal with the three cases separately.

\vskip10pt\noindent{\bf Step 1: Vanishing cannot occur.} Indeed, if it was the case, one would have for every $r>0$
\begin{equation}
\label{eq:starvanish}
\sup_{y\in\R^N}\left|\Omega_n\cap Q_r(y)\right|\to 0.
\end{equation}
Let $u_{k,n}$ be a $L^2$-normalized $k$-th eigenfunction of $(\Omega_n,\Gamma_n)$. Since
\begin{multline*}
\int_{\R^N}|\nabla u_{k,n}|^2\:dx+\int_{J_{u_{n,k}}}\left[\gamma^2_l(u_{k,n})+\gamma^2_r(u_{k,n})\right]\:d\hn \\
\le \int_{\Omega_n}|\nabla u_{k,n}|^2\:dx+\int_{\partial\Omega_n\cup\Gamma_n}\left[\gamma^2_l(u_{k,n})+\gamma^2_r(u_{k,n})\right]\:d\hn=\lambda_{k,\beta}(\Omega_n,\Gamma_n)\le C,
\end{multline*}
in view of \cite[Lemma 4]{BucGiac} applied to both the negative and positive parts of $u_{k,n}$, there exists $y_n\in\R^N$ such that
$$
\left|\Omega_n\cap Q_1(y_n)\right| \ge \left|supp(u_{k,n})\cap Q_1(y_n)\right|\ge C'_N\left(\frac{1}{2C+2}\right)^N>0,
$$
against \eqref{eq:starvanish}.

\vskip10pt\noindent{\bf Step 2: Compactness.} If compactness occurs, there exists a set of finite perimeter $\Omega \subset\R^N$ such that
$$
1_{\Omega_n}\to 1_\Omega \qquad\text{strongly in }L^1(\R^N).
$$ 
Let $\Gamma$ be given by Theorem \ref{thm:lsc}, so that $(\Om,\Gamma)\in \as(\R^N)$ with
$$
\Prob{\Om}{\Gamma}\le \liminf_n \Prob{\Om_n}{\Gamma_n}\le p
$$
and, according to Theorem \ref{pro:lscrkb},
$$
\lambda_{k,\beta}(\Omega,\Gamma)\le\liminf_{n\to+\infty}\lambda_{k,\beta}(\Omega_n,\Gamma_n).
$$
We infer that $(\Omega,\Gamma)$ is a minimizer for problem \eqref{eq:jumprobinconst2}.

\vskip10pt\noindent{\bf Step 3: Dichotomy.} 
Let dichotomy occur. Then there exists $\tilde\alpha\in]0,m[$ such that the following assertion holds true: we can find $x_n\in\R^N$ and $0<r_n<R_n$, $R_n-r_n\to+\infty$, such that setting
$$
\Omega_{n,1}:=\Omega_n\cap B_{r_n}(x_n),\qquad \Gamma_{n,1}:=\Gamma_n\cap B_{r_n}(x_n)
$$
and
$$
\Omega_{n,2}:=\Omega_n\setminus B_{R_n}(x_n),\qquad \Gamma_{n,2}:=\Gamma_n\setminus B_{R_n}(x_n),
$$
we have
$$
\left||\Omega_{n,1}|-\tilde\alpha\right|\to 0,\qquad \left||\Omega_{n,2}|-(m-\tilde\alpha)\right|\to 0,
$$
with
$$
\hn(\Omega_n\cap\partial B_{r_n}(x_n))\to 0,\qquad \hn(\Omega_n\cap\partial B_{R_n}(x_n))\to 0.
$$
Notice that 
\begin{equation}\label{eq:dc2}
\Prob{\Omega_n}{\Gamma_n}\ge \Prob{\Omega_{n,1}}{\Gamma_{n,1}}+\Prob{\Omega_{n,2}}{\Gamma_{n,2}}-\e_n
\end{equation}
with $\e_n\to 0$. Up to a subsequence we may assume
\begin{equation}
\label{eq:dcper}
\Prob{\Omega_{n,1}}{\Gamma_{n,1}} \to p_1>0\qquad \text{and}\qquad
\Prob{\Omega_{n,2}}{\Gamma_{n,2}} \to p_2>0
\end{equation}
with $p_1+p_2\le p$.
\par
Now, by testing the Rayleigh quotient for $\Omega_{n,1}\cup\Omega_{n,2}$ on the eigenfunctions of $\Omega_n$, taking into account their uniform boundedness in $L^\infty$ given by Theorem \ref{th:boundeigen} and Remark \ref{rem:disjoint} which characterizes the spectrum of $\Om_n^1\cup \Om_n^2$ we get
\begin{equation}\label{eq:dc1}
\begin{split}
\lkb(\Omega_n,\Gamma_n)&\ge\lkb(\Omega_{n,1}\cup\Omega_{n,2},\Gamma_{n,1}\cup\Gamma_{n,2})-\delta_n\\
&=\min_{i=0,\ldots,k}\max\left\{\tilde\lambda_{i,\beta}(\Omega_{n,1},\Gamma_{n,1}),\tilde\lambda_{k-i,\beta}(\Omega_{n,2},\Gamma_{n,2})\right\}-\delta_n\\
&=\max\left\{\tilde \lambda_{\bar{i},\beta}(\Omega_{n,1},\Gamma_{n,1}),\tilde \lambda_{k-{\bar{i}},\beta}(\Omega_{n,2},\Gamma_{n,2})\right\}-\delta_n,
\end{split}
\end{equation}
where $\delta_n\to 0$ and $\bar{i}$ is independent of $n$ (up to subsequences).
\par
We point out that $\bar{i}<k$. Otherwise we would have
$$
\lkb(\Omega_n,\Gamma_n) \ge \lkb(\Omega_{n,1},\Gamma_{n,1}) -\delta_n
$$
and, in view of \eqref{eq:dc2} and \eqref{eq:dcper} ,
$$
\Prob{\Omega_{n,1}}{\Gamma_{n,1}})+\e<\Prob{\Omega_n}{\Gamma_n},
$$
for some $\e>0$: this contradicts the fact that $(\Omega_n,\Gamma_n)$ is a minimizing sequence in view of the monotonicity under dilations.
\par
Let $(\Omega_1,\Gamma_1)$ and $(\Omega_2,\Gamma_2)$ be minimizing couples for problem \eqref{eq:jumprobinconst2} relative to $\tilde \lambda_{\bar{i},\beta}$ with perimeter constraint $p_1$ and $\tilde \lambda_{k-{\bar{i}},\beta}$ with perimeter constraint $p_2$ respectively, whose existence is guaranteed by our induction assumption. We claim that
\begin{equation}\label{eq:dc3}
\tilde \lambda_{\bar{i},\beta}(\Omega_1,\Gamma_1)\le\liminf_{n\to+\infty}\tilde \lambda_{\bar{i},\beta}(\Omega_{n,1},\Gamma_{n,1})
\end{equation}
and
\begin{equation}\label{eq:dc4}
\tilde\lambda_{k-\bar{i},\beta}(\Omega_2,\Gamma_2)\le\liminf_{n\to+\infty}\tilde\lambda_{k-\bar{i},\beta}(\Omega_{n,2},\Gamma_{n,2}).
\end{equation}
Since $\Om_1,\Om_2$ are bounded sets by Proposition \ref{pro:bounded2}, we can assume that they are at positive distance. 
Let us set
$$
\Omega:=\Omega_1\cup\Omega_2\qquad\text{and}\qquad \Gamma:=\Gamma_1\cup\Gamma_2.
$$
Clearly $(\Om,\Gamma)\in \as(\R^N)$ with
$$
\Prob{\Omega}{\Gamma}=\Prob{\Omega_1}{\Gamma_1}+\Prob{\Omega_2}{\Gamma_2}=p_1+p_2\le p,
$$
while thanks to Remark \ref{rem:disjoint}
$$
\lkb(\Omega,\Gamma)\le\max\left\{\tilde\lambda_{\bar{i},\beta}(\Omega_1,\Gamma_1),\tilde\lambda_{k-{\bar{i}},\beta}(\Omega_2,\Gamma_2)\right\}.
$$
We infer that $(\Omega,\Gamma)$ is an admissible couple for the minimization of $\lkb$ and, taking into account \eqref{eq:dc1} and claims \eqref{eq:dc3}, \eqref{eq:dc4} we have
\begin{multline*}
\lkb(\Omega,\Gamma)\le\max\left\{\tilde\lambda_{\bar{i},\beta}(\Omega_1,\Gamma_1),\tilde\lambda_{k-{\bar{i}},\beta}(\Omega_2,\Gamma_2)\right\}\\ 
\le\liminf_{n\to+\infty}\left(\max\left\{\tilde\lambda_{\bar{i},\beta}(\Omega_{n,1},\Gamma_{n,1}),\tilde\lambda_{k-{\bar{i}},\beta}(\Omega_{n,2},\Gamma_{n,2})\right\}\right)\\
\le\liminf_{n\to+\infty}\lkb(\Omega_n,\Gamma_n).
\end{multline*}
We conclude that $(\Omega,\Gamma)$ is a minimizer for Problem \eqref{eq:jumprobinconst2}. 
\par
In order to conclude, we need to check claims \eqref{eq:dc3} and \eqref{eq:dc4}. Let us prove the first one, the other being similar.
\par
If $\Prob{\Omega_{n,1}}{\Gamma_{n,1}}\le p_1$, there is nothing to prove in view of the minimality of $\Omega_1$. On the other hand, if $\Prob{\Omega_{n,1}}{\Gamma_{n,1}}>p_1$, by \eqref{eq:dcper} there exists $\varepsilon_n>0$, $\varepsilon_n\to 0$, such that $\Prob{\Omega_{n,1}}{\Gamma_{n,1}}=p_1+\varepsilon_n$. Let us define the quantity
$$t_n:=\left(\frac{p_1}{p_1+\varepsilon_n}\right)^{\frac{1}{N-1}}<1$$
and the sets
$$
\tilde{\Omega}_{n,1}:=t_n\Omega_{n,1},\qquad \tilde{\Gamma}_{n,1}:=t_n\Gamma_{n,1}.
$$
Clearly $\Prob{\tilde{\Omega}_{n,1}}{\tilde{\Gamma}_{n,1}})=p_1$, so that
\begin{equation}\label{eq:dc5}
\tilde\lambda_{\bar{i},\beta}(\Omega_1,\Gamma_1)\le\tilde\lambda_{\bar{i},\beta}(\tilde{\Omega}_{n,1},\tilde{\Gamma}_{n,1})
\end{equation}
thanks the minimality of $(\Omega_1,\Gamma_1)$. On the other hand, in view of the scaling property given by Remark \ref{rem:scaling}
it holds
\begin{equation}\label{eq:dc6}
\tilde\lambda_{\bar{i},\beta}(\tilde{\Omega}_{n,1},\tilde{\Gamma}_{n,1})=\frac{1}{t_n^2}\tilde\lambda_{\bar{i},t_n\beta}(\Omega_{n,1},\Gamma_{n,1})\le\frac{1}{t_n^2}\tilde\lambda_{\bar{i},\beta}(\Omega_{n,1},\Gamma_{n,1}).
\end{equation}
Putting together \eqref{eq:dc5} and \eqref{eq:dc6} and observing that $t_n\to 1$ we finally get \eqref{eq:dc3}.
\end{proof}

\subsection{Proof of Theorem \ref{thm:main3}}
\label{sec:main3}

In this section we provide the proof of Theorem \ref{thm:main3}. To this aim, we adapt to our context an induction argument applied, for instance, in \cite{de2014existence,Na}. Using the monotonicity under dilations of Remark \ref{rem:scaling}, problem \eqref{eq:pb-gen} is equivalent to
\begin{equation}\label{eq:pb-gen2}
\min\left\{F(\Omega,\Gamma): (\Omega,\Gamma)\in\mathcal{A}(\R^N) \text{ with }\Prob{\Omega}{\Gamma} \le p\right\}.
\end{equation}
It is convenient to frame \eqref{eq:pb-gen2} within a larger class of problems. 
\par
Let us fix $0\le k_{\ell,2}<k_\ell$ and $0<\gamma_1\le \gamma_2\le \dots \le \gamma_{k_{\ell,2}}$, and let $k_{\ell,1}:=k_\ell-k_{\ell,2}$. For every $(\Om,\Gamma)\in \as(\R^N)$, let us consider the
ordered $k_\ell$-tuple
$$
\mu(\Om,\Gamma):=(\tilde \lambda_{1,\beta}(\Om,\Gamma),\dots, \tilde \lambda_{k_{\ell,1},\beta}(\Om,\Gamma),\gamma_1,\dots, \gamma_{k_{\ell,2}})^{ord}
$$
(in the case $k_{\ell,2}=0$, we mean that no $\gamma$ is involved, so that $\mu(\Om,\Gamma)=(\tilde \lambda_{1,\beta}(\Om,\Gamma),\dots, \tilde \lambda_{k_\ell,\beta}(\Om,\Gamma)$). If we set
$$
F_\gamma(\Om,\Gamma):=f(\mu_{k_1}(\Om,\Gamma),\dots, \mu_{k_\ell}(\Om,\Gamma)),
$$
then problem \eqref{eq:pb-gen2} is a particular case of the following one:
\begin{equation}\label{eq:pb-gen-gamma}
\min\left\{F_\gamma(\Omega,\Gamma): (\Omega,\Gamma)\in\mathcal{A}(\R^N) \text{ with }\Prob{\Omega}{\Gamma} \le p\right\}.
\end{equation}

Considering the more general problem \eqref{eq:pb-gen-gamma} makes it easier to formulate the induction scheme needed to prove existence: more precisely, the dichotomy case for problem \eqref{eq:pb-gen} forces to consider problems of the form \eqref{eq:pb-gen-gamma}.
\par
The following result holds true.

\begin{theorem}
\label{thm:main3-gen}
Problem \eqref{eq:pb-gen-gamma} admits a bounded minimizer.
\end{theorem}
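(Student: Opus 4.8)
Since \eqref{eq:pb-gen-gamma} is already stated with the relaxed constraint $\Prob{\Omega}{\Gamma}\le p$, no preliminary reduction is needed. I would argue by induction on the number $k_{\ell,1}=k_\ell-k_{\ell,2}\ge 1$ of \emph{free} eigenvalues entering the tuple $\mu$ — this is precisely why the auxiliary frozen values $\gamma_1\le\dots\le\gamma_{k_{\ell,2}}$ were introduced, as the dichotomy step of the concentration–compactness argument will replace one sub-problem's contribution by a (bounded) list of frozen values for the other. For fixed $k_{\ell,1}$ the induction hypothesis reads: for \emph{every} choice of the remaining data ($\ell$, the indices $k_1\le\dots\le k_\ell$, the function $f$ satisfying $(f1)$–$(f2)$, the frozen values, and $p>0$) problem \eqref{eq:pb-gen-gamma} admits a minimizer with $\Omega$ bounded. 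The base case $k_{\ell,1}=1$ is immediate: by $(f2)$ (monotonicity of $f$ in each variable) the map $(\Omega,\Gamma)\mapsto F_\gamma(\Omega,\Gamma)=f(\mu_{k_1},\dots,\mu_{k_\ell})$, with $\mu=\mathrm{sort}(\tilde\lambda_{1,\beta}(\Omega,\Gamma),\gamma_1,\dots,\gamma_{k_{\ell,2}})$, is a nondecreasing function of $\tilde\lambda_{1,\beta}(\Omega,\Gamma)$ alone; hence by the generalized Faber–Krahn inequality \eqref{eq:faber} and the monotonicity under dilation of Remark~\ref{rem:scaling}, a ball of perimeter $p$ is a bounded minimizer, exactly as in Remark~\ref{rem:1ball}.

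\textbf{Boundedness of minimizers.} Before the existence part I would prove that any minimizer $(\Omega,\Gamma)$ of \eqref{eq:pb-gen-gamma} has $\Omega$ bounded, adapting Proposition~\ref{pro:bounded2}. The cutting construction of Lemma~\ref{lem:cut} is run simultaneously on the first $k_{\ell,1}$ eigenfunctions, giving for a.e.\ large $t$ the estimate $\tilde\lambda_{h,\beta}(\Omega_t,\Gamma_t)\le\tilde\lambda_{h,\beta}(\Omega,\Gamma)+C\,\hn(\Omega\cap\{x_1=t\})$ for all $h\le k_{\ell,1}$ at once; the Lipschitz continuity of $f$ upgrades this to $F_\gamma(\Omega_t,\Gamma_t)\le F_\gamma(\Omega,\Gamma)+C'\hn(\Omega\cap\{x_1=t\})$. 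Since $\Prob{\Omega_t}{\Gamma_t}\le p$, dilating $(\Omega_t,\Gamma_t)$ by $\eta(t)=(p/\Prob{\Omega_t}{\Gamma_t})^{1/(N-1)}\ge 1$ restores the perimeter, and because $k_{\ell,1}\ge 1$ and the eigenvalues are bounded below by a positive constant depending only on $N$, $\beta$ and the isoperimetrically bounded volume (e.g.\ via Lemma~\ref{lem:L2}), Remark~\ref{rem:scaling} together with $(f2)$ yields $F_\gamma(\eta(t)\Omega_t,\eta(t)\Gamma_t)\le F_\gamma(\Omega,\Gamma)+C'\hn(\Omega\cap\{x_1=t\})-c\,(\eta(t)-1)$. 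Optimality of $(\Omega,\Gamma)$ then forces $\eta(t)-1\le C''\hn(\Omega\cap\{x_1=t\})$, and, as in Proposition~\ref{pro:bounded2}, the isoperimetric inequality and the ODE argument on $g(t)=|\Omega\setminus\Omega_t|$ contradict unboundedness of $\Omega$.

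\textbf{Existence via concentration–compactness.} Let $(\Omega_n,\Gamma_n)$ be a minimizing sequence; by $(f1)$ boundedness of $F_\gamma$ forces boundedness of all involved eigenvalues, hence (isoperimetric inequality and \eqref{eq:faber}, as in Theorem~\ref{th:main1}) $|\Omega_n|\to m\in(0,\infty)$ up to subsequences. Running the dichotomy alternative on $1_{\Omega_n}$ exactly as in Theorem~\ref{th:main1}: \emph{vanishing} is excluded by the support estimate \cite[Lemma 4]{BucGiac} applied to a $k_{\ell,1}$-th eigenfunction; in the \emph{compactness} case $1_{\Omega_n}\to 1_\Omega$ in $L^1(\R^N)$ for some set of finite perimeter $\Omega$, and Theorem~\ref{thm:lsc}(i), the lower semicontinuity of each $\tilde\lambda_{h,\beta}$ (Theorem~\ref{pro:lscrkb}) and the monotonicity and continuity of $f$ give $\Prob{\Omega}{\Gamma}\le p$ and $F_\gamma(\Omega,\Gamma)\le\liminf_n F_\gamma(\Omega_n,\Gamma_n)$ for the limit crack $\Gamma$, so $(\Omega,\Gamma)$ is a minimizer, bounded by the previous step. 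In the \emph{dichotomy} case one splits $\Omega_n$ into $\Omega_{n,1}$ (inside a ball) and $\Omega_{n,2}$ (outside a larger ball) with $\Prob{\Omega_{n,1}}{\Gamma_{n,1}}\to p_1>0$, $\Prob{\Omega_{n,2}}{\Gamma_{n,2}}\to p_2>0$, $p_1+p_2\le p$, and, using Remark~\ref{rem:disjoint}, Theorem~\ref{th:boundeigen} and the testing argument of Theorem~\ref{th:main1}, obtains $F_\gamma(\Omega_n,\Gamma_n)\ge F_\gamma(\Omega_{n,1}\cup\Omega_{n,2},\Gamma_{n,1}\cup\Gamma_{n,2})-\delta_n$ with $\delta_n\to 0$. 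Passing to a subsequence so that of the first $k_{\ell,1}$ eigenvalues of the disjoint union exactly $j$ come (stably) from piece $1$, the degenerate values $j\in\{0,k_{\ell,1}\}$ are excluded: then one piece alone realizes $\mu$ while having perimeter bounded away from $p$, and dilating it up to perimeter $p$ strictly decreases $F_\gamma$ by a fixed amount (again Remark~\ref{rem:scaling}, $(f2)$, and the eigenvalue lower bound), contradicting that $(\Omega_n,\Gamma_n)$ is minimizing; hence $1\le j\le k_{\ell,1}-1$.

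\textbf{Closing the induction, and the main obstacle.} After extracting further subsequences, $\tilde\lambda_{h,\beta}(\Omega_{n,1},\Gamma_{n,1})\to a_h$ ($h\le j$) and $\tilde\lambda_{h,\beta}(\Omega_{n,2},\Gamma_{n,2})\to b_h$ ($h\le k_{\ell,1}-j$). Apply the induction hypothesis to problem \eqref{eq:pb-gen-gamma} for piece $1$ with perimeter level $p_1$ and frozen set $\{b_1,\dots,b_{k_{\ell,1}-j}\}\cup\{\gamma_1,\dots,\gamma_{k_{\ell,2}}\}$ (which has $j<k_{\ell,1}$ free eigenvalues) to get a bounded minimizer $(\Omega_1^{*},\Gamma_1^{*})$, and then to piece $2$ with perimeter level $p_2$ and frozen set $\{\tilde\lambda_{1,\beta}(\Omega_1^{*},\Gamma_1^{*}),\dots,\tilde\lambda_{j,\beta}(\Omega_1^{*},\Gamma_1^{*})\}\cup\{\gamma_1,\dots,\gamma_{k_{\ell,2}}\}$ (which has $k_{\ell,1}-j<k_{\ell,1}$ free eigenvalues) to get a bounded minimizer $(\Omega_2^{*},\Gamma_2^{*})$. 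Placing $\Omega_1^{*},\Omega_2^{*}$ at positive distance and setting $\Omega=\Omega_1^{*}\cup\Omega_2^{*}$, $\Gamma=\Gamma_1^{*}\cup\Gamma_2^{*}$, one has $\Prob{\Omega}{\Gamma}=p_1+p_2\le p$; since the spectrum of a disjoint union is the merge of the two spectra (Remark~\ref{rem:disjoint}), $\mu(\Omega,\Gamma)$ is dominated componentwise by the nondecreasing rearrangement of $(\tilde\lambda_{h,\beta}(\Omega_1^{*},\Gamma_1^{*}))_{h\le j}$, $(\tilde\lambda_{h,\beta}(\Omega_2^{*},\Gamma_2^{*}))_{h\le k_{\ell,1}-j}$ and the $\gamma$'s, so monotonicity of $f$ gives $F_\gamma(\Omega,\Gamma)$ at most piece $2$'s optimal value at $(\Omega_2^{*},\Gamma_2^{*})$; chaining with the minimality of $(\Omega_2^{*},\Gamma_2^{*})$ tested against $(\Omega_{n,2},\Gamma_{n,2})$ (after a harmless dilation making its perimeter $\le p_2$), the identity of piece $2$'s value at $(\Omega_2^{*},\Gamma_2^{*})$ with piece $1$'s value at $(\Omega_1^{*},\Gamma_1^{*})$, the minimality of $(\Omega_1^{*},\Gamma_1^{*})$ tested against $(\Omega_{n,1},\Gamma_{n,1})$, and continuity of $f$, one gets $F_\gamma(\Omega,\Gamma)\le f(\mathrm{sort}(a_1,\dots,a_j,b_1,\dots,b_{k_{\ell,1}-j},\gamma_1,\dots,\gamma_{k_{\ell,2}}))=\lim_n F_\gamma(\Omega_{n,1}\cup\Omega_{n,2},\Gamma_{n,1}\cup\Gamma_{n,2})\le\liminf_n F_\gamma(\Omega_n,\Gamma_n)$, so $(\Omega,\Gamma)$ is a bounded minimizer, completing the induction. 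The delicate, and genuinely new, point is this last step: choosing the induction parameter so that \emph{both} sub-problems are strictly simpler (only possible thanks to the frozen $\gamma$'s), excluding the degenerate splittings $j\in\{0,k_{\ell,1}\}$ by the active-constraint/dilation mechanism, and verifying that two independently built minimizers assemble into an admissible competitor whose $F_\gamma$ does not exceed the liminf — which rests on the disjoint-union spectral formula of Remark~\ref{rem:disjoint} and the componentwise monotonicity in $(f2)$. The remaining ingredients (boundedness, exclusion of vanishing, the compactness case) are routine adaptations of Theorem~\ref{th:main1}.
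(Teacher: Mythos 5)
Your overall scheme — induction on the number $k_{\ell,1}$ of free eigenvalues, concentration–compactness, exclusion of the degenerate splittings $j\in\{0,k_{\ell,1}\}$ by the dilation/perimeter-threshold mechanism, and assembly of two sub-minimizers via Remark~\ref{rem:disjoint} and the componentwise domination of ordered tuples — is essentially the same as the paper's (the paper writes "induction on $k_\ell$'', but what actually decreases in the sub-problems is $k_{\ell,1}$, exactly as you chose). The existence part, including the dichotomy step, is sound.

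The genuine gap is in the boundedness step, and it is exactly the pitfall the paper warns about in the paragraph after the theorem statement. You claim that \emph{every} minimizer $(\Omega,\Gamma)$ of \eqref{eq:pb-gen-gamma} is bounded, deriving the key quantitative inequality from
$$
F_\gamma(\eta(t)\Omega_t,\eta(t)\Gamma_t)\le F_\gamma(\Omega,\Gamma)+C'\hn(\Omega\cap\{x_1=t\})-c\,(\eta(t)-1)
$$
via $(f2)$ and a positive lower bound on the free eigenvalues. But $(f2)$ is a lower Lipschitz bound for $f$ applied to the $\ell$-vector $\bigl(\mu_{k_1},\dots,\mu_{k_\ell}\bigr)$, and that vector need not change at all under dilation: if at every selected index $k_j$ the sorted tuple $\mu$ produces a frozen value $\gamma_h$ both for $(\Omega_t,\Gamma_t)$ and for $(\eta(t)\Omega_t,\eta(t)\Gamma_t)$ (i.e.\ the free eigenvalues are ``shadowed'' by the $\gamma$'s at the positions $k_1,\dots,k_\ell$), then $f\bigl(\mu_{k_1}(\tilde\Omega_t),\dots\bigr)=f\bigl(\mu_{k_1}(\Omega_t),\dots\bigr)$ and there is no $-c(\eta(t)-1)$ gain. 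In that case the ODE on $g(t)=|\Omega\setminus\Omega_t|$ is not triggered, and indeed the paper's own remark gives an explicit example with unbounded minimizers (take $F=\lambda_{k,\beta}$ and freeze a $\gamma$ below the minimum). The correct statement is the weaker one the paper proves in its Step 1: \emph{if} a minimizer exists \emph{then} a bounded one exists. The mechanism that repairs your argument is the dichotomy inside Step 1 itself: for $t$ large either some index $k\in\{1,\dots,k_\ell\}$ (occupied by a free eigenvalue $\tilde\lambda_{i,\beta}$) satisfies \eqref{eq:mu-min}, in which case the lower Lipschitz bound applies at that coordinate and the ODE argument runs; or no such index exists, in which case $F_\gamma(\tilde\Omega_t,\tilde\Gamma_t)\le F_\gamma(\Omega,\Gamma)$, so $(\tilde\Omega_t,\tilde\Gamma_t)$ is also a minimizer and it is bounded in the positive $x_1$-direction — iterating over directions produces a bounded minimizer. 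You need this case distinction; without it the boundedness claim you rely on in the compactness case of the concentration–compactness argument is simply false.

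One smaller point: the two cited ingredients ($L^1$-distance of ordered tuples and the lower bound $\tilde\lambda_{1,\beta}\ge C_3>0$) are correct, but the fact you need is that the decrease is detected \emph{at one of the positions $k_1,\dots,k_\ell$}, not merely somewhere in the $k_\ell$-tuple; making that precise is exactly what \eqref{eq:mu-min} does in the paper's proof.
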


Notice that in principle not every minimizer is bounded: this is because some coefficients are "frozen" and set equal to $\gamma_1,\dots,\gamma_{k_{\ell,2}}$ (think of $F(\Om,\Gamma)=\tilde \lambda_{k,\beta}(\Om,\Gamma)$ and fix $\gamma_1<\min\{\tilde \lambda_{k,\beta}(\Om,\Gamma)\,:\, \Prob{\Om}{\Gamma}\le p\}$).

\begin{proof}
Let us divide the proof in two steps.

\vskip10pt\noindent{\bf Step 1.}
Let us prove that if problem \eqref{eq:pb-gen-gamma} admits a minimizer $(\Om,\Gamma)\in \as(\R^N)$, then it admits also a bounded minimizer. 
\par
For $t\in \R$ let us set
$$
\Om_t:=\Om \cap \{x_1<t\}\qquad\text{and}\qquad \Gamma_t:=\Gamma\cap \{x_1<t\}.
$$
We will show that either $(\Om,\Gamma)$ is bounded in the positive direction $x_1$, or $(\Om_t,\Gamma_t)$ is a minimizer for some $t$ large enough: iterating the argument with the negative $x_1$-direction, and repeating the considerations for the other directions, the result follows.
\par
Let us assume that $\Om$ is unbounded in the positive $x_1$ direction.
Letting
$$
\eta(t):=\left(\frac{\Prob{\Omega}{\Gamma}}{\Prob{\Omega_t}{\Gamma_t}}\right)^{\frac{1}{N-1}}> 1
$$
and considering the dilated couple $(\tilde{\Omega}_t,\tilde{\Gamma}_t)\in \as(\R^N)$ with
$$
\tilde{\Omega}_t:=\eta(t)\Omega_t,\qquad\text{and}\qquad \tilde{\Gamma}_t:=\eta(t)\Gamma_t,
$$
the optimality of $(\Omega,\Gamma)$ and the admissibility of $(\tilde{\Omega}_t,\tilde{\Gamma}_t)$ for Problem \eqref{eq:pb-gen-gamma} yield 
\begin{equation}
\label{eq:lkb-dil-gen}
F_\gamma(\Omega,\Gamma)\le F_\gamma(\tilde{\Omega}_t,\tilde{\Gamma}_t).
\end{equation}
Since $\tilde \lambda_{h,\beta}(\Om_t,\Gamma_t)\to \tilde \lambda_{h,\beta}(\Om,\Gamma)$  for every $h\ge 1$ and $\eta(t)\to 1$ as $t\to +\infty$, we can assume that there exists $t_0>0$ such that for every $t\ge t_0$ there exist indexes $k\in \{1,\dots,k_\ell\}$, $i\in \{1,\dots,k_{\ell,1}\}$, and a constant $\delta>0$ such that
\begin{equation}
\label{eq:mu-min}
[\mu(\Omega,\Gamma)]_{k}<[\mu(\tilde{\Omega}_t,\tilde{\Gamma}_t)]_{k}=\tilde \lambda_{i,\beta}(\tilde{\Omega}_t,\tilde{\Gamma}_t)\le \tilde \lambda_{i,\beta}(\Omega_t,\Gamma_t)<[\mu(\Omega,\Gamma)]_{k+1}-\delta,
\end{equation}
where we mean $[\mu(\Omega,\Gamma)]_{k_\ell+1}:=[\mu(\Omega,\Gamma)]_{k_\ell}+1$ for $k=k_\ell$. Indeed, if this is not the case $(\tilde{\Omega}_t,\tilde{\Gamma}_t)$ would be a minimizer bounded in the positive direction $x_1$. Let $E_{k,i}$ be the set of those $t$ satisfying \eqref{eq:mu-min}: the sets $E_{k,i}$ cover $[t_0,+\infty[$ and are in a finite number. 
\par
From \eqref{eq:lkb-dil-gen} we infer
$$
F_\gamma(\Om_t,\Gamma_t)-F_\gamma(\tilde \Om_t,\tilde \Gamma_t)\le F_\gamma(\Om_t,\Gamma_t)-F_\gamma(\Om,\Gamma).
$$
In view of Lemma \ref{lem:cut}, and of the monotonicity and Lipschitz continuity of $f$, we have
$$
F_\gamma(\Om_t,\Gamma_t)-F_\gamma(\Om,\Gamma)\le C_1 \hn(\Om \cap \{x_1=t\})
$$
for some $C_1>0$ independent of $t$.
Moreover, if $t\in E_{k,i}$ the Lipschitz estimate from below given by property (f2) and \eqref{eq:mu-min} entail
$$
F_\gamma(\Om_t,\Gamma_t)-F_\gamma(\tilde \Om_t,\tilde \Gamma_t)\ge C [\tilde \lambda_{i,\beta}(\Om_t,\Gamma_t)-\tilde \lambda_{i,\beta}(\tilde \Om_t,\tilde \Gamma_t)],
$$
so that we get thanks to Remark \ref{rem:scaling}
\begin{multline*}
\tilde \lambda_{i,\beta}(\Om_t,\Gamma_t)\le \tilde \lambda_{i,\beta}(\tilde \Om_t,\tilde \Gamma_t)+C_2 \hn(\Om\cap \{x_1<t\})\\
\le \frac{1}{\eta(t)}\tilde \lambda_{i,\beta}(\Om_t,\Gamma_t)+C_2 \hn(\Om\cap \{x_1<t\}),
\end{multline*}
where $C_2:=C_1C^{-1}$.
Since $\tilde \lambda_{i,\beta}(\Om_t,\Gamma_t)\ge C_3$ on $[t_0,+\infty[$ for every $i=1,\dots,k_{\ell,1}$, we conclude easily that for a.e. $t\in [t_0,+\infty[$
the following inequality is satisfied
$$
\hn(\partial^*(\Omega \setminus \Om_t))\le C_4\hn(\Omega\cap\{x_1=t\}).
$$
where $C_4>0$. Following the arguments of Step 2 in the proof of Lemma \ref{lem:cut}, we infer that $\Om$ is bounded in the positive direction $x_1$, a contradiction, and the step is concluded.

\vskip10pt\noindent{\bf Step 2.}
Let us prove existence of bounded minimizers proceeding by induction on the order $k_\ell$ of the highest eigenvalue involved. 
\par
For $k_\ell=1$, since $f$ is increasing and taking into account Remark \ref{rem:1ball}, minimizers are balls of perimeter $p$.
Let us now assume that a bounded minimizer exists for functionals with $k_\ell\le k$, and let us prove it for $k_\ell\le k+1$. 
\par
Let $(\Omega_n,\Gamma_n)_{n\in\N}$ be a minimizing sequence for problem \eqref{eq:pb-gen-gamma}. We can assume that 
$$
\Prob{\Omega_n}{\Gamma_n}\to \bar p,
$$
where $\bar p>0$ is the minimal  value which can be achieved in the limit by the generalized perimeters of a minimizing sequence. The fact that $\bar p>0$ is due to the fact that otherwise we would have $|\Om_n| \to 0$, and then $F_\gamma(\Om_n,\Gamma_n)\to +\infty$ thanks to property $(f1)$. We can assume moreover
$$
\lim_{n\to+\infty}|\Omega_n|=m,\quad \text{with $0<m<+\infty$}.
$$
\par
Let us apply, as in the proof of Theorem \ref{th:main1}, a concentration-compactness argument to the sequence $(1_{\Omega_n})_{n\in\N}$. 
The vanishing case, as above, cannot occur. If compactness holds true, thanks to the monotonicity of the function $f$ 
we obtain the existence of a minimizer $(\Om,\Gamma)\in \as(\R^N)$: in view of Step 1, we can assume that it is also bounded.
\par
In order to get the conclusion, we need thus to deal with the dichotomy case. Following the arguments in the proof of  Theorem \ref{th:main1}, and using the Lipschitz continuity of $f$, we come up easily with a minimizing sequence of the form 
$$
(\Om_{n,1}\cup \Om_{n,2},\Gamma_{n,1}\cup \Gamma_{n,2})_{n\in\N},
$$
where $\Om_{n,1}$ and $\Om_{n,2}$ are well separated with
$$
\Prob{\Omega_{n,1}}{\Gamma_{n,1}}=\bar p_1\qquad \text{and}\qquad
\Prob{\Omega_{n,2}}{\Gamma_{n,2}}=\bar p_2,
$$
$\bar p_1,\bar p_2>0$ and $\bar p_1+\bar p_2=\bar p$. We know that the spectrum of $(\Omega_{n,1}\cup\Omega_{n,2} \Gamma_{n,1}\cup\Gamma_{n,2})$
is given by the union of those of $(\Omega_{n,1},\Gamma_{n,1})$ and $(\Omega_{n,2},\Gamma_{n,2})$ by the formula
$$
\tilde \lambda_{k,\beta}(\Omega_{n,1}\cup\Omega_{n,2},\Gamma_{n,1}\cup\Gamma_{n,2})=
\min_{i=0,\ldots,k}\max\left\{\tilde\lambda_{i,\beta}(\Omega_{n,1},\Gamma_{n,1}),\tilde\lambda_{k-i,\beta}(\Omega_{n,2},\Gamma_{n,2})\right\}.
$$
We observe that the eigenvalues appearing in the computation of $F_\gamma(\Omega_{n,1}\cup\Omega_{n,2},\Gamma_{n,1}\cup\Gamma_{n,2})$ involve both
$(\Omega_{n,1},\Gamma_{n,1})$ and $(\Omega_{n,2},\Gamma_{n,2})$: otherwise, if for example only those of $(\Omega_{n,1},\Gamma_{n,1})$ were involved, then $(\Omega_{n,1},\Gamma_{n,1})$ would be a minimizing sequence with a perimeter below the minimal threshold $\bar p$, which is impossible. As a consequence, up to a subsequence, we may assume that the computation $F_\gamma(\Omega_{n,1}\cup\Omega_{n,2},\Gamma_{n,1}\cup\Gamma_{n,2})$ involves the ordered $k_\ell$-tuple
$$
(\{\tilde \lambda_{i,\beta}(\Omega_{n,1},\Gamma_{n,1})\}_{i=1,\dots, k^1_{\ell,1}}, \{\tilde \lambda_{j,\beta}(\Omega_{n,2},\Gamma_{n,2})\}_{j=1,\dots, k^2_{\ell,1}}, \{\gamma_h\}_{h=1,\dots, k_{\ell,2}})^{ord}
$$
where $k^1_{\ell,1},k^2_{\ell,1}>0$ are independent of $n$ with $k^1_{\ell,1}+k^2_{\ell,1}=k_{\ell,1}$.
\par
Thanks to assumption $(f1)$, we can assume that
$$
\tilde \lambda_{i,\beta}(\Omega_{n,1},\Gamma_{n,1}) \to \delta_i>0\qquad\text{and}\qquad \tilde \lambda_{j,\beta}(\Omega_{n,2},\Gamma_{n,2}) \to \eta_j>0
$$
for every $i=1,\dots, k^1_{\ell,1}$ and $j=1,\dots,k^2_{\ell,1}$.
\par
Let $(A^*,\Gamma^*)$ be a bounded minimizer of the functional
$$
(A,\Gamma)\mapsto f((\{\tilde \lambda_{i,\beta}(A,\Gamma)\}_{i=1,\dots, k^1_{\ell,1}}, 
\{\eta_j\}_{j=1,\dots,k^2_{\ell,1}},\{\gamma_h\}_{h=1,\dots, k_{\ell,2}})^{ord})
$$
under the perimeter constraint $\bar p_1$, and let $(B^*,K^*)$  be a bounded minimizer of the functional
$$
(B,K)\mapsto f((\{\delta^*_i\}_{i=1,\dots,k^1_{\ell,1}}, \{\tilde \lambda_{j,\beta}(B,K)\}_{j=1, \dots, k^2_{\ell,1}},\{\gamma_h\}_{h=1,\dots, k_{\ell,2}})^{ord})
$$
under the perimeter constraint $\bar p_2$, where
$$
\delta^*_i:=\tilde \lambda_{i,\beta}(A^*,\Gamma^*).
$$
The existence of $(A^*,\Gamma^*)$ and $(B^*,K^*)$ is guaranteed by the induction step. It turns out that we can put the two configurations at a positive distance in order to create $(A^*\cup B^*, \Gamma^*\cup K^*)\in \as(\R^N)$ with generalized perimeter equal to $\bar p$. We get easily that $(A^*\cup B^*, \Gamma^*\cup K^*)$ is a minimizer of the problem. Indeed using the Lipschitz continuity of $f$
\begin{multline*}
\liminf_{n\to +\infty} F_\gamma(\Om_n,\Gamma_n)=\liminf_{n\to +\infty}F_\gamma(\Om_{n,1}\cup \Om_{n,2},\Gamma_{n,1}\cup \Gamma_{n,2})\\
=\liminf_{n\to +\infty}f((\{\tilde \lambda_i(\Omega_{n,1},\Gamma_{n,1})\}, \{\tilde \lambda_j(\Omega_{n,2},\Gamma_{n,2})\},\{\gamma_h\})^{ord})\\
=\liminf_{n\to +\infty}f((\{\tilde \lambda_i(\Omega_{n,1},\Gamma_{n,1})\}, \{\eta_j\}, \{\gamma_h\})^{ord})\ge
f((\{\delta^*_i\}, \{\eta_j\}, \{\gamma_h\})^{ord})\\
=
\liminf_{n\to+\infty}f((\{\delta^*_i\}, \{\tilde \lambda_{j,\beta}(\Om_{n,2},\Gamma_{n,2})\}, \{\gamma_h\})^{ord})
\\
\ge f((\{\tilde \lambda_{i,\beta}(A^*,\Gamma^*)\},\{\tilde \lambda_{j,\beta}(B^*,K^*)\}, \{\gamma_h\})^{ord})\ge F_\gamma(A^*\cup B^*,\Gamma^*\cup K^*),
\end{multline*}
the last inequality coming from the monotonicity of $f$ and the fact that, as ordered $k_\ell$-tuples,
\begin{multline*}
(\{\tilde \lambda_{k,\beta}(A^*\cup B^*,\Gamma^*\cup K^*\}_{k=1,\dots, k_{\ell,1}}, \{\gamma_h\}_{h=1,\dots, k_{\ell,2}})^{ord}\\
\le (\{\tilde \lambda_{i,\beta}(A^*,\Gamma^*\}_{i=1,\dots, k^1_{\ell,1}}, \{\tilde \lambda_{j,\beta}(B^*,K^*)\}_{j=1,\dots, k^2_{\ell,1}}, \{\gamma_h\}_{h=1,\dots, k_{\ell,2}})^{ord}.
\end{multline*}
\end{proof}

We are now ready to prove Theorem \ref{thm:main3}.

\begin{proof}[\bf Proof of Theorem \ref{thm:main3}]
The existence of bounded minimizers follow from the more general result given by Theorem \ref{thm:main3-gen}. The density issue concerning Lipschitz domains follows from the monotonicity of $f$ together with Theorem \ref{thm:den} and Remark \ref{rem:approx-lek}. 
\par
In order to conclude, we need to show that every minimizer $(\Om,\Gamma)$ is bounded. Indeed, following the arguments of Step 1 in the proof of Theorem \ref{thm:main3-gen}, since no $\gamma$ is involved, the strict monotonicity of $f$ entails that the key inequality \eqref{eq:mu-min} is always satisfied, which  yields the boundedness of $\Om$.
\end{proof}

\begin{remark}
\label{rem:pen-proof}
{\rm
The result for the perimeter penalized version of the problem given in Remark \ref{rem:penal} follows easily by slightly modifying the arguments of the proof of Theorem \ref{thm:main3-gen} and dealing with 
$$
(\Om,\Gamma)\mapsto F_\gamma(\Om,\Gamma)+\Lambda \Prob{\Om}{\Gamma}.
$$
Boundedness of {\it every} minimizer follows from the fact that minimality entails
$$
\Lambda \left[\Prob{\Om}{\Gamma}-\Prob{\Om_t}{\Gamma_t}\right] \le F(\Om_t,\Gamma_t)-F(\Om,\Gamma)\le C \hn(\Om\cap \{x_1=t\})
$$
which yields 
$$
\hn(\partial^*(\Omega \setminus \Om_t))\le C_1\hn(\Omega\cap\{x_1=t\})
$$
for a.e. $t$ large enough, from which boundedness follows.
\par
As far as existence is concerned, only the dichotomy case needs some small modifications: in particular the configurations $(A^*,\Gamma^*)$ and $(B^*,K^*)$ constructed by considering the associated problems with {\it fixed} perimeters $\bar p_1$ and $\bar p_2$ are still sufficient to get the conclusion.
}
\end{remark}

\paragraph*{\bf Acknowledgements}  
The authors S.C and A.G. have been supported in their work, respectively, by the National Research Projects ``Elliptic and parabolic problems, heat kernel estimates and spectral theory'' (PRIN 20223L2NWK) and ``Variational methods for stationary and evolution problems with singularities and interfaces'' (PRIN 2022J4FYNJ), funded by the Italian Ministry of University and Research. Both authors are members of the Gruppo Nazionale per l’Analisi Matematica, la Probabilit\`a e le loro Applicazioni (GNAMPA) of the Istituto Nazionale di Alta Matematica (INdAM). S.C. acknowledges the support of the INdAM - GNAMPA 2023 Project ``Problemi variazionali per funzionali e operatori non-locali''.
\par
This manuscript has no associated data.

\bibliographystyle{plain}
\bibliography{agbiblio.bib}

\begin{thebibliography}{10}

\bibitem{AFP}
L.~Ambrosio, N.~Fusco, and D.~Pallara.
\newblock {\em Functions of bounded variation and free discontinuity problems}.
\newblock Oxford Mathematical Monographs. The Clarendon Press Oxford University
  Press, New York, 2000.

\bibitem{Bossel1}
M.-H. Bossel.
\newblock Membranes \'elastiquement li\'ees: extension du th\'eor\`eme de
  {R}ayleigh-{F}aber-{K}rahn et de l'in\'egalit\'e de {C}heeger.
\newblock {\em C. R. Acad. Sci. Paris S\'er. I Math.}, 302(1):47--50, 1986.

\bibitem{BucGiac}
D.~Bucur and A.~Giacomini.
\newblock A variational approach to the isoperimetric inequality for the
  {R}obin eigenvalue problem.
\newblock {\em Arch. Ration. Mech. Anal.}, 198(3):927--961, 2010.

\bibitem{Buc-k}
Dorin Bucur.
\newblock Minimization of the {$k$}-th eigenvalue of the {D}irichlet
  {L}aplacian.
\newblock {\em Arch. Ration. Mech. Anal.}, 206(3):1073--1083, 2012.

\bibitem{Bucur-Cito}
Dorin Bucur and Simone Cito.
\newblock Geometric control of the robin laplacian eigenvalues: the case of
  negative boundary parameter.
\newblock {\em The Journal of Geometric Analysis}, 30(4):4356--4385, 2020.

\bibitem{bucur2019sharp}
Dorin Bucur, Vincenzo Ferone, Carlo Nitsch, and Cristina Trombetti.
\newblock A sharp estimate for the first robin--laplacian eigenvalue with
  negative boundary parameter.
\newblock {\em Rendiconti Lincei}, 30(4):665--676, 2019.

\bibitem{bugi2015}
Dorin Bucur and Alessandro Giacomini.
\newblock Faber-{K}rahn inequalities for the {R}obin-{L}aplacian: a free
  discontinuity approach.
\newblock {\em Arch. Ration. Mech. Anal.}, 218(2):757--824, 2015.

\bibitem{BuGiVenant}
Dorin Bucur and Alessandro Giacomini.
\newblock The {S}aint-{V}enant inequality for the {L}aplace operator with
  {R}obin boundary conditions.
\newblock {\em Milan J. Math.}, 83(2):327--343, 2015.

\bibitem{BucGiac-lambdak}
Dorin Bucur and Alessandro Giacomini.
\newblock Minimization of the {$k$}-th eigenvalue of the {R}obin-{L}aplacian.
\newblock {\em J. Funct. Anal.}, 277(3):643--687, 2019.

\bibitem{BGT2019}
Dorin Bucur, Alessandro Giacomini, and Paola Trebeschi.
\newblock Best constant in {P}oincar\'{e} inequalities with traces: a free
  discontinuity approach.
\newblock {\em Ann. Inst. H. Poincar\'{e} C Anal. Non Lin\'{e}aire},
  36(7):1959--1986, 2019.

\bibitem{bgt2}
Dorin Bucur, Alessandro Giacomini, and Paola Trebeschi.
\newblock Stability results for the {R}obin-{L}aplacian on nonsmooth domains.
\newblock {\em SIAM J. Math. Anal.}, 54(4):4591--4624, 2022.

\bibitem{Ce02}
Rapha\"{e}l Cerf.
\newblock The {H}ausdorff lower semicontinuous envelope of the length in the
  plane.
\newblock {\em Ann. Sc. Norm. Super. Pisa Cl. Sci. (5)}, 1(1):33--71, 2002.

\bibitem{cit18}
Simone Cito.
\newblock Existence and regularity of optimal convex shapes for functionals
  involving the {R}obin eigenvalues.
\newblock {\em J. Convex Anal.}, 26(3):925--943, 2019.

\bibitem{CoTo}
Giovanni~E. Comi and Monica Torres.
\newblock One-sided approximation of sets of finite perimeter.
\newblock {\em Atti Accad. Naz. Lincei Rend. Lincei Mat. Appl.},
  28(1):181--190, 2017.

\bibitem{CT}
Guido Cortesani and Rodica Toader.
\newblock A density result in {SBV} with respect to non-isotropic energies.
\newblock {\em Nonlinear Anal.}, 38(5, Ser. B: Real World Appl.):585--604,
  1999.

\bibitem{DMFT}
Gianni Dal~Maso, Gilles~A. Francfort, and Rodica Toader.
\newblock Quasistatic crack growth in nonlinear elasticity.
\newblock {\em Arch. Ration. Mech. Anal.}, 176(2):165--225, 2005.

\bibitem{Da}
D.~Daners.
\newblock A {F}aber-{K}rahn inequality for {R}obin problems in any space
  dimension.
\newblock {\em Math. Ann.}, 335(4):767--785, 2006.

\bibitem{de2014existence}
Guido De~Philippis and Bozhidar Velichkov.
\newblock Existence and regularity of minimizers for some spectral functionals
  with perimeter constraint.
\newblock {\em Applied Mathematics \& Optimization}, 69(2):199--231, 2014.

\bibitem{EvansGariepy}
L.~C. Evans and R.~F. Gariepy.
\newblock {\em Measure theory and fine properties of functions}.
\newblock Studies in Advanced Mathematics. CRC Press, Boca Raton, FL, 1992.

\bibitem{Henrot}
Antoine Henrot.
\newblock {\em Shape optimization and spectral theory}.
\newblock De Gruyter, 2017.

\bibitem{HZ19}
Antoine Henrot and Davide Zucco.
\newblock Optimizing the first {D}irichlet eigenvalue of the {L}aplacian with
  an obstacle.
\newblock {\em Ann. Sc. Norm. Super. Pisa Cl. Sci. (5)}, 19(4):1535--1559,
  2019.

\bibitem{Kennedy}
James Kennedy.
\newblock An isoperimetric inequality for the second eigenvalue of the
  {L}aplacian with {R}obin boundary conditions.
\newblock {\em Proc. Amer. Math. Soc.}, 137(2):627--633, 2009.

\bibitem{Na}
Micka\"{e}l Nahon.
\newblock Existence and regularity of optimal shapes for spectral functionals
  with {R}obin boundary conditions.
\newblock {\em J. Differential Equations}, 335:69--102, 2022.

\end{thebibliography}
\end{document}